\newcommand\mlnode[1]{\begin{tabular}{@{}c@{}}#1\end{tabular}}
\newcommand{\aut}{\mathfrak{aut}}
\newcommand{\hol}{\mathfrak{hol}}
\newcommand{\C}{\mathbb{C}}
\newcommand{\CN}{\mathbb{C}^N}
\newcommand{\CNp}{\mathbb{C}^{N^\prime}}
\newcommand{\R}{\mathbb{R}}
\newcommand{\N}{\mathbb{N}}
\newcommand{\Z}{\mathbb{Z}}
\newcommand{\dop}[1]{\frac{\partial}{\partial #1}}
\newcommand{\holmaps}{\mathcal{H}}
\newcommand{\todo}[1]{}
\newcommand{\Sphere}[1]{\mathbb S^{#1}}
\newcommand{\parcur}[1]{\mathfrak P^{#1}}
\newcommand{\tz}{\mathbf t_0}
\newcommand{\p}{\mathbf p}
\newlength{\extendaxesby}\setlength{\extendaxesby}{.4cm}
\DeclareMathOperator{\id}{id}
\DeclareMathOperator{\rank}{rk}
\DeclareMathOperator{\real}{Re}
\DeclareMathOperator{\Aut}{Aut}
 \newtheorem{thm}{Theorem}
\newtheorem{theorem}[thm]{Theorem}
\newtheorem{lem}[thm]{Lemma}
\newtheorem{lemma}[thm]{Lemma}
\newtheorem{proposition}[thm]{Proposition}
\newtheorem{cor}[thm]{Corollary}
\theoremstyle{definition}
\newtheorem{definition}[thm]{Definition}
\newtheorem{exa}{Example}
\newtheorem{example}[exa]{Example}
\newtheorem{remark}[thm]{Remark}
\title[Local rigidity and higher order infinitesimal deformations]{Sufficient and necessary conditions \\ for local rigidity of CR mappings \\ and higher order infinitesimal deformations}
\begin{document}

\author{Giuseppe della Sala}
\address{Department of Mathematics, American University of Beirut}
\email{gd16@aub.edu.lb}
\author{Bernhard Lamel}
\address{Fakult\"at f\"ur Mathematik, Universit\"at Wien}
\email{bernhard.lamel@univie.ac.at}
\author{Michael Reiter}
\address{Fakult\"at f\"ur Mathematik, Universit\"at Wien}
\email{m.reiter@univie.ac.at}

\begin{abstract} In this paper we continue our study of local rigidity for maps of CR submanifolds of the complex space. We provide a linear sufficient condition for local rigidity of finitely nondegenerate maps between minimal CR manifolds. Furthermore, we show higher order infinitesimal conditions can be used to give a characterization of local rigidity. 
\end{abstract}

\maketitle

\section{Introduction}

Let $M\subset \CN$ and $M'\subset \CNp$ be real submanifolds, and consider 
the set $\mathcal H(M,M')$ of holomorphic mappings $H$ defined in a neighborhood of $M$ in $\C^N$ and valued in $\C^{N'}$ satisfying $H(M) \subset M'$. The (holomorphic) automorphism groups of $M$ and $M'$ act on $\mathcal H (M,M')$ in the natural way, leading
to an action of $G = \Aut(M) \times \Aut(M')$ defined for $(\varphi, \psi) \in G$ by 
\[ (\varphi, \psi) \cdot H = \psi \circ H \circ \varphi^{-1}. \] 
A particularly interesting question about this action is under which conditions 
the image of a map $H\colon M \to M'$ in the quotient 
space is {\em isolated}; in this case we say that the map $H$ is {\em locally rigid}. The same setup applies in 
the local setting  
to germs of manifolds $(M,p) \subset (\CN,p)$, $(M',p') \subset (\CNp,p')$, and their automorphism groups $\Aut(M,p)$ and
$\Aut(M',p')$, respectively. 
%Should we say rigid here? and locally rigid for germs? 
A typical 
example of maps which are totally rigid are maps of hyperquadrics of {\em positive} signature, 
as in the ``super-rigidity'' encountered by Baouendi, Ebenfelt and Huang \cite{BEH08}; while typical 
examples of maps which are not totally rigid are maps of spheres with big codimension, such as the so-called ``D'Angelo family'' (see the survey of Huang--Ji \cite{HJ07}) connecting the flat and the Whitney map $S^3\to S^7$, given by $(z,w) \mapsto (z, \sin(\theta) w, \cos(\theta)zw, \cos(\theta) w^2)$ for $\theta\in [0,\pi/2]$.  

We have started an  investigation of $\mathcal H ((M,p),(M',p'))$ for general germs
 $(M,p)$ and $(M',p')$  in previous articles \cite{dSLR15a,dSLR15b}.  Local rigidity, both in the global and local CR settings, is again going to be the focus of this paper; our inspiration here draws mainly from 
the analogous notion of stability of smooth maps of smooth manifolds as introduced by J. Mather in a series of papers starting with \cite{Mather68,Mather69} in the framework of singularity theory. Mather concluded that stability is equivalent to 
its linearized notion, called infinitesimal stability. In our previous work we have given linear obstructions to local rigidity similar to those of Mather. However, we also showed that the notions of {\em infinitesimal rigidity} and 
local rigidity  are {\em not equivalent} (see \cite{dSLR15a}) in the local CR setting. In order
to overcome this problem, the present paper introduces
 higher order infinitesimal deformations (for more details see \cref{section:introhigherorderinfdef} below),
 which will be shown to encode necessary and sufficient infinitesimal conditions for local rigidity. In addition
 to this, we improve the sufficient conditions given in \cite{dSLR15a, dSLR15b}.  Namely, our new approach (exploiting the analytic structure in a deeper way) allows us to avoid studying the topological properties of the group action of automorphisms and thus to prove our results under more general conditions. 

One of the main technical tools dealing with maps of real-analytic CR manifolds and infinitesimal deformations is provided by jet parametrization results. Such results were obtained for many classes of manifolds and maps in the literature (\cite{BER97,Zaitsev97, BER99, KZ05, BRWZ04, LM07, LMZ08, JL13a, JL13b}) and can be of a very technical nature. In our work we will define the notion of a class of maps which satisfy the jet parametrization property, which we will take  for granted in many results, and also  provide an example of an interesting large class of mappings for which it is satisfied (for more details see \cref{sec:jetparam}).

Our first main result is a sufficient condition for local rigidity in terms of infinitesimal deformations. Given $H \in \mathcal H(M,M')$ a holomorphic vector field  $V$ of the 
form 
\[ V = \sum_{j=1}^{N'} V_j (Z) \dop{Z_j'}\biggr|_{H(Z)} \] 
is called an \textit{infinitesimal deformation of $H$} if $\real V$ is tangent to $M'$ along $H(M)$, i.e. if for any 
point $p\in M$ and 
any real valued real-analytic function $\varrho (Z',\bar Z')$ defined near  $H(p)\in M'$ and vanishing on $M'$ satisfies 
\[ (\real V \varrho) (Z) := \sum_{j=1}^{N'} \real V_j (Z) \varrho_{Z_j'} (H(Z), \overline{H(Z)}) = 0, \quad Z\in M\cap U,  \]
for some open neighbourhood $U$ of $p$.  We denote the collection of all infinitesimal deformations of $H$ by $\hol(H)$. For the special case that we are dealing with the identity map in $\CN$ or $\CNp$, respectively, we use the classical notation $\hol(M):= \hol(\id_M)$ 
and $\hol(M'):=\hol(\id_{M'})$. It turns out that if 
 $\Aut(M)$ and $\Aut(M')$ are Lie groups, then $\hol(M)$ and $\hol(M')$ are their respective Lie algebras.
We also define $\aut(H) = H_*(\hol(M))+\hol(M')|_H$ to be the subspace of $\hol(H)$ of \textit{trivial infinitesimal deformations of $H$}; it can be identified with the Lie algebra of the orbit $G \cdot H$.

\begin{theorem}
	\label{t:rigidmain}
	Let $M \subset \C^N$ and $M' \subset \C^{N'}$ be real-analytic submanifolds. Assume $G$ is a Lie group and let $\mathcal F \subset \mathcal H (M,M')$ be a class of holomorphic mappings satisfying the jet parametrization property. If $H: M \rightarrow M'$,  $H \in \mathcal F$, satisfies $\hol(H) = \aut(H)$, then $H$ is locally rigid.
\end{theorem}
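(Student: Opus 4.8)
The plan is to use the jet parametrization property to replace the infinite-dimensional space $\mathcal H(M,M')$ by a finite-dimensional real-analytic variety, and then to run a dimension count comparing the Zariski tangent space of this variety with the tangent space of the $G$-orbit of $H$. First I would fix a point $p\in M$ and apply the jet parametrization property to $\mathcal F$ at $H$: this supplies an integer $k$ and a real-analytic map $\Psi$, defined near $\sigma_0 := \jetm{p}{k}H$ in the $k$-jet space $\njetsp{p}{k}$, such that every $\tilde H\in\mathcal F$ close to $H$ is recovered from its $k$-jet via $\tilde H = \Psi(\jetm{p}{k}\tilde H)$. Consequently the maps in $\mathcal F$ near $H$ are identified with the points of the real-analytic set $\Sigma := \{\sigma : \Psi(\sigma)(M)\subseteq M'\}$, and local rigidity of $H$ becomes the statement that the orbit $G\cdot H$ is isolated inside $\Sigma$ near $\sigma_0$, i.e.\ that $\sigma_0$ has a neighbourhood in $\Sigma$ contained in the jet image of $G\cdot H$.

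Since $G$ is assumed to be a Lie group, the orbit map $g\mapsto \jetm{p}{k}(g\cdot H)$ is real-analytic, so the orbit $O := \jetm{p}{k}(G\cdot H)\subseteq\Sigma$ is an immersed real-analytic submanifold whose tangent space at $\sigma_0$ is exactly $\aut(H)$, by the very definition of $\aut(H)$ as the Lie algebra of $G\cdot H$. In particular $\aut(H)\subseteq T_{\sigma_0}\Sigma$ and $\dim O = \dim\aut(H)$.

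The technical heart, and the step I expect to be the main obstacle, is to control the Zariski tangent space $T_{\sigma_0}\Sigma$ by showing that the assignment $\dot\sigma\mapsto V_{\dot\sigma}$, sending a tangent vector to the holomorphic vector field $V_{\dot\sigma}$ along $H$ obtained by differentiating $t\mapsto\Psi(\sigma_0 + t\dot\sigma)$ at $t=0$, defines an injection $T_{\sigma_0}\Sigma\hookrightarrow\hol(H)$. Injectivity follows because $\Psi$ recovers the full map from its $k$-jet, so a variation with $V_{\dot\sigma}=0$ forces $\dot\sigma=0$; and the fact that the image lands in $\hol(H)$ is the content of linearizing the defining condition $\Psi(\sigma)(M)\subseteq M'$ at $\sigma_0$, whose derivative along $\dot\sigma$ is precisely the requirement that $\real V_{\dot\sigma}$ be tangent to $M'$ along $H(M)$. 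Making this linearization rigorous, by differentiating the real-analytic equations cutting out $\Sigma$ and matching them term by term with the infinitesimal deformation equation, is where the real work lies and where the precise form of the jet parametrization is used.

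Granting this, the proof closes by a dimension count. We have $\aut(H)\subseteq T_{\sigma_0}\Sigma$ and, by the injection just described together with the hypothesis $\hol(H)=\aut(H)$, $\dim T_{\sigma_0}\Sigma\le\dim\hol(H)=\dim\aut(H)$; hence $T_{\sigma_0}\Sigma=\aut(H)$ and $d:=\dim T_{\sigma_0}\Sigma=\dim O$. Writing $\Sigma=\{f_1=\cdots=f_m=0\}$ locally, the rank of $df$ at $\sigma_0$ equals $\dim\njetsp{p}{k}-d$, so choosing $\dim\njetsp{p}{k}-d$ of the $f_i$ with independent differentials at $\sigma_0$ and applying the implicit function theorem yields a $d$-dimensional real-analytic manifold $W$ with $\Sigma\subseteq W$ and $T_{\sigma_0}W=T_{\sigma_0}\Sigma$ near $\sigma_0$. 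Then $O\subseteq\Sigma\subseteq W$ are nested, $O$ and $W$ are both $d$-dimensional with the same tangent space at $\sigma_0$, and $O$ is locally embedded; hence $O=W=\Sigma$ in a neighbourhood of $\sigma_0$. This exhibits the orbit as isolated in $\Sigma$, proving that $H$ is locally rigid.
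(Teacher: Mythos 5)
Your overall strategy --- reduce to a finite-dimensional real-analytic set via the jet parametrization and compare dimensions with the orbit --- is the same as the paper's, but the step you yourself single out as the technical heart contains a genuine gap, and it is exactly the point where the paper is forced to take a different route. You claim that $\dot\sigma\mapsto V_{\dot\sigma}=d\Psi(\sigma_0)\dot\sigma$ is injective on the Zariski tangent space $T_{\sigma_0}\Sigma$ ``because $\Psi$ recovers the full map from its $k$-jet.'' But the reproducing property $j^{k}_{p}\Psi(\sigma)=\sigma$ is only guaranteed for $\sigma\in\Sigma$, i.e.\ for jets of actual maps in $\mathcal F$; it says nothing about $\Psi$ off $\Sigma$, so $d\bigl(j^{k}_{p}\circ\Psi\bigr)(\sigma_0)$ is the identity only on tangent vectors realized by curves lying \emph{inside} $\Sigma$, not on the full kernel of the differentials of the defining equations. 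Indeed, the paper concedes in the proof of \cref{l:dimension} that injectivity of the differential of the parametrization $\Phi$ is only established at a generic point $\tilde\Lambda$ of the orbit near $\Lambda_0$ (by the argument of Lemma 23 in \cite{dSLR15a}), not at $\Lambda_0$ itself. Any nonzero $\dot\sigma\in\ker d\Psi(\sigma_0)$ automatically lies in $T_{\sigma_0}\Sigma$ and maps to $V_{\dot\sigma}=0$, so your map can fail to be injective precisely when you need it, $\dim T_{\sigma_0}\Sigma$ may exceed $\dim\hol(H)$, and the manifold $W$ produced by the implicit function theorem can then be strictly larger than the orbit, destroying the sandwich $O\subseteq\Sigma\subseteq W$. (The other half of your key step is fine: linearizing the defining equations of $\Sigma$ in the direction $\dot\sigma$ does yield exactly the infinitesimal deformation equation, so the image does land in $\hol(H)$.)

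The paper avoids the Zariski tangent space altogether. It bounds $\dim A$ by bounding the dimension of every \emph{submanifold} of $A$ near $\Lambda_0$ (\cref{l:manifold}): tangent vectors of such a submanifold are tangent to genuine curves in $A$, hence to curves of maps in $\mathcal F$, whose $t$-derivatives are honest infinitesimal deformations, and the uniqueness statement in the infinitesimal jet parametrization makes the jet map injective on these. Together with $\dim(G'_{\p}\cdot\Lambda_0)=\dim\aut(H)$ this gives $\dim A=\dim\aut(H)$; the possibility that $\Lambda_0$ is a singular point of $A$ is then handled by the stratification of semi-analytic sets (the singular locus has strictly smaller dimension, so the orbit must meet $A^{\ell_0}_{reg}$), and local equality of $A$ with the orbit at such a regular point is transported back by $G$-equivariance. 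If you wish to keep a tangent-space computation, you would have to perform it at a generic orbit point where injectivity of $d\Phi$ can actually be proved --- an additional nontrivial input, not a consequence of the reproducing property --- and you would still need the finitely many base points $p_1,\dots,p_m$ of the GJPP rather than a single $p$, since one jet at one point does not determine a global map on $M$.
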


The analogous statement for germs of maps is also true,  with the same proof, and therefore 
improves the results in \cite{dSLR15b}. The proof is carried out through a careful analysis of an analytic set $A$, contained in an appropriate finite dimensional jet space, which parametrizes the maps in $\mathcal F$.

Comparing with \cite[Theorem 2]{dSLR15b} we have relaxed the assumptions on $M$ and $M'$ considerably. The stronger assumptions in our earlier paper allowed us to prove that the action of $G$ on the set of maps is free and proper, which was needed in the proof given in \cite{dSLR15b}, while in the approach of the present paper we are able to  avoid the reliance on the topological properties of $G$.

Also remark that \cref{t:rigidmain} provides a counterpart of \cite[Theorem 1]{Mather69} in the setting of CR manifolds. However, the infinitesimal stability is in fact a necessary and sufficient condition for stability (see \cite[Theorem 4.1]{Mather70}), while we stress again that the infinitesimal condition given in \cref{t:rigidmain} is only sufficient, but 
not necessary for local rigidity.

Our second main result adresses this problem and provides
 a characterization of local rigidity in terms of \textit{higher order infinitesimal deformations of $H$}. An infinitesimal deformation of order $k$ of $H$ is represented by a curve of maps in $\mathcal H(M,\CNp)$, which is tangent to $\mathcal H (M,M')$ up to order $k$ at $H$, or equivalently by holomorphic 
maps $F_1, \dots ,F_k \in \mathcal H(M, \CNp)$ with the property that for any real-valued real-analytic function
$\varrho$ defined in a neighborhood of a point in $M'$ and vanishing on $M'$ we have that 
\[ \varrho\left(H(Z) + \sum_{\ell=1}^k F_\ell(Z) t^\ell,\overline{H(Z)} + \sum_{\ell=1}^k \overline{F_\ell(Z)} t^\ell \right) = O (t^{k+1}), \quad Z\in M.  \] The set of infinitesimal deformations of order $k$ of $H$ is denoted by $\hol^k(H)$. The projection of $\hol^k(H)$ on the first $j$ components we denote by $\hol^k_j(H)$. The subset of $\hol^k(H)$ of the trivial $k$-th order infinitesimal deformations is denoted by $\aut^k(H)$ (for more details see \cref{section:introhigherorderinfdef}).

\begin{theorem}\label{th:morepreciselymain}
Let $M,M'$ and $\mathcal F$ be as in \cref{t:rigidmain}. Suppose that $G$ is a Lie group with finitely many connected components. Then for each $H_0\in \mathcal F$ 
there exists a neighbourhood ${\mathcal U}_{H_0}$ and a function $j\mapsto \ell(j)$ 
such that $H\in {\mathcal U}_{H_0}$ is locally rigid if and only if $\aut^j(H)=\hol^{\ell(j)}_j(H)$ for all $j\in \mathbb N$.
\end{theorem}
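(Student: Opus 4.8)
The plan is to transport the entire question into the finite-dimensional analytic geometry furnished by the jet parametrization property. Fix $H_0\in\mathcal F$, let $k_0$ be the jet order provided by that property, and set $a_0:=\jetm{p}{k_0}H_0$ in the ambient jet space $J$; the jet parametrization identifies a neighbourhood of $H_0$ in $\mathcal F$ with a neighbourhood of $a_0$ in an analytic subset $A\subseteq J$, equivariantly for the induced action of $G$ on $J$. For $H$ near $H_0$ write $a:=\jetm{p}{k_0}H\in A$ and let $O:=G\cdot a$ be its orbit, whose germ at $a$ is that of a smooth submanifold (here the hypothesis that $G$ has finitely many components enters, so that near $a$ the orbit is $G_0\cdot a$). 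As in the proof of \cref{t:rigidmain}, $H$ is locally rigid if and only if $O$ is a neighbourhood of $a$ in $A$.

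The key is a dictionary between higher order infinitesimal deformations and arcs. Since the jet map $H\mapsto \jetm{p}{k_0}H$ is linear, substituting the polynomial curve $H+\sum_\ell F_\ell t^\ell$ into the functions $\varrho$ and reading off jets identifies $\hol^k(H)$ with the fibre over $a$ of the $k$-th jet scheme of $A$, i.e. with the space $\Gamma_k(A):=\{\, (b_1,\dots,b_k) : a+\sum_{\ell=1}^k b_\ell t^\ell\in A\ (\mathrm{mod}\ t^{k+1})\,\}$ of $k$-jets of formal arcs of $A$ based at $a$; accordingly $\hol^k_j(H)$ is the image of the truncation $\Gamma_k(A)\to\Gamma_j(A)$. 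Because $O$ is smooth, the trivial deformations $\aut^j(H)$ are identified with the full arc space $\Gamma_j(O)$, and the truncations $\Gamma_k(O)\to\Gamma_j(O)$ are surjective.

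The analytic heart of the argument is Greenberg's theorem applied to the analytic set $A$ at $a$: there is a Greenberg function $j\mapsto\ell(j)$ such that a $j$-jet extends to an $\ell(j)$-jet in $A$ exactly when it is the $j$-jet of a genuine convergent arc contained in $A$. Hence $\hol^{\ell(j)}_j(H)=\operatorname{image}(\Gamma_{\ell(j)}(A)\to\Gamma_j(A))$ coincides with the set of $j$-jets of convergent arcs of $A$ through $a$. With this identification the asserted equalities $\aut^j(H)=\hol^{\ell(j)}_j(H)$ for all $j$ are equivalent to the statement that every convergent arc of $A$ based at $a$ is tangent to the smooth orbit $O$ to all orders. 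Since $A$ and $O$ are real-analytic, infinite order tangency forces such an arc to lie in $O$; conversely the curve selection lemma produces, through any point of $A$ near $a$, an analytic arc of $A$ based at $a$. Therefore the family of conditions holds for all $j$ if and only if every arc of $A$ through $a$ lies in $O$, which (using $O\subseteq A$) means that the germ of $A$ at $a$ equals the germ of $O$ — precisely local rigidity.

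It remains to make $\mathcal U_{H_0}$ and $\ell$ uniform, and I expect this to be the main obstacle. For a single $H$ both the arc-space dictionary and the Greenberg function are standard, but the statement demands one function $\ell(j)$ valid for all $H$ in a neighbourhood, i.e. as the base point $a$ ranges over a neighbourhood of $a_0$ in $A$. Here I would invoke the uniform (family) version of Greenberg's theorem together with the constructibility of the jet schemes of $A$ to bound $\ell(j)$ uniformly on a neighbourhood $\mathcal U_{H_0}$, while the finiteness of the number of components of $G$ guarantees that the orbit dimension is lower semicontinuous and that $\aut^j$ matches $\Gamma_j(O)$ uniformly there. Securing this semicontinuity of the Greenberg and dimension data across the analytic family $A$ is exactly where the analytic structure must be exploited most carefully.
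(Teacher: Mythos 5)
Your overall strategy coincides with the paper's: pass to a finite\-/dimensional real-analytic set $A$ in jet space via the GJPP, identify $\hol^k(H)$ with $k$-jets of arcs on $A$ based at $a=j^{\tz}_{\p}H$ and $\aut^k(H)$ with those jets arising from the $G$-action, define $\ell(j)$ by a strong approximation theorem applied to the defining system of $A$ (the paper uses Wavrik's theorem for the fixed system $\{c^k_i=0\}$ of \cref{def:jetparam}, which plays exactly the role of your Greenberg function, and is combined with Artin approximation in \cref{th:necessary}), get the ``only if'' direction from a curve selection lemma (\cref{lem:hironaka}, via Hironaka resolution), and get the ``if'' direction from infinite-order tangency of analytic arcs to the orbit (\cref{th:ifthereexists}). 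Your worry about the uniformity of $\ell(j)$ over ${\mathcal U}_{H_0}$ is resolved in the paper without a family version of the approximation theorem: the system $\{c^k_i\}$ cutting out $A$ is fixed on the whole neighbourhood $\mathcal N$ of $H_0$, so Wavrik's function depends only on that system and not on the base point of the arc.

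There is, however, one step that fails as written: the assertion that the germ of $O=G\cdot a$ at $a$ is smooth, ``so that near $a$ the orbit is $G_0\cdot a$.'' The orbit is only an immersed submanifold, and a priori its germ at $a$ as a subset of $J^{\tz}_{\p}$ is a finite union of embedded real-analytic pieces, one for each connected component of $G$ meeting $G'_{\p}$; this is precisely the content of \cref{lem:analyticOrbit}, which produces submanifolds $W_1,\dots,W_\ell$ rather than a single smooth germ. Your final implication --- ``every arc of $A$ at $a$ is tangent to $O$ to all orders, hence lies in $O$ by analyticity'' --- therefore does not go through directly, because $O$ need not be a real-analytic subset near $a$ and tangency to all orders is only known in the weak sense that for each $j$ the $j$-jet of the arc agrees with the $j$-jet of some arc in some piece $W_{k(j)}$. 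One must pigeonhole over the finitely many pieces to extract a single $W_{\tilde k}$ to which the arc is tangent to infinitely many (hence all) orders, and only then conclude containment in the embedded real-analytic submanifold $W_{\tilde k}\subset O$. This pigeonhole is exactly where the hypothesis that $G$ has finitely many connected components is used in the proof of \cref{th:ifthereexists}; with it your argument is repaired and matches the paper's.
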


The proof is achieved by extending the jet parametrization results to higher order infinitesimal deformations and making use of tools from real-analytic geometry. In particular, it relies heavily on  Artin's approximation theorem, especially in the strong approximation form first given by Wavrik. Approximation theorems and their uses in CR geometry 
are an interesting topic in itself; for a very good survey on the state-of-the-art in this area, we refer 
the reader to Mir's survey \cite{Mir14}.

The paper is organized as follows. In \cref{sec:preliminaries} we establish some of the main notation and recall some basic facts about maps of CR manifolds and the action of CR automorphisms. In \cref{sec:jetparam} we discuss one of the main technical tools of the paper, that is, a jet parametrization result for certain classes of maps of CR manifolds. Such results are well established in the literature, so we only  give more detailed proofs for less standard aspects of our formulation of the results. %In \cref{sec:jetparaminfdef} we provide a corresponding jet parametrization theorem for infinitesimal deformations.
\cref{sec:mainresults} is devoted to the proof of the main results. Finally, in \cref{sec:examples} we show how to apply the results in the paper to some concrete examples.
\\ 
\\ 
\section{Preliminaries}\label{sec:preliminaries}
Let $M\subset \mathbb C^{N}$, $M'\subset \mathbb C^{N'}$ be closed generic real submanifolds of class $C^\omega$ (see \cref{sec:CRgeometry} below).
%We will usually work with a fixed (local) defining equation $\rho'$ for $M'$ and (local) parametrization $Z(s)$ for $M$; however, it is straightforward to check (with standard arguments) that the various notions we will define do not depend on the choice of $\rho'$, $Z(s)$; see for instance \cref{rem:biholInvariance} below. 

\subsection{Spaces of maps and local rigidity}

 We will be interested in studying the set of real analytic CR maps from $M$ to $M'$, which will be denoted as $\mathcal H(M,M')$. Furthermore we denote by $\mathcal H(M,\C^{N'})$ the set of real analytic CR maps $M\to \C^{N'}$. Each element of $\holmaps = \mathcal H(M,\C^{N'})$ extends holomorphically to a neighborhood $U$ (depending on $H$) of $M$ in $\mathbb C^N$. In general, given a neighborhood $U$ of $M$ in $\mathbb C^N$, we denote by $\holmaps(U,\mathbb C^{N'})$ the space of holomorphic maps from $U$  to $\C^{N'}$. Thus, choosing a fundamental system of neighborhoods $U_n$ of $M$ in $\C^N$, we have $\mathcal H(M,\C^{N'})=\cup_{n\in \mathbb N} \mathcal H(U_n,\C^{N'})$; we usually endow  $\mathcal H(M,\C^{N'})$ with the natural inductive limit topology. If $M$ is compact, the resulting space is a (DFS) space.

We will denote by $\Aut(M)$ (respectively $\Aut(M')$) the group of CR automorphisms of $M$ ($M'$); furthermore we define $G=\Aut(M)\times\Aut(M')$. In the paper we will assume that $G$ is a Lie group (in particular, that it has finitely many connected components). The group $G$ acts on $\mathcal H(M, M')$ as follows: given $g=(\sigma, \sigma')\in G$ we define a map $\mathcal H(M, M') \to \mathcal H(M, M')$ by
\[\mathcal H(M, M') \ni H \to g\cdot H=\sigma' \circ H \circ \sigma^{-1} \in \mathcal H(M, M').\]
The set $\mathcal H(M,M')$ and the action of $G$ on it have been studied in several papers by many authors (\cite{Cartan32, CM74, Faran82, Webster79b, Huang99, Lebl11, HJ01, DAngelo88b, DAngelo91, EHZ04, EHZ05, BH05, DL16, DHX17, Reiter16b}), most notably in the case when $M$ and $M'$ are spheres. We recall from  \cite{dSLR15a, dSLR15b} the property of \emph{local rigidity},
 which roughly states that all the maps close enough to a given $ H\in \mathcal H(M,M')$ are equivalent to $H$ under the action of $G$.  

More precisely, the definition is as follows:
\begin{definition}\label{locrig}
Let $M\subset \mathbb C^{N}$, $M'\subset \mathbb C^{N'}$ be closed generic real submanifolds of class $C^\omega$, and let $H\in \mathcal H(M,M')$. 
We say that $H$ is \emph{locally rigid} if $H$ projects to an isolated point in the 
quotient $\faktor{\mathcal H(M,M')}{G}$.

Equivalently, $H$ is locally rigid  if and only if  there exists a neighborhood $\mathcal U$ of $H$ in $\mathcal H(M,\C^{N'})$ such that for every $\hat H\in \mathcal H(M,M')\cap \mathcal U$ there is $g\in G$ such that $\hat H=g\cdot H$ (cf.\ \cite{dSLR15a}, Remark 12).
\end{definition}

\subsection{Jet spaces}

In order to work in finite dimensional spaces, we will use jet parametrization results to identify maps with the collection of their derivatives at some point. To that aim it will be convenient to set up the appropriate notation.

We define  
\[J_0^k = \faktor{\mathbb C \{Z\}^{N'} }{\mathfrak{m}^{k+1}}, \]
where $\mathfrak{m}=(Z_1,\ldots,Z_N)$ is the maximal ideal, the space of the $k$-jets at $0$
of holomorphic maps $\mathbb C^N\to \mathbb C^{N'}$, with the 
natural projection $j_0^k$. Given $p\in \mathbb C^N$ it is straightforward to give an analogous definition for the space $J_p^k$ of jets  at $p$. For a given $k$, we will denote by $\Lambda$  the coordinates in $J_0^k$.
%More precisely we will write
%$\Lambda = (\Lambda_1,\Lambda_2,\Lambda_3)$ with $\Lambda_j=(\Lambda_j^{m,\ell})$, $0\leq m+\ell \leq k$, where
%\[ \Lambda = j_0^k H \text{ if  and only if } \Lambda_j^{m,\ell} = \frac{1}{m!\ell!} 
%\frac{\partial^{m+\ell} H_j}{\partial z^m \partial w^\ell} (0),  \]
%and 
%\[ \Lambda (z,w) =(\Lambda_1 (z,w),\Lambda_2 (z,w),\Lambda_3 (z,w))  \]
%with
%\[ \Lambda_j (z,w) = \sum_{m,\ell} \Lambda_j^{m,\ell} z^m w^\ell.\]

As in \cite{dSLR15a} it is possible in this setting to define an induced $\Aut_p(M) \times \Aut(M')$-action on the jet space $J_p^k$, where $\Aut_p(M)$ is the subgroup of elements of $\Aut(M)$ which fix $p \in M$ (i.e. $\Aut_p(M)$ is the stabilizer of $p \in M$).

To define an induced $G$-action on $J_p^k$ is decidedly more involved, and we will need to make use of the jet parametrization property;  we thus will only do this later in \cref{def:inducedAction}.

\subsection{Spaces of curves of maps}

Our main strategy for studying the set $\mathcal H(M,M')$ 
(and the action of $G$ on it) is to use appropriate parametrization results, which allow us 
to regard $\mathcal H(M,M')$ -- or a particular subset of it -- as a real-analytic subset $A$ of a finite dimensional space. The reduction to an analytic setting in turn leads us to the study of analytic curves on $A$, or tangent to it to high enough order, and allows to make use of the tools of real-analytic geometry. We will start by introducing convenient  notation for curves of maps.

For technical reasons we also need to introduce the following objects:

\begin{definition}
\label{def:PiTau}
For $H \in \mathcal H(M,M')$ and $\mathcal F$ a suitable space of maps (or jets) we define $\mathcal F_H\{t\}$ as the space of convergent power series in $t$ with coefficients in $\mathcal F$ with constant term $H$. Furthermore we denote by $\mathcal F_H[t]$ (resp.\ $\mathcal F^{\ell}_H[t]$) the space of polynomials in $t$ (resp.\ polynomials in $t$ of degree less or equal to $\ell$) with constant term $H$ and coefficients belonging to the space $\mathcal  F$. Note that $\mathcal F_H^\ell [t]\subset \mathcal F_H[t] \subset \mathcal F_H\{t\}$. 
Furthermore given an integer $\ell \in \N$ we define the truncation map $\pi_\ell: \mathcal F\{t\} \rightarrow \mathcal F^\ell[t]$ as \[\pi_\ell \left(\sum_{k \geq 0} F_k t^k\right) = \sum_{k=0}^\ell F_k t^k.\]
Moreover we define the tautological map $\tau_\ell: \mathcal F^\ell \rightarrow \mathcal F_H^\ell[t]$ as 
\[\tau_\ell(F_1,\ldots, F_\ell) = H+ \sum_{k=1}^\ell F_k t^k .\]
If the identification of $\mathcal F^\ell$ and $\mathcal F_H^\ell[t]$ is evident from the context we avoid the application of $\tau_\ell$ notationally.
%We give similar definitions when $\mathcal F$ is rather a space of jets than a space of maps.
\end{definition}

\begin{definition}\label{def:parcur} Let $H(t)\subset \holmaps(M, \C^{N'})$ be a smooth curve such that $H(0)\in \holmaps(M,M')$. We say that $H(t)$ is \emph{tangent to $\holmaps(M,M')$ to order $r$} at $H(0)$ if for any local 
parametrization $Z(s)$ of $M$ and any real-analytic function $\varrho'$ defined in a neighbourhood of $H(Z(0))$ in $\CNp$ and
vanishing on $M'$ we have that 
 $\rho'(H(Z(s),t))=O(t^{r+1})$. We denote the set of such parametrized curves by $\parcur{r}$ (or $\parcur{r}_H$ if we need to emphasize that $H=H(0)$).
\end{definition}

\subsection{Infinitesimal deformations of higher order}
\label{section:introhigherorderinfdef}

Assume that $M\subset \mathbb C^{N}$ and $M'\subset \mathbb C^{N'}$ are closed generic real-analytic submanifolds, and that $H: M \to \mathbb C^{N'}$ is a CR map with $H(M) \subset M'$. Denote by $\Gamma_H = \Gamma_{CR}(H^{*}(\mathbb C T(\mathbb C^{N'})))$ the space of real-analytic
CR sections of the pull back bundle of $\mathbb C T(\mathbb C^{N'})$ with respect to $H$.
Let $V \in \Gamma_H$, that is an expression of the kind
\[V =  \sum_{j=1}^{N'}V_j(Z)\frac{\partial}{\partial Z_j'}\] 
where each $V_j(Z)$ are real-analytic CR functions on $M$.

\begin{definition}\label{def:infDef}
We say that $V$ is an \emph{infinitesimal deformation of $H$} if the real part of $V$ is tangent to $M'$ along $H(M)$, that is if for every $p\in M$ and every real-analytic function $\rho'$ defined in a neighbourhood of $H(p)$ and
vanishing on $M'$ we have 
\begin{equation}\label{eq:infdef}
\real \sum_{j=1}^{N'} V_j (Z) \rho'_{Z_j'} (H(Z), \overline{H(Z)}) = 0,  \ \ \quad  \forall \ Z\in M \cap U,
\end{equation}
for some open neighbourhood $U$ of $p$.  We denote this subspace of $H^{*}(\mathbb C T(\mathbb C^{N'}))$ by $\mathfrak {hol}(H)$. (cf.\ \cite{dSLR15a, CH02})
\end{definition}

\begin{remark}
\label{rem:onepallp} If $M$ is assumed to be connected, it is enough to assume that $V$ is an infinitesimal 
deformation for $(M,p)$ for some $p\in M$, by the real-analyticity of the equation defining an infinitesimal 
deformation and standard connectivity arguments using the identity principle for real-analytic functions. 
\end{remark}

\begin{remark}
\label{rem:biholInvariance}
Let $H: M \rightarrow M'$ be a CR map and $\varphi$ and $\varphi'$ biholomorphisms defined in open 
neighbourhoods $U$ of $M$ in  $\C^N$ and $U'$ of $M'$ in $\C^{N'}$, respectively.
The space of infinitesimal deformations is biholomorphically invariant in the sense that if $\widetilde H = \varphi' \circ H \circ \varphi^{-1}$, then there is an isomorphism between $\hol(H)$ and $\hol(\widetilde H)$.
More precisely, one has the following transformation rule for infinitesimal deformations under biholomorphisms: Let $V(Z) =  \sum_{j=1}^{N'}V_j(Z)\frac{\partial}{\partial Z_j'}\in \hol(H)$, then we have
\begin{align*}
\widetilde V =  (D \varphi' V) \circ \varphi^{-1} \in \hol(\widetilde H).
\end{align*}
In particular if $H$ and $\widetilde H$ have  spaces of infinitesimal deformations of different dimensions, then one map cannot be contained in the $G$-orbit of the other.
\end{remark}

\begin{definition}\label{rem:reformulate} 
We say that $X \in \Gamma_H^k = \Gamma_H \times \dots \times \Gamma_H $  belongs to $\hol^k(H)$ if $\tau_k(X)\in \mathcal H_H[t]\cap \parcur{k}$.
In this case we call $X\in \Gamma^k_H$ an \textit{infinitesimal deformation of $H$ of order $k$}. Furthermore, for $j\leq k$ we define the projection $\pi_j:\hol^k(H)\to\hol^j(H)$, where we use the same symbol as in \cref{def:PiTau} by an abuse of notation, as 
\[\hol^k(H) \ni X=(X_1,\ldots,X_k)\to \pi_j(X)=(X_1,\ldots, X_j)\in \hol^j(H)\]
 and we put $\hol_j^k(H)=\pi_j(\hol^k(H))$.
\end{definition}

Similarly to \cref{def:infDef}, if $M$ is assumed to be connected, we can equivalently ask for 
\cref{rem:reformulate} to be satisfied only near a single point $p\in M$. This is going to be clear after we 
introduce defining equations for $\hol^k(H)$, which are very suitable for applications.

\begin{remark}	\label{rem:defPolyInfDef}
	There are universal polynomials $P_k$ depending on $(V^1, \ldots, V^{k-1})$ and their complex conjugates 
	as well as the  derivatives of $\rho'$ w.r.t. $Z'$ and $\bar Z'$ of order at most $k$ (evaluated along the map $H$), whose coefficients are combinatorial constants determined by the Faa-di-Bruno formula, such that
	\begin{align*}
	\frac{d^k}{d t^k} \rho'(h(t),\bar h(t)) = 2 \real\left(\frac{d^k h}{d t^k}  \rho'_{Z'}\right) + P_k\left(\rho'_{{Z'}^\alpha {\bar {Z'}}^\beta}  (h(t), \overline{h(t)}) , \frac{d^j h}{d t^j},  \overline{\frac{d^j h}{d t^{j}}} \colon \begin{array}{c}
		|\alpha| + |\beta| \leq k \\ j\leq k -1
	\end{array} \right).
	\end{align*}
	We will use these polynomials in order to facilitate defining equations for the space of infinitesimal deformations 
	of order $k$. To this end we drop the dependence on the derivatives of the defining function notationally.
\end{remark}
For example, in the case $k=1$ we have $P_1=0$ and for $k = 2$ we have:
\begin{align*}
P_2(V^1)= 2 \real\left( \sum_{i,j=1}^{N'}\rho'_{Z'_i Z'_j} V^1_i V^1_j + \sum_{i,j=1}^{N'}\rho'_{Z'_i \bar Z'_j} V^1_i \bar V^1_j \right).
\end{align*}

Using the polynomials defined in \cref{rem:defPolyInfDef} we can reformulate \cref{rem:reformulate} as follows:

\begin{lemma}\label{contain}
	 $V=(V^1,\ldots,V^k) \in \Gamma_H^k$ is an {infinitesimal deformation of $H$ of order $k$} if and only if $V$ satisfies the following system of equations:
	\begin{align}\label{eq:higherOrderInfDef}
	2 \real\left( \sum_{j=1}^{N'} V^{\ell}_j \rho'_{Z_j'} \right) + P_\ell(V^1,\ldots, V^{\ell-1}) = 0 \ \ {\rm for\ all} \ Z\in M\cap U, 1 \leq \ell \leq k,
	\end{align}
	for any  real-analytic function $\rho'$   defined near $H(p)$, vanishing on $M'$. The function $\rho'$ and its derivatives are computed along $H$, and  $U$ is some neighborhood of (some) $p\in M$. 
\end{lemma}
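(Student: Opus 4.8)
The plan is to unwind the two definitions packaged into \cref{rem:reformulate} and reduce the membership $V\in\hol^k(H)$ to the vanishing of finitely many Taylor coefficients of a single scalar real-analytic function of $t$, which the Faà di Bruno formula of \cref{rem:defPolyInfDef} identifies term by term with the system \eqref{eq:higherOrderInfDef}. First I would note that the requirement $\tau_k(V)\in\mathcal H_H[t]$ in \cref{rem:reformulate} is automatic: since each $V^\ell\in\Gamma_H$ is a CR section of $H^*(\mathbb C T(\CNp))$, its components are CR functions on $M$, so $\tau_k(V)=H+\sum_{\ell=1}^k V^\ell t^\ell$ is a polynomial in $t$ with coefficients in $\mathcal H(M,\CNp)$ and constant term $H$. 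Hence $V\in\hol^k(H)$ is equivalent to $\tau_k(V)\in\parcur{k}$, which by \cref{def:parcur} means that, writing $h(Z,t)=H(Z)+\sum_{\ell=1}^k V^\ell(Z)t^\ell$, the function
\[ g(Z,t):=\rho'\big(h(Z,t),\overline{h(Z,t)}\big) \]
satisfies $g(Z,t)=O(t^{k+1})$ for every $Z\in M$ (ranging over parametrizations $Z(s)$ is the same as ranging over points of $M$) and every real-analytic $\rho'$ vanishing on $M'$ near the relevant image point.

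Next, since $g(Z,\cdot)$ is real-analytic in $t$, the condition $g(Z,t)=O(t^{k+1})$ is equivalent to the vanishing of its first $k+1$ Taylor coefficients in $t$, i.e.\ to $\partial_t^\ell g(Z,t)|_{t=0}=0$ for $\ell=0,1,\dots,k$. The order-zero equation reads $\rho'(H(Z),\overline{H(Z)})=0$ and holds automatically because $H(M)\subset M'$. For $1\le\ell\le k$ I would apply \cref{rem:defPolyInfDef} to the curve $t\mapsto h(Z,t)$; using $\partial_t^j h(Z,t)|_{t=0}=j!\,V^j(Z)$ for $j\le\ell$ together with $h(Z,0)=H(Z)$, the evaluation at $t=0$ splits, after dividing by $\ell!$ (equivalently, reading off the coefficient of $t^\ell$ in $g$), into the part that is linear in the top coefficient $V^\ell$, namely $2\real\big(\sum_{j}V^\ell_j\,\rho'_{Z_j'}\big)$, plus a remainder depending only on $V^1,\dots,V^{\ell-1}$ (and the derivatives of $\rho'$ along $H$), which is precisely $P_\ell(V^1,\dots,V^{\ell-1})$. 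Thus $\partial_t^\ell g|_{t=0}=0$ is, up to the positive factor $\ell!$, exactly the $\ell$-th equation of \eqref{eq:higherOrderInfDef}. Assembling these for $\ell=1,\dots,k$ yields the system, and conversely the system forces all Taylor coefficients up to order $k$ to vanish, hence $g(Z,t)=O(t^{k+1})$.

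Finally, the equations \eqref{eq:higherOrderInfDef} are real-analytic in $Z$, so by the identity principle on the connected manifold $M$ (cf.\ \cref{rem:onepallp}) it suffices to require them on $M\cap U$ for a neighbourhood $U$ of a single point $p$, matching the formulation of the statement. The only step needing genuine computation rather than bookkeeping is the Faà di Bruno separation of the second paragraph: one must verify that upon setting $t=0$ the sole contribution involving the highest coefficient $V^\ell$ is the displayed linear term, while every product of derivatives of $\rho'$ with the $\partial_t^j h|_{t=0}$ for $j<\ell$ collapses into a polynomial in the strictly lower-order coefficients. This triangular structure — a leading linear term in $V^\ell$ plus a lower-order polynomial — is exactly what \cref{rem:defPolyInfDef} supplies, so beyond invoking it the argument is a matter of unwinding the definitions.
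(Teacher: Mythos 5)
Your proposal is correct and follows essentially the same route as the paper's proof: reduce membership in $\hol^k(H)$ to the vanishing of the $t$-Taylor coefficients of $\rho'(h(Z,t),\overline{h(Z,t)})$ and identify those coefficients with the system \eqref{eq:higherOrderInfDef} via \cref{rem:defPolyInfDef}. You are in fact slightly more careful than the paper about the $\ell!$ normalization between $\partial_t^\ell h|_{t=0}$ and $V^\ell$ and about the automatic order-zero equation, but these are cosmetic differences, not a different argument.
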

\begin{proof}
	Given $H\in \mathcal H(M,M')$ and $k \in \N$, let $X$ be as in \cref{rem:reformulate}, fix $p\in M$, and a real-analytic function $\rho'$ as in the statement of the Lemma. 
	Define $H(t) = \tau_k(X)$ and assume that $H(t) \in \parcur{k}$, that is 
	\begin{align}\label{eq:containOrder}
	\rho'(H(Z,t),\overline{H(Z,t)} )= O(t^{k+1}),
	\end{align}
	for all $Z\in M$ close to $p$, where we have written $H(t,Z)= H(t)(Z)$.
	Differentiating with respect to $t$ up to order $k$ and using \cref{rem:defPolyInfDef} 
 we can write 
  \[\rho'(H(Z,t),\overline{H(Z,t)} )= \sum_{\ell\geq 0} c_\ell t^\ell,\] 
 where 
 \[c_\ell = 2 \real\left( \sum_{j=1}^{N'} \frac{\partial^{\ell}H_j}{\partial t^{\ell}}(Z,0) \rho'_{Z_j'} \right) + P_\ell\left(\frac{\partial H}{\partial t}(Z,0),\ldots, \frac{\partial^{\ell-1}H}{\partial t^{\ell-1}}(Z,0) \right). \] 
 Hence \eqref{eq:containOrder} is equivalent to $c_\ell = 0$ for all $Z\in M$ close to $p$, $1 \leq \ell \leq k$.
	This means that $\tau_k^{-1}(H(t))$ satisfies \eqref{eq:higherOrderInfDef} and thus $X = \tau_k^{-1}(H(t))$ satisfies our system of equations.
	On the other hand let $X$ satisfy the system of equations and put $H(t) = \tau_k(X)$, then $c_{\ell} = 0$ for $1\leq \ell \leq k$. Thus $X$ satisfies  \cref{rem:reformulate}. 
	\end{proof}

\begin{definition}
	\label{def:autk}
	We define $\aut^k(H) \subset \hol^k(H)$ to be the set of $V=(V_1,\ldots, V_k)$ such that there exists a curve $g(t) \in G$ with $g(0)\cdot H = H$ satisfying $\tau_k^{-1}(\pi_k(g(t) \cdot H)) = V$.
	We will call $\aut^k(H)$ the set of \emph{trivial infinitesimal deformations of $H$ of order $k$}. 	\\ In particular if $k=1$ a small computation shows that $\aut^1(H) = \aut(H) = H_*(\hol(M))+\hol(M')|_H$ (cf. \cref{l:dimension} below). 
\end{definition}

\begin{remark}
For later reference, we list here some simple facts regarding the spaces defined above. For all $k_1,k_2,j\in \mathbb N$ with $j\leq k_1<k_2$ we have
\begin{itemize}
\item $\pi_{k_1}\circ\pi_{k_2}=\pi_{k_1}$
\item $\aut^j(H)=\pi_j(\aut^{k_2}(H))\subset  \hol^{k_2}_{j}(H)\subset \hol^{k_1}_{j}(H)\subset \hol^j(H)$
\end{itemize}
Note that the inclusion $\hol^{k_2}_{j}(H)\subset \hol^{k_1}_{j}(H)$ comes from the facts that $\pi_{k_1}(\hol^{k_2}(H))\subset \hol^{k_1}(H)$ and  $\pi_j(\hol^{k_2}(H))=\pi_{j}(\pi_{k_1}(\hol^{k_2}(H)))\subset \pi_{j}(\hol^{k_1}(H))$.

\end{remark}

The following lemma shows that the set $\hol^k_1(H)$ is invariant under the linear action of the subspace $\aut(H)$.

\begin{lemma}
	\label{lem:invarianceAut}
For all $k\geq 1$, $\hol^k_1(H)=\hol^k_1(H)+\aut(H)$.
\end{lemma}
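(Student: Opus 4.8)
The plan is to prove the nontrivial inclusion $\hol^k_1(H)+\aut(H)\subseteq \hol^k_1(H)$; the reverse inclusion is immediate since $\aut(H)$ is a linear subspace and contains $0$. So I fix $V\in \hol^k_1(H)$ and $W\in \aut(H)$ and aim to show $V+W\in \hol^k_1(H)$. The idea is to deform a curve realizing $V$ by a suitable one-parameter subgroup of $G$ whose first-order effect is exactly $W$, and to check that this deformation preserves tangency to order $k$.

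First I would realize both data by curves. Since $V\in \hol^k_1(H)$, \cref{rem:reformulate} provides a polynomial curve $H(t)=H+Vt+\sum_{\ell=2}^{k}V^\ell t^\ell\in \mathcal H_H[t]\cap \parcur{k}$, i.e.\ $\rho'(H(Z,t),\overline{H(Z,t)})=O(t^{k+1})$ for every real-analytic $\rho'$ vanishing on $M'$. Writing $W=H_*X+Y|_H$ with $X\in \hol(M)$ and $Y\in \hol(M')$, I would use that $\hol(M)$ and $\hol(M')$ are the Lie algebras of $\Aut(M)$ and $\Aut(M')$ to pass to the one-parameter subgroups $\sigma(t)=\exp(-tX)\in \Aut(M)$ and $\sigma'(t)=\exp(tY)\in \Aut(M')$, and set $g(t)=(\sigma(t),\sigma'(t))\in G$, so that $g(0)=\id$. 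The candidate curve is then $g(t)\cdot H(t)=\sigma'(t)\circ H(t)\circ \sigma(t)^{-1}$.

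The core of the argument is to show this curve is tangent to $\mathcal H(M,M')$ to order $k$. For $Z\in M$ near $p$ I would substitute $\zeta=\sigma(t)^{-1}(Z)\in M$ (which stays near $p$ since $\sigma(0)=\id$), reducing the evaluation of $\rho'$ along $g(t)\cdot H(t)$ to $R\bigl(H(t)(\zeta),\overline{H(t)(\zeta)},t\bigr)$, where $R(Z',\bar Z',t):=\rho'\bigl(\sigma'(t)(Z'),\overline{\sigma'(t)(Z')}\bigr)$. Because $\sigma'(t)\in \Aut(M')$ preserves $M'$, the function $R(\cdot,\cdot,t)$ vanishes on $M'$ for every $t$, so each coefficient $R_j$ in the expansion $R=\sum_{j\ge 0}R_j\,t^j$ is real-analytic and vanishes on $M'$. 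Applying the hypothesis $H(t)\in \parcur{k}$ to each $R_j$ gives $R_j(H(t)(\zeta),\overline{H(t)(\zeta)})=O(t^{k+1})$, and multiplying by $t^j$ and summing yields $R(H(t)(\zeta),\ldots,t)=O(t^{k+1})$. Hence $g(t)\cdot H(t)\in \parcur{k}$. Truncating does not affect tangency to order $k$ and leaves constant term $g(0)\cdot H=H$, so $\pi_k(g(t)\cdot H(t))\in \mathcal H_H[t]\cap \parcur{k}$ and its coefficient vector lies in $\hol^k(H)$ by \cref{rem:reformulate}.

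Finally, a first-order expansion of $\sigma'(t)\circ H(t)\circ \sigma(t)^{-1}$ at $t=0$, using $\sigma(t)^{-1}(Z)=Z+tX(Z)+O(t^2)$ and $\sigma'(t)(Z')=Z'+tY(Z')+O(t^2)$, produces first-order term $V+H_*X+Y|_H=V+W$. Thus $V+W=\pi_1\bigl(\tau_k^{-1}(\pi_k(g(t)\cdot H(t)))\bigr)\in \hol^k_1(H)$, and as $X\in\hol(M)$ and $Y\in\hol(M')$ vary, $W$ ranges over all of $\aut(H)=H_*(\hol(M))+\hol(M')|_H$, giving the claim. I expect the main obstacle to be the tangency verification in the third paragraph, specifically controlling the $t$-dependence of the pulled-back defining function $R$; this is exactly what the coefficientwise argument above resolves.
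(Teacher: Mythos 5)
Your proposal is correct and follows essentially the same route as the paper: realize $V$ by a polynomial curve $h(t)\in\parcur{k}$, conjugate it by one-parameter families of automorphisms realizing $W$, verify that the conjugated curve stays in $\parcur{k}$ (the paper handles the pullback of the defining function by a matrix factor $A(t)$ and a reparametrization $\theta(t,s)$ of $M$, while you expand the pulled-back function in powers of $t$ and apply the hypothesis to each coefficient — an equivalent device), and read off the first-order term $V+W$. No substantive difference.
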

\begin{proof}
Let $V_1\in \hol^k_1(H)$, $\widetilde V\in \aut(H)$. We need to show that $V_1 + \widetilde V\in \hol^k_1(H)$. Since $\widetilde V\in \aut(H)$ there exist one parameter families $\psi(t)\in \Aut(M)$ and $\phi(t)\in \Aut(M')$ such that $\phi(0) \circ H \circ \psi(0)^{-1}= H$ and if we write $\tilde h (t) \coloneqq  \phi(t) \circ H \circ \psi(t)^{-1}$, 
\[\widetilde V = \frac{d}{d t} \tilde h(t)|_{t=0} = \dot \phi(0) - DH \dot \psi(0) . \]

Since $V_1\in \hol^k_1(H)$, there exist $V_2,\ldots, V_k$ such that $V=(V_1,\ldots,V_k)\in \hol^k(H)$. Let $h(t)=H+V_1t+\ldots +V_k t^k$, and write 
$h(t,Z)= h(t)(Z)$ (with similar notation for all of the families appearing). By \cref{rem:reformulate} and  \cref{def:parcur}  we have that for any real-analytic function $r$ defined near $H(p)$ for some $p\in M$ and vanishing
on $M'$ that, for some neighborhood $U$ of $p$, 
\[r(h(t), \overline{h(t)})|_{M\cap U}=O(t^{k+1}).\] 
 The above equations translates 
into the fact that for any parametrization $Z(s)$ of $M$ with $Z(0) =p$ and any $r$ as above  we have that
\begin{align}
\label{eq:OrderParam}
r(h(t,Z(s)),\overline{h(t,Z(s))})= \sum_{j=k+1}^\infty \beta_{j}(s) t^j.
\end{align}

Fix any such $p$ and real-analytic function $r(Z', \bar Z')$ vanishing along $M'$ near $H(p)$, and denote by
 $\rho' = (\rho'_1, \dots , \rho'_d)^t$ a defining function of $M'$ 
 near $H(p)$.

First we claim that if we write $\hat h(t) \coloneqq \phi(t) \circ h(t) \circ \psi(t)^{-1}$, then
\begin{align}
\label{eq:OrderK}r\left( \hat h(t, Z(s)), \overline{\hat h(t, Z(s))}\right)=O(t^{k+1}).
\end{align}
Since $\phi(t) \in \Aut(M')$, there exists a family of $d\times d$-matrices $A(t)$ such that $r(\phi(t,Z'), \overline{\phi(t,Z')} ) = A(t) \rho'(Z', \bar Z')$ and since $\psi(t) \in \Aut(M)$ there exists a mapping $\theta : \R^{2n+d+1} \rightarrow \R^{2n+d}$, such that $\psi(t)^{-1}(Z(s)) = Z(\theta(t,s))$. Then the left-hand side of \eqref{eq:OrderK} becomes
\begin{align*}
	r(\hat h(t,Z(s)), \overline{\hat h(t,Z(s))}) & = 
	A(t) \rho'(h(t,\psi(t,Z(s)))) = A(t) \rho'(h(t,Z(\theta(t,s)))) \\
	& = A(t)\sum_{j=k+1}^\infty {\tilde \beta}_{j}(\theta(t,s)) t^j = O(t^{k+1}),
\end{align*}
using \eqref{eq:OrderParam} for $r=\rho'_j$, for $j=1,\dots, d'$, and writing
$\tilde \beta$ for the corresponding vector of $\beta$'s.
Let now $\hat V_j = \frac{1}{j!} \frac {d^j \hat h(t)}{d t^j}|_{t=0}$ for $1 \leq j \leq k$. Since $p$ and $r$ were arbitrary,  \eqref{eq:OrderK} implies that  $\hat V = (\hat V_1,\ldots, \hat V_k) \in \hol^k(H)$, which establishes the claim. %, using a similar argument as in the proof of \cref{contain}.
 Thus $\hat V_1 \in \hol^k_1(H)$ and if we write $\tilde V = \dot \phi(0) - DH \dot\psi (0)$ we have
\begin{align*}
	\hat V_1 & =  \frac {d \hat h(t)}{d t}|_{t=0} =  \frac {d \phi(t, h(t, \psi(t,Z)^{-1})}{d t}|_{t=0}  \\
	& = \dot \phi(0) + \dot h (0) - DH \dot\psi(0). 
	%\left\{\frac{d \phi(t,H(t,\psi(t)))}{d t} + D \phi(t,H(t,\psi(t))) \left( \frac{d H(t,\psi(t))}{d t} + D H(t,\psi(t)) \frac{d \psi(t)}{d t} \right)\right\}\biggr|_{t=0} \\
	 \\ & =   V_1 + \tilde V,
\end{align*}
which proves the lemma.
\end{proof}

%\begin{remark}
%	\label{rem:invarianceJetAut}
%	Note that the statement in \cref{lem:invarianceAut} also holds in the jet space, to be more precise if we write $\mathcal A_1^k(p) \coloneqq j_{p}^{2 \mathbf t k_0}(\hol^k_1(H))$ and $\mathcal B_1(p) \coloneqq  j_{p}^{2 \mathbf t k_0}(\aut(H))$ we have $\mathcal A^k_1(p) = \mathcal A^k_1(p) + \mathcal B_1(p)$.
%\end{remark}

\subsection{CR geometry}\label{sec:CRgeometry}
In this subsection we recall some standard notation from CR geometry, for more details we refer the interested reader to e.g. \cite{BERbook}.
For a generic real-analytic CR submanifold $M\subset \mathbb C^N$ we denote its CR dimension by $n$ and its real codimension by $d$  such that $N=n+d$. In this case it is well-known (cf. \cite{BERbook}) that one can choose \textit{normal coordinates}
 $(z,w)\in \mathbb C^n\times \mathbb C^d=\mathbb C^N$ such that the \textit{complexification} $\mathcal M\subset \mathbb C^{2N}$ of $M$ in coordinates $(z,\chi,w,\tau) \in \C^n \times \C^n \times \C^d \times \C^d$ is 
 given by
\begin{align*}
w = Q(z,\chi, \tau), \quad \mathrm{(or~equivalently:} \ \tau = \overline Q(\chi,z, w) {\rm )},
\end{align*}
for a suitable germ of a holomorphic map $Q:\mathbb C^{2n+d}\to\mathbb C^{d}$ additionally satisfying $Q(z,0,\tau) \equiv Q(0,\chi,\tau) \equiv \tau$ and $Q(z,\chi, \overline Q (\chi, z, w)) \equiv w$.

A basis of CR and anti-CR vector fields, which are tangent to $\mathcal M$, we denote by $L_j$ and $\bar L_j$, respectively, where $j=1,\ldots, n$.

We also recall the definition of the Segre maps. For any $j\in \N$ let $(x_1,\ldots, x_j)$ be coordinates of $\mathbb C^{nj}$, where $x_\ell\in \mathbb C^n$ for $\ell = 1, \ldots, j$. For better readability we will use the notation $x^{[j;k]} := (x_j,\ldots,x_k)$. The \textit{Segre map} of order $q\in \N$ is the map $S^q_0:\mathbb C^{nq}\to \mathbb C^N$ defined as follows:
\begin{align*}
S^1_0(x_1) := (x_1,0), \quad S^q_0\bigl(x^{[1;q]}\bigr) := \left (x_1, Q\left(x_1,\overline S^{q-1}_0\bigl(x^{[2;q]}\bigr) \right)\right),
\end{align*}
where $\overline S^{q-1}_0$ is the power series whose coefficients are conjugate to the ones of $S^{q-1}_0$ and $Q$ is a map satisfying the properties from above. %In particular if $w-Q(z,\chi,\tau)=0$ is a local defining equation of the complexification of a CR submanifold $M \subset \C^N$, we say the Segre map $S^q_0$ is associated to $M$.
The \textit{$q$-th Segre set} $\mathcal S^q_0\subset \mathbb C^N$ is defined as the image of the map $S^q_0$.
A CR submanifold $M \subset \C^N$ is called \textit{minimal at $p \in M$} if there is no germ of a CR submanifold $\widetilde M \subsetneq M$ of $\C^N$ through $p$ with the same CR dimension as $M$ at $p$.
The minimality criterion of Baouendi--Ebenfelt--Rothschild \cite{BER96} states that if $M$ is minimal at $0$, then $S_0^q$ is generically of full rank for sufficiently large $q$.

\section{The generalized jet parametrization property}\label{sec:jetparam}

In this section we introduce the generalized jet parametrization property which allows us to prove our local rigidity results. %Some of these tools are quite well-known, but since we need to use them in a particular way we will outline some of the proofs (only if there are some significant changes with respect to the ones that can be found in the literature).\\
%Let $M$, $M'$ be as in the previous section, and assume that $M$ is minimal and compact. %We denote by $\mathcal F_{k_0}$ the set of $k_0$-nondegenerate holomorphic maps $H:M\to M'$ and for any $\ell\in \mathbb N$, $p\in M$ we denote by $J_p^\ell$ the space of $\ell$-jets at $p$ of holomorphic maps $\mathbb C^N\to \mathbb C^{N'}$. 
%\noindent \textbf{The jet parametrization property.}
%\label{sec:jetparamproperty}
 The following definition is inspired by \cite{LMZ08}.

\begin{definition}\label{def:jetparam}
Let $M, M'$ be submanifolds of $\C^N$ and $\C^{N'}$ respectively, and let $\mathcal F\subset \holmaps(M,M')$ be an open subset. We say that $\mathcal F$ satisfies the \emph{generalized jet parametrization property of order $\tz\in \mathbb N$} if the following holds.

\

\noindent{\bfseries GJPP:}\emph{  For every $H\in \mathcal F$ there exist a neighborhood $U$ of $M$ in $\C^{N}$, a neighborhood $\mathcal N$ of $H$ in $\holmaps(U,\C^{N'})$, a finite collection of points $p_1,\ldots,p_m\in M$,
a finite collection of polynomials
$q_{k}(\Lambda)$ on $J_{p_k}^{\tz}$, open neighborhoods $\mathcal U_{k}=V_k\times U_k$ of  $\{p_k\} \times  j_{p_k}^{\tz}H$ in $\mathbb C^N \times J_{p_k}^{\tz}$, with $\{V_k\}$ forming an open cover of $M$ and $U_{k} = \{q_{k}\neq 0\}$, and holomorphic maps
$\Phi_{k} \colon \mathcal U_{k} \to \C^{N'} $, which are of the form 
\begin{align} \label{rationalJetParam}
\Phi_{k} (Z,\Lambda) =  \sum_{\alpha\in \N_0^N} \frac{p_{k}^{\alpha} (\Lambda)}{q_{k}(\Lambda)^{d^{k}_\alpha}} Z^\alpha, \quad  p_{k}^\alpha, q_{k} \in \C[\Lambda], \quad d^{k}_\alpha\in\N_0,
\end{align}
such that for every curve $\widetilde H(t)\in \parcur{r}$ satisfying $\widetilde H(t)\in \mathcal N$ for all $t$ the following holds:
\begin{itemize}
\item $j_{p_k}^{\tz} \widetilde  H(t) \in U_{k}$ for all $t$,
\item for all $1\leq k\leq m$ we have \[ \widetilde  H(Z,t)|_{V_{k}} = \Phi_{k} (Z,j_{p_k}^{\tz} \widetilde  H(t))+O(t^{r+1}).\]
%where $V_{k}$ is the projection of $\mathcal U_{k}$ to the $\mathbb C^N$ factor.
\end{itemize}
}
\emph{
In particular, there exist neighborhoods $W_k$ of $j_{p_k}^{\tz}H$ in $J_{p_k}^{\tz}$ and (real) polynomials $c^{k}_i$, $i\in\N$ on $J_{p_k}^{\tz}$ such that
\begin{align}
\label{defEquationJetParam}
 A_k=j_{p_k}^{\tz} (\mathcal N \cap \mathcal F) = W_k\cap \{ \Lambda\in J_{p_k}^{\tz} \colon q_{k}(\Lambda) \neq 0, \, c^{k}_i (\Lambda, \bar \Lambda) =0 \}.
 \end{align}
Furthermore  for any $\widetilde H(t)\in \parcur{r}$ satisfying $\widetilde H(t)\in \mathcal N$ for all $t$, with  $\widetilde \Lambda(t) = j_{p_k}^{\tz} \widetilde H(t)$ we have 
\begin{align}\label{defEquationJetParamcurves}
c^{k}_i (\widetilde \Lambda(t), \bar{\widetilde \Lambda}(t)) = O(t^{r+1}), \qquad i \in \N.
\end{align}
}
\end{definition}

\begin{remark}
We shall say that an open subset $\mathcal F\subset \mathcal H (M,M')$ satisfies the generalized jet parametrization property 
if for each $H\in \mathcal F$
there exists a neighbourhood ${\mathcal U}_H$ of $H$ in $\mathcal{F}$ and 
an integer $\mathbf{t}_H$ such that ${\mathcal U}_H$ satisfies the GJPP of order $\mathbf{t}_H$.
\end{remark}

\begin{remark}
Applying the GJPP for $t=0$, we obtain the familiar reproducing property $\widetilde  H(Z)|_{V_{k}} = \Phi_{k} (Z,j_{p_k}^{\tz} \widetilde  H)$ for all $\widetilde H\in \mathcal F\cap \mathcal N$, and 
in particular $\Phi_{k}(p_k, j_{p_k}^{\tz}\widetilde H) \in M'$ for all such $\widetilde H$. 
In particular, for any curve $\widetilde H(t)\subset  \mathcal F \cap \mathcal N$ we get $\widetilde  H(Z,t)|_{V_{k}} = \Phi_{k} (Z,j_{p_k}^{\tz} \widetilde  H(t))$ (with no error term): in other words, the GJPP also holds for  $r=+\infty$.
\end{remark}

\begin{remark}\label{def:linjetparamgen}
	Let $M\subset \mathbb C^N$, $M'\subset \mathbb C^{N'}$ and $\mathcal F$ be as in  \cref{def:jetparam}. For any  $\mathbf t_1,\ell \in \mathbb N$,  if $\mathcal F$ satisfies the GJPP of order $\mathbf t_1$, then it also satisfies the
	\emph{jet parametrization property of order $\mathbf t_1$ 
	 for $\ell$-th order infinitesimal deformations}, which the reader can check by expanding in terms of powers of $t$:

\textit{
For all $H\in \mathcal F$, there exists a finite collection of points $q_1,\ldots,q_m\in M$, a neighborhood $\Omega_1$ of $H$ in $\mathcal H(M,\mathbb C^{N'})$, continuous functions $r_{k,i}:\Omega_1\times (J_{q_k}^{\mathbf t_1})^{\ell}\to \mathbb C^\ell$, $i\in\mathbb N$, $r_{k,i}=(r_{k,i}^1,\ldots,r_{k,i}^\ell)$  with $r_{k,i}^j(\widetilde H,\Lambda,\overline \Lambda)$ 
weighted homogeneous of degree $j$ polynomial in $(\Lambda_1,\ldots,\Lambda_j)$, and continuous maps $K_k=(K_k^1,\ldots, K_k^{\ell}):\Omega_1\times (J_{q_k}^{\mathbf t_1})^{\ell}\to (\mathbb C\{Z\}^{N'})^{\ell}$, where the $j$-th component of $K_k(\widetilde H,\Lambda,Z)$ depends on $\Lambda_1, \ldots, \Lambda_j$ and is a weighted homogeneous polynomial in these variables, satisfying the following statement.
	For any given $\widetilde H\in \Omega_1$, there exists a solution
	$V = (V^1,\ldots,V^{\ell})\in \Gamma^{\ell}_{\widetilde H}$
	to \eqref{eq:higherOrderInfDef} whose
	$\mathbf t_1$-jet at $q_k$ is $\Lambda$ for all $1\leq k\leq m$ if and only if 
	\begin{equation}\label{eq:coeffexpand}
	r_{k,i}(\widetilde H, \Lambda, \overline \Lambda)=0 \mbox{ for all } 1\leq k\leq m, \ i\in \mathbb N.
	\end{equation}
	In this case, for all $1\leq k\leq m$ the (unique) solution $V$ is given by $V(Z)=K_k(\widetilde H,\Lambda, Z)$ in a neighborhood of $q_k$ in $\mathbb C^N$.
}
\end{remark}

\begin{remark}\label{rem:twentytwo}
Since $(J_{p_k}^{\mathbf t_1})^{\ell}\cong (J_{p_k}^{\mathbf t_1})_H^\ell[t]$ and $(\mathbb C\{Z\}^{N'})^{\ell}\cong \mathcal H_H^\ell[t]$, then $K_k$ can be interpreted as a map $\Omega_1\times (J_{p_k}^{\mathbf t_1})_H^\ell[t]\to \mathcal H_H^\ell[t]$.
\end{remark}

\begin{remark}
Clearly, the statements in \cref{def:jetparam} and \cref{def:linjetparamgen} remain true if one adds more points to the $p_k$ (resp.\ the $q_k$). In particular we can take the union of the $p_k$ and the $q_k$, so that both statements are satisfied at the same time: we will still denote the union by $\{p_1,\ldots,p_m\}$. 
\end{remark}

We next state a couple of remarks and some simplifications to the preceding notation.

\begin{remark}\label{rem:diagram}
It is convenient to elaborate further on the GJPP. To simplify the statements, we use the following notation:
\begin{itemize}
\item $\mathbf p=(p_1,\ldots,p_m)$;
\item $ V=V_1\times \cdots \times V_m$, $\mathcal H|_V=\mathcal H(V_1,\C^{N'})\times \cdots\times \mathcal  H(V_m,\C^{N'})$;
\item $r_V:\mathcal H(U,\C^{N'})\to \mathcal H|_V$ is defined as $r_V(H)=(H|_{V_1},\ldots, H|_{V_m})$;
\item $J^{\tz}_{\p}=J_{p_1}^{\tz}\times \cdots \times J_{p_m}^{\tz}$; $\mathcal U=\mathcal U_1\times\cdots\times\mathcal U_m$ ($\mathcal U$ is a neighborhood of $\p\times j^{\tz}_\p(r_V(H))$ in $(\C^{N})^m\times J^{\tz}_\p$)
\item $j_\p^{\tz}:\mathcal H|_V\to J^{\tz}_\p$ is defined as $j_\p^{\tz}(H_1,\ldots,H_m)=(j_{p_1}^{\tz} H_1,\ldots, j_{p_m}^{\tz} H_m)$;
\item $\Phi:\mathcal U\to (\C^{N'})^m$ is defined as $\Phi=(\Phi_1,\ldots,\Phi_m)$, so that for any $\Lambda\in J^{\tz}_\p$ we have that $\Phi(\cdot,\Lambda)\in \mathcal H|_V$;
\item $A=A_1\times\cdots\times A_m$ ($A$ is an analytic subset of a neighborhood of $j_\p^{\tz}(r_V(H))$ in $J_{\p}^{\tz}$)
\item For $\Lambda \in A$ we have $\Phi(\cdot, \Lambda) \in r_V(\mathcal F)$.
\end{itemize}
% We give analogous definitions for the objects in \cref{linjetparamgen}.

With this new notation the GJPP (cf.\ also \cref{rem:twentytwo}) can be summarized in the following diagram. Fixed $\ell\in \mathbb N$,

\begin{center}

\begin{tikzcd}[column sep = 4em, row sep = 3em]
  \mathcal F_H\{t\}\ar[hook]{r} \dar["\pi_{\ell}"]&   \holmaps_H\{t\} \rar["r_V"]  \dar["\pi_\ell"] & (\holmaps |_V)_H\{t\} \rar[shift left, start anchor= east, end anchor= west,"j_{\p}^{\tz}"]  \dar["\pi_{\ell}"]& (J_{\p}^{\tz})_H\{t\} \lar[shift left, start anchor= west, end anchor= east, dashed, "\Phi" ] \dar["\pi_{\ell}"]  \\
 %\subset  
 \hol^{\ell}(H)\ar["\tau_{\ell}"]{r} 
 %\ar[bend right=20, "j_{\p}^{\tz}\circ\tau_{\ell}"]{rr} 
 &  \holmaps_H^{\ell}[t] \rar["r_V"]  & (\holmaps |_V)_H[t]\rar[shift left, start anchor=east, end anchor=west,"j_{\p}^{\tz}"]   & (J_{\p}^{\tz})_H^{\ell}[t]\lar[shift left, start anchor= west, end anchor= east, dashed, "K" ] 
\end{tikzcd}
\end{center}
For better readability we have made $\Phi$ defined on all of $(J_{\p}^{\tz})_H\{t\}$ instead of $\mathcal U$.
The diagram above is commutative if we remove the dashed arrows. Furthermore, the restriction of $j_{\p}^{\tz}$ to $r_V(\mathcal F_H\{t\})$ and $r_V(\tau_\ell (\hol^{\ell}(H)))=r_V(\holmaps_H^{\ell}[t]\cap \parcur{\ell})$ is injective, and $\Phi\circ j_{\p}^{\tz}(r_V(H(t))) = r_V(H(t))$ (respectively $K\circ j_{\p}^{\tz}(r_V(X(t)))= r_V(X(t))$ for all $H(t)\in \mathcal F_H\{t\} $ (respectively for any $X(t)=\tau_\ell(X)\in \tau_\ell (\hol^{\ell}(H))=\holmaps_H^{\ell}[t]\cap \parcur{\ell}$).
\end{remark}

%\begin{definition}\label{def:abstractjetparam}
%	A subset $X \subset (\C\{Z\})^k$ has the \textit{jet parametrization property} if for all $x %\in X$ there exist $\ell \in \N$, $U \subset J_0^\ell$ an open neighborhood of $j_0^\ell x$ and %an analytic mapping $\Psi: U \rightarrow (\C\{Z\})^k$ such that $y = \Psi(j_0^\ell y)$ for all $y \in (j_0^\ell)^{-1}(U) \cap X$ and $j_0^m \circ \Psi$ is rational for all $m$.
%\end{definition}

Next we provide the definition of the induced group action in the jet space. 

\begin{definition}\label{def:inducedAction}
Suppose that $\mathcal F$ satisfies the jet parametrization property from \cref{def:jetparam} and let $H \in \mathcal F$. Consider $p_1, \ldots, p_m \in M$ according to \cref{def:jetparam}. For $1\leq k \leq m$ denote by $\Lambda_k = j_{p_k}^{\tz} H$, and let $V_k$ be a small neighborhood of $\Lambda_k$ in $J_{p_k}^{\tz}$. Let $W_k$ be a neighborhood of $p_k$ in $\mathbb C^N$ such that $\Phi_k(\Lambda)$ is defined over $W_k$ for all $\Lambda\in V_k$. For any $g=(\sigma,\sigma')\in G$ we have that $g\cdot \Phi_k(\Lambda)=\sigma'\circ\Phi_k(\Lambda)\circ \sigma^{-1}$ is defined on $\sigma(W_k)$. We denote by $G'_{p_k}$ the set of $g\in G$ such that $p_k\in \sigma(W_k)$; note that $G'_{p_k}$ contains a neighborhood of the identity in $G$. We define an action of $G'_{p_k}$ in $V_k$  as follows:
\begin{align*}
	g \cdot \Lambda \coloneqq j_{p_k}^{\tz} (g \cdot \Phi_k(\Lambda)),  \qquad \Lambda \in V_k.
\end{align*}
Note that we are using the fact that elements of $\Aut(M)$ and $\Aut(M')$ extend holomorphically to small neighborhoods of $M$ and $M'$ respectively, and $\Phi_k(\Lambda)$ maps the neighborhood in $M$ into the other neighborhood in $M'$ if $\Lambda$ is close enough to $\Lambda_k$. 	

We define $G'_\p$ as the intersection of the sets $G'_{p_k}$ for $1\leq k\leq m$. Of course the set $G'_\p$ still contains a neighborhood of the identity in the group $G$.
\end{definition}

We will need the following result concerning the structure of the $G'_\p$-orbit of $\Lambda$ close to $\Lambda$.

\begin{lemma}\label{lem:analyticOrbit}
Let $G_1,\ldots, G_\ell$ be the connected components of $G$, with $G_1$ being the connected component of the identity. For $k=1,\ldots, \ell$ let $G'_{\p,k}=G'_\p\cap G_k$. For any $\Lambda \in A$ and $1\leq k\leq \ell$ we have that the connected component of $G'_{\p,k} \cdot \Lambda$ containing $\Lambda$ is a real-analytic embedded submanifold $W_k$ of a neighborhood $U$ of $\Lambda$ in $J^{\tz}_\p$.
\end{lemma}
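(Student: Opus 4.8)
The plan is to realize $G'_{\p,k}\cdot\Lambda$ as the image of a real-analytic orbit map and to apply the constant rank theorem. Introduce the orbit map $\mu_\Lambda\colon G'_\p\to J^{\tz}_\p$, $\mu_\Lambda(g)=g\cdot\Lambda=j^{\tz}_{p_k}(\sigma'\circ\Phi_k(\Lambda)\circ\sigma^{-1})$ from \cref{def:inducedAction} (recall $G'_\p$ contains a neighbourhood of the identity, so $\Lambda=\mu_\Lambda(e)$ lies in the $k=1$ orbit). Since it suffices to argue componentwise, and since $G_k$ is a coset $g_kG_1$ of the identity component $G_1$ (choose $g_k\in G_k$ with $g_k\cdot\Lambda$ close to $\Lambda$, which is exactly the condition for the relevant component to be nonempty), it is enough to treat $k=1$ and to transport the result by the real-analytic local diffeomorphism $\Lambda'\mapsto g_k\cdot\Lambda'$. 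First I would restrict attention to $\mu_\Lambda|_{G'_{\p,1}}$.

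The first key step is to check that $\mu_\Lambda$ is real-analytic, jointly in $g$ and $\Lambda$. This is where the hypotheses enter: by assumption $G=\Aut(M)\times\Aut(M')$ is a Lie group, so the maps $(\sigma,Z)\mapsto\sigma^{-1}(Z)$ and $(\sigma',Z')\mapsto\sigma'(Z')$ are real-analytic in the group parameter and the space variable jointly (automorphisms extend holomorphically and depend analytically on the group element); moreover $\Phi_k(\Lambda)$ is of the rational form \eqref{rationalJetParam}, hence holomorphic in $Z$ and real-analytic in $\Lambda$ on $U_k$. The induced action is obtained from these data by composition and by extracting the Taylor coefficients of order at most $\tz$ at $p_k$, all real-analytic operations, so $\mu_\Lambda$ is real-analytic. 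I expect this verification to be the main obstacle, since the action on jets is defined only implicitly through $\Phi$, and one must track the analytic dependence on the Lie group parameter carefully on the open set $G'_\p$.

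The second step is the constant rank property. Because $g\cdot(h\cdot\Lambda)=(gh)\cdot\Lambda$ wherever defined, for $g$ near any fixed $g_0\in G'_{\p,1}$ one can write $\mu_\Lambda(g)=\ell_{g_0}\bigl(\mu_\Lambda(g_0^{-1}g)\bigr)$, where $\ell_{g_0}$ is the real-analytic, locally invertible action of $g_0$ near $\Lambda$; equivalently $\mu_\Lambda=\ell_{g_0}\circ\mu_\Lambda\circ L_{g_0^{-1}}$ near $g_0$, with $L_{g_0^{-1}}$ left translation. As $\ell_{g_0}$ and $L_{g_0^{-1}}$ are diffeomorphisms, the rank of $d\mu_\Lambda$ at $g_0$ equals its rank $s$ at the identity, so $\mu_\Lambda$ has constant rank $s$ on $G'_{\p,1}$. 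Moreover $\mu_\Lambda$ is constant along the cosets of the stabilizer $G_\Lambda=\{g\in G_1\colon g\cdot\Lambda=\Lambda\}$, a closed (hence, by Cartan's theorem, Lie) subgroup, so it factors through an injective immersion of $G_1/G_\Lambda$. By the constant rank theorem in the real-analytic category the image $W_1:=\mu_\Lambda(O)$ of a sufficiently small neighbourhood $O$ of the identity is a real-analytic embedded submanifold of a neighbourhood $U$ of $\Lambda$, relatively closed in $U$; concretely, choosing a slice $\Sigma=\exp(\mathfrak s)$ transverse to $G_\Lambda$ at $e$, for a complement $\mathfrak s$ of the Lie algebra of $G_\Lambda$, makes $\mu_\Lambda|_\Sigma$ an embedding onto $W_1$.

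It remains to identify $W_1$ with the connected component of $\Lambda$ in $(G'_{\p,1}\cdot\Lambda)\cap U$. Here $W_1$ is connected and, by the rank theorem, relatively closed in $U$; shrinking $U$ one argues that the connected component of $\Lambda$ in the orbit intersected with $U$ cannot leave the plaque $W_1$, so it coincides with $W_1$. This is the place where homogeneity of the orbit, together with the local real-analytic structure, prevents the accumulation phenomena that would destroy embeddedness; note that globally the orbit need not be embedded, as the irrational flow on a torus shows, which is precisely why the statement is phrased in terms of the local connected component. Transporting by $\ell_{g_k}$ then yields the statement for every $k$, with $W_k=\ell_{g_k}(W_1)$. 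I would flag this precise identification of the connected component as the second delicate point, to be carried out with the real-analytic geometry tools already invoked in the paper.
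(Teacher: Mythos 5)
Your route is genuinely different from the paper's. The paper does not use the orbit map or the constant rank theorem at all: it passes to the infinitesimal action $\alpha\colon\mathfrak g\to TU$, notes that the vector fields $f_j=\alpha(e_j)$ are real-analytic on a neighbourhood $U$ of $\Lambda$ in $J^{\tz}_\p$, and invokes Nagano's theorem to obtain the local orbit $W_1$ of the family $\{f_j\}$ through $\Lambda$, which is automatically a real-analytic submanifold of $U$ with $T_\Lambda W_1=\alpha_\Lambda(\mathfrak g)$ and which contains every connected component of $G'_{\p,1}\cdot\Lambda$ that it meets. This buys two things your argument still has to work for: it never needs $\ell_{g_0}$ to be a local diffeomorphism of the ambient jet space, and the identification of the connected component is essentially built into the notion of local orbit. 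The first point matters because the induced action of \cref{def:inducedAction} is only a partially defined action: the associativity $g\cdot(h\cdot\Lambda')=(gh)\cdot\Lambda'$ and the relation $\ell_{g_0^{-1}}\circ\ell_{g_0}=\id$ rest on the reproducing identity $\Phi_k(\cdot,j^{\tz}_{p_k}F)=F$, which holds only for $F\in\mathcal F$, i.e.\ only for jets on the analytic set $A$. Since $A$ is not a manifold, this does not yield invertibility of $d\ell_{g_0}$ at $\Lambda$ on all of $J^{\tz}_\p$, and your computation $\rank d\mu_\Lambda(g_0)=\rank d\mu_\Lambda(e)$ needs exactly that. This is likely repairable, but as written it is a gap, and it is precisely the difficulty the Nagano argument bypasses.

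The second, more concrete problem is your treatment of $k\geq 2$. Choosing $g_k\in G_k$ with $g_k\cdot\Lambda$ merely close to $\Lambda$ and transporting $W_1$ by $\ell_{g_k}$ produces a submanifold through $g_k\cdot\Lambda$, not through $\Lambda$, so it cannot be the connected component of $G'_{\p,k}\cdot\Lambda$ \emph{containing $\Lambda$}, which is what the lemma asserts. The paper instead takes $g_k$ in the intersection of the stabilizer of $\Lambda$ with $G'_{\p,k}$ (if no such element exists, the component containing $\Lambda$ is empty and there is nothing to prove) and pushes forward $W_1$ and $\mathfrak g$ by it. Finally, the step you flag at the end --- that the component of $(G'_{\p,1}\cdot\Lambda)\cap U$ containing $\Lambda$ is the single plaque $W_1$ and does not cohere with other plaques of a possibly non-embedded orbit --- does need to be carried out; in the paper it is absorbed into the statement that any connected component of the orbit is contained in a single Nagano local orbit. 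In short: the strategy is workable, but the local-action subtleties and the $k\geq 2$ case must be fixed before this is a proof.
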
	

\begin{proof}
%By $\widetilde G'_\p$ we denote the subset of $G'_\p$ satisfying $\widetilde G'_\p \cdot \Lambda\subset \widetilde C$. Note that $\widetilde G'_\p$ is open and contains a neighborhood of the identity in $G$.
Since a neighborhood of the identity in $G$ acts on $U$ it makes sense to define an infinitesimal action $\alpha : \mathfrak g \rightarrow TU$. Let $n_0 = \dim G$ and $e_1, \ldots, e_{n_0}$ be a set of generators of $\mathfrak g$. Then each $f_j \coloneqq \alpha(e_j) \in TU$ for $1 \leq j \leq n_0$ is a real-analytic vector field over $U$. By Sussman's and Nagano's theorem, the local orbit $W_1$ (in the sense of \cite[\S3.3]{BERbook}) of $\{f_j:  1 \leq j \leq n_0\}$ through $\Lambda$ is a real-analytic submanifold of $U$ such that $T_\Lambda W_1 = \alpha_\Lambda(\mathfrak g)$.  Then any connected component of $G'_{\p,1}\cdot \Lambda$ is contained in a local orbit; in particular there is only one such connected component containing $\Lambda$, namely $W_1$. To conclude the proof of the Lemma, we repeat the same argument by considering the pushforward of $W_1$ and $\mathfrak g$ by any choice of elements $g_2,\ldots, g_\ell$ which belong to the intersection of the stabilizer of $\Lambda$ with $G'_{\p,2},\ldots,G'_{\p,\ell}$ respectively.
\end{proof}

The general strategy in proving local rigidity results will be to work with jets of maps rather than actual maps, by using the parametrization properties given in \cref{def:jetparam} and \cref{def:linjetparamgen}. What makes the arguments work is the fact that the jet parametrization commutes with the action of the group $G$ on the space of maps and on the space of jets, as was remarked in \cite[Lemma 19]{dSLR15a} (note that the second part in the proof of the lemma can be applied in this situation as long as it makes sense to apply automorphisms to elements in the jet space; the definition of $G'_\p$ was given in such a way that the same proof works.)
%note that lemma is stated for the parametrization around a single point $p\in M$, but it is straightforward to generalize that to our setting where $\mathbf p=(p_1,\ldots,p_m)$).
An important consequence of the equivariance under the $G$-action is the following

%\begin{lemma}
%	\label{lem:equivariance}
%	Equivariance of the jet parametrization under $G$-action 
%\end{lemma}

\begin{lemma}
	\label{cor:jetcontain} Let $M \subset \C^N$ and $M' \subset \C^{N'}$ be real-analytic submanifolds for which $\mathcal F \subset \mathcal H (M,M')$ satisfies the GJPP. 
Let us fix $H \in \mathcal F$. For $F \in \mathcal F$  we have that $j_{\p}^{\tz} F \in G'_\p \cdot j_{\p}^{\tz} H$ if and only if $F \in G  \cdot H$.
\end{lemma}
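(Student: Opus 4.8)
The plan is to read the lemma off from two features already encoded in the GJPP: the \emph{equivariance} of the jet parametrization, i.e.\ the fact that $j_\p^{\tz}$ intertwines the $G$-action on maps with the induced action on jets (this is the mechanism of \cite[Lemma~19]{dSLR15a}, which as noted before the statement applies verbatim once one restricts to the set $G'_\p$ on which the induced action is defined), and the \emph{injectivity} of $j_\p^{\tz}$ on reproduced maps. The latter is precisely the content of \cref{rem:diagram}: for $F\in\mathcal F\cap\mathcal N$ one has $\Phi_k(j_{p_k}^{\tz}F)=F$ near $p_k$, so $F$ is recovered on the whole cover $\{V_k\}$ from its $\tz$-jets at $\p$. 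The equivariance itself is definitional: for $g=(\sigma,\sigma')\in G'_{p_k}$, using $\Phi_k(\Lambda_k)=H$ near $p_k$ with $\Lambda_k=j_{p_k}^{\tz}H$,
\[ g\cdot\Lambda_k=j_{p_k}^{\tz}\!\left(g\cdot\Phi_k(\Lambda_k)\right)=j_{p_k}^{\tz}\!\left(\sigma'\circ H\circ\sigma^{-1}\right)=j_{p_k}^{\tz}(g\cdot H), \]
the middle step being legitimate exactly because $g\in G'_{p_k}$ guarantees $p_k\in\sigma(W_k)$. Hence $g\cdot j_\p^{\tz}H=j_\p^{\tz}(g\cdot H)$ for every $g\in G'_\p$.

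For the implication ($\Rightarrow$), suppose $j_\p^{\tz}F=g\cdot j_\p^{\tz}H$ for some $g\in G'_\p$. The displayed equivariance turns this into $j_\p^{\tz}F=j_\p^{\tz}(g\cdot H)$, so $F$ and $g\cdot H$ carry the same $\tz$-jet at every $p_k$. Both are reproduced by $\Phi$ on $\{V_k\}$ ($F$ since $F\in\mathcal F\cap\mathcal N$, and $g\cdot H$ since its jets equal those of $F$ and, for $F$ near $H$, it again falls in the reproducing neighbourhood), so $F|_{V_k}=\Phi_k(j_{p_k}^{\tz}F)=(g\cdot H)|_{V_k}$ for each $k$. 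As the $V_k$ cover $M$ this yields $F=g\cdot H$ on all of $M$, and since $M$ is generic the two holomorphic extensions agree near $M$; thus $F\in G\cdot H$.

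For ($\Leftarrow$), write $F=g\cdot H$ with $g\in G$. If $g\in G'_\p$ the equivariance identity gives directly $j_\p^{\tz}F=g\cdot j_\p^{\tz}H\in G'_\p\cdot j_\p^{\tz}H$. The one genuine difficulty is the case $g\notin G'_\p$, where the induced action $g\cdot j_\p^{\tz}H$ is not even defined and one must instead exhibit a \emph{different} $g'\in G'_\p$ with $j_\p^{\tz}(g'\cdot H)=j_\p^{\tz}F$; by injectivity this amounts to finding $g'\in G'_\p$ with $g'\cdot H=F$, i.e.\ to meeting the coset $g\cdot\mathrm{Stab}(H)$ with $G'_\p$. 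I would resolve this by localisation: it suffices to prove the equivalence for $F$ in a small neighbourhood of $H$, and for such $F$ the connecting element can always be taken in $G'_\p$. Indeed, $G'_\p$ contains a neighbourhood of the identity, and \cref{lem:analyticOrbit} shows that the local $G'_\p$-orbit of $\Lambda=j_\p^{\tz}H$ is an embedded real-analytic submanifold; its embeddedness prevents the global orbit from re-entering a neighbourhood of $\Lambda$ through elements outside $G'_\p\cdot\mathrm{Stab}(H)$, so for $F$ close to $H$ we may replace $g$ by some $g'\in G'_\p$ with $g'\cdot H=F$ and reduce to the first case.

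The two implications are thus a formal consequence of equivariance plus the reproducing property, and I expect the only delicate point to be precisely this reconciliation in ($\Leftarrow$) of the global orbit $G\cdot H$ with the locally defined action of $G'_\p$; the remaining steps---the definitional equivariance identity, the reproducing property of $\Phi$, and the passage from agreement on the cover $\{V_k\}$ to agreement of germs along the generic manifold $M$---are routine.
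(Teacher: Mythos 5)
Your implication ($\Rightarrow$) is exactly the paper's proof: the equivariance identity is [dSLR15a, Lemma~19] combined with \cref{def:jetparam}, and the conclusion follows from the reproducing property. One cosmetic difference: you recover $g\cdot H$ from its jet via $\Phi$, which presupposes $g\cdot H\in\mathcal N$, whereas the paper applies equivariance to $\Phi$ itself, writing $F(Z)=\Phi(Z,j_\p^{\tz}F)=\Phi(Z,g\cdot j_\p^{\tz}H)=g\cdot\Phi(Z,j_\p^{\tz}H)=g\cdot H(Z)$, so that only $F\in\mathcal N$ is needed. Be aware that the paper's written proof consists of precisely this one direction and says nothing about the converse.

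The gap is in your ($\Leftarrow$). You correctly isolate the genuine difficulty --- when $F=g\cdot H$ with $g\notin G'_\p$ one must produce $g'\in G'_\p$ lying in the coset $g\cdot\mathrm{Stab}(H)$ --- but the mechanism you invoke does not deliver this. \cref{lem:analyticOrbit} gives embeddedness only of the connected component through $\Lambda$ of the orbit under the locally defined $G'_\p$-action, and embeddedness of that local leaf in no way prevents the full orbit from re-entering every neighbourhood of $\Lambda$ via group elements far outside $G'_\p$. The standard counterexample to your inference is a one-parameter subgroup with dense (irrational-flow type) orbit: every local orbit piece is an embedded arc, yet the orbit returns arbitrarily close to the base point only through large group elements, so ``close in the orbit'' does not yield ``reachable by an element of $G'_\p$.'' Hence the sentence asserting that embeddedness ``prevents the global orbit from re-entering a neighbourhood of $\Lambda$ through elements outside $G'_\p\cdot\mathrm{Stab}(H)$'' is a non sequitur, and the reduction of a general $g\in G$ to some $g'\in G'_\p$ remains unproved. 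Any honest argument here has to use the specific structure of $G'_{p_k}$ from \cref{def:inducedAction} (the condition $p_k\in\sigma(W_k)$ constrains only the finitely many points $\sigma^{-1}(p_k)$ and places no constraint at all on $\sigma'$) to show the coset actually meets $G'_\p$ --- or else one should note that, like the paper's own proof, only the implication ($\Rightarrow$) has genuinely been established.
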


\begin{proof}
By assumption there exists $g \in G'_\p$ such that $j_{\p}^{\tz} F = g \cdot j_{\p}^{\tz} H$. Lemma 19 of \cite{dSLR15a} and \cref{def:jetparam} imply that 
\[F(Z) = \Phi(Z,j_{\p}^{\tz} F) = \Phi(Z,g \cdot j_{\p}^{\tz} H) = g \cdot \Phi(Z, j_{\p}^{\tz} H) = g \cdot H(Z), \]
 for all $Z \in U$. %By analytic continuation and the connectedness of $M$ the conclusion follows.
\end{proof}

\section{Necessary and sufficient infinitesimal conditions for local rigidity}\label{sec:mainresults}

The higher order infinitesimal deformations introduced in \cref{rem:reformulate} can be used to provide a characterization of local rigidity. More precisely we can state the following result:

\begin{theorem}\label{th:moreprecisely}
Let $M \subset \C^N$ and $M' \subset \C^{N'}$ be real-analytic submanifolds for which $\mathcal F \subset \mathcal H (M,M')$ satisfies the GJPP, and let 
$H_0\in \mathcal F$. 
Then 
there exists a neighbourhood ${\mathcal U}_{H_0}$ and a function $j\mapsto \ell(j)$ 
such that for $H\in {\mathcal U}_{H_0}$
the following hold: 
\begin{itemize}
\item[(a)] $\bigcap_{k\geq j} \hol^k_j(H)=\hol^{\ell(j)}_j(H)$ for all $j\in \mathbb N$;
\item[(b)] Assume in addition that $G$ is a Lie group with finitely many connected components. Then $H$ is locally rigid if and only if $\aut^j(H)=\hol^{\ell(j)}_j(H)$ for all $j\in \mathbb N$.
\end{itemize}
\end{theorem}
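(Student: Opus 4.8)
The plan is to transport the whole problem into the finite–dimensional jet space via the GJPP and to study the real–analytic set $A=j^{\tz}_{\p}(\mathcal N\cap\mathcal F)$ of \cref{def:jetparam} together with the induced $G'_\p$–action of \cref{def:inducedAction}. Writing $\Lambda=j^{\tz}_{\p}H$, the orbit $\mathcal O=G'_\p\cdot\Lambda$ is, by \cref{lem:analyticOrbit}, a finite union of real–analytic submanifolds through $\Lambda$, and by \cref{cor:jetcontain} a map $F\in\mathcal F$ close to $H$ satisfies $F\in G\cdot H$ if and only if $j^{\tz}_\p F\in\mathcal O$. Since $\Phi$ reconstructs maps from their jets, local rigidity of $H$ is equivalent to the statement that $A\cap\mathcal O$ contains a neighbourhood of $\Lambda$ in $A$. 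Throughout we use the defining equations $r_{k,i}(H,\Lambda,\bar\Lambda)=0$ for $\hol^k(H)$ furnished by \cref{def:linjetparamgen}, regarding $H\in{\mathcal U}_{H_0}$ as a parameter.

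For (a), I would fix $\ell$ to be the bound $j\mapsto\ell(j)$ provided by Wavrik's strong form of Artin's approximation theorem, applied \emph{with parameters} to the above system: there is a single neighbourhood ${\mathcal U}_{H_0}$ and a function $\ell$ such that, for every $H\in{\mathcal U}_{H_0}$ and every $j$, any formal curve $\Lambda+\sum_{s\ge 1}\Lambda_{s}t^{s}$ solving the system modulo $t^{\ell(j)+1}$ can be corrected to a genuine formal solution agreeing with it modulo $t^{j+1}$. The inclusion $\bigcap_{k\ge j}\hol^k_j(H)\subseteq\hol^{\ell(j)}_j(H)$ is trivial. Conversely, an element of $\hol^{\ell(j)}_j(H)$ is the $j$–truncation of some $(\Lambda_1,\dots,\Lambda_{\ell(j)})\in\hol^{\ell(j)}(H)$, that is, of an approximate solution modulo $t^{\ell(j)+1}$; by the strong approximation bound it is the $j$–truncation of a formal solution, whose truncations to every order $k$ lie in $\hol^k(H)$. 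Hence it belongs to $\hol^k_j(H)$ for all $k\ge j$, giving the reverse inclusion and proving (a). The genuinely technical point here is the uniformity of $\ell$ over the parameter $H$, which is exactly what the parametrised version supplies.

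Assume now that $G$ has finitely many connected components and that $\aut^j(H)=\hol^{\ell(j)}_j(H)$ for all $j$; we show $H$ is locally rigid. If not, then by the reformulation above $\Lambda$ lies in the closure of the semianalytic set $A\setminus\mathcal O$, so the curve selection lemma of real–analytic geometry yields a real–analytic arc $\gamma(t)\in A$ with $\gamma(0)=\Lambda$ and $\gamma(t)\notin\mathcal O$ for small $t\neq 0$. Setting $H(t)=\Phi(\cdot,\gamma(t))\in\mathcal F\subset\mathcal H(M,M')$, each $H(t)$ maps $M$ into $M'$, so $H(t)\in\parcur{k}$ for every $k$; by \cref{rem:reformulate} the Taylor coefficients $V_\nu=\frac1{\nu!}\,\partial_t^\nu H(t)|_{0}$ satisfy $(V_1,\dots,V_j)\in\hol^k_j(H)$ for all $k\ge j$. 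By (a) and the hypothesis, $(V_1,\dots,V_j)\in\bigcap_{k\ge j}\hol^k_j(H)=\aut^j(H)$, so $\gamma$ agrees to order $j$ with a curve lying in one of the finitely many analytic pieces of $\mathcal O$. As there are finitely many such pieces, one of them, say $W_*$ with defining functions $\phi$, is matched for arbitrarily large $j$; thus $\phi(\gamma(t))=O(t^{j+1})$ for all $j$, whence $\phi(\gamma(t))\equiv0$ and $\gamma(t)\in\mathcal O$ for small $t$, a contradiction.

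For the converse implication of (b), note first that $\aut^j(H)=\pi_j(\aut^k(H))\subseteq\hol^k_j(H)$ for all $k\ge j$, whence $\aut^j(H)\subseteq\hol^{\ell(j)}_j(H)$. Assume $H$ is locally rigid but some $(V_1,\dots,V_j)\in\hol^{\ell(j)}_j(H)\setminus\aut^j(H)$. As in (a) this $j$–jet extends to a formal solution of the system, and Artin's approximation theorem produces a convergent arc $\gamma(t)\in A$ whose $j$–jet equals $(V_1,\dots,V_j)$; the family $H(t)=\Phi(\cdot,\gamma(t))\in\mathcal H(M,M')$ then converges to $H$. If $\gamma$ were contained in $\mathcal O$ near $\Lambda$ it would be of the form $g(t)\cdot\Lambda$ with $g(0)$ stabilising $\Lambda$, forcing its $j$–jet into $\aut^j(H)$ by \cref{def:autk}; since this is excluded, $\gamma$ is not contained in the analytic set $\mathcal O$, hence $\gamma(t)\notin\mathcal O$ for small $t\neq0$. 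By \cref{cor:jetcontain} this means $H(t)\notin G\cdot H$ for arbitrarily small $t$, contradicting local rigidity and establishing $\hol^{\ell(j)}_j(H)=\aut^j(H)$. The main obstacle throughout is securing the single, parameter–uniform bound $\ell$ of part (a); once it is in place, both implications of (b) follow by curve selection, Artin approximation, and the identity principle for real–analytic functions.
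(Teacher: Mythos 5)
Your proposal is correct and follows essentially the same route as the paper: part (a) via Wavrik's strong approximation theorem applied to the defining system of the jet-space set $A$ (together with Artin's theorem), the sufficiency in (b) via a curve-selection argument producing an analytic arc in $A$ tangent to infinite order to one of the finitely many analytic orbit pieces of \cref{lem:analyticOrbit}, and the necessity in (b) by converting a nontrivial element of $\hol^{\ell(j)}_j(H)$ into an actual curve of maps not coming from the $G$-action. The only differences are presentational (the paper proves its own curve-selection statement via Hironaka resolution in \cref{lem:hironaka}, and works with the system $\{c^s_i=0\}$ of \eqref{defEquationJetParam} rather than the equivalent expanded system $r_{k,i}$).
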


In the following theorem we will prove at the same time \cref{th:moreprecisely} (a) and the necessary condition for local rigidity in \cref{th:moreprecisely} (b):

\begin{theorem}\label{th:necessary} Let $M \subset \C^N$ and $M' \subset \C^{N'}$ be real-analytic submanifolds for which $\mathcal F \subset \mathcal H (M,M')$ satisfies the GJPP. Let $H_0\in\mathcal F$.
Then 
there exists a neighbourhood ${\mathcal U}_{H_0}$ and a function $j\mapsto \ell(j)$ 
such that for $H\in {\mathcal U}_{H_0}$
Then for each $H_0\in \mathcal F$ 
there exists a neighbourhood ${\mathcal U}_{H_0}$ and a function $j\mapsto \ell(j)$ 
such that $H\in {\mathcal U}_{H_0}$ we have $\bigcap_{k\geq j} \hol^k_j(H)=\hol^{\ell(j)}_j(H)$. Furthermore, if $H$ is locally rigid then for all $j \in \N$ we have $\aut^j(H) = \hol^{\ell(j)}_j(H)$.	
\end{theorem}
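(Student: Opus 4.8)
The plan is to transport both assertions into the finite-dimensional jet space furnished by the GJPP and to reduce them to the study of analytic curves on the real-analytic set $A$. Fix $H_0$ and the data $p_1,\dots,p_m$, the parametrization $\Phi$ and the real polynomials $c_i$ cutting out $A=j_\p^{\tz}(\mathcal N\cap\mathcal F)$ as in \cref{def:jetparam}; for $H$ in a neighbourhood ${\mathcal U}_{H_0}\subset\mathcal N\cap\mathcal F$ write $\Lambda_H=j_\p^{\tz}H\in A$. Using \cref{contain} together with the jet parametrization for infinitesimal deformations of \cref{def:linjetparamgen}, I would first identify an element of $\hol^k(H)$ with a polynomial curve $\Lambda(t)=\Lambda_H+\sum_{\ell=1}^{k}\Lambda_\ell t^\ell$ in $J_\p^{\tz}$ which is tangent to $A$ to order $k$, namely $c_i(\Lambda(t),\bar\Lambda(t))=O(t^{k+1})$ for all $i$; under this identification $\hol^k_j(H)$ is the set of initial segments $(\Lambda_1,\dots,\Lambda_j)$ of such curves, and the family $\{\hol^k_j(H)\}_{k\ge j}$ is decreasing, as already noted.

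For (a) I would characterise $\bigcap_{k\ge j}\hol^k_j(H)$ as the set of those $(\Lambda_1,\dots,\Lambda_j)$ that extend to an \emph{exact} convergent curve $\hat\Lambda(t)$ on $A$ with $c_i(\hat\Lambda(t),\bar{\hat\Lambda}(t))\equiv0$: truncating such a solution at any order $k$ produces an order-$k$ approximate solution, so an exact solution extending a given $j$-jet lies in every $\hol^k_j(H)$. The content is therefore the converse stabilization, and this is where the strong approximation theorem enters. I would set up $c_i\big(\Lambda_H+\sum_{\ell\ge1}\Lambda_\ell t^\ell,\overline{(\cdots)}\big)=0$ as an analytic system in the unknown coefficients $\Lambda_\ell$ and the single variable $t$, depending analytically on the base point $\Lambda_H$ (equivalently on $H\in{\mathcal U}_{H_0}$), and apply Wavrik's uniform version of Artin's approximation theorem. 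This yields a single function $j\mapsto\ell(j)=\beta(j)$, valid throughout ${\mathcal U}_{H_0}$, with the property that any initial segment extending to an approximate solution of order $\ell(j)$ already extends to an exact convergent solution agreeing with it to order $j$. Since $\bigcap_{k\ge j}\hol^k_j(H)\subset\hol^{\ell(j)}_j(H)$ holds trivially, this gives the reverse inclusion and hence $\bigcap_{k\ge j}\hol^k_j(H)=\hol^{\ell(j)}_j(H)$.

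For the necessary condition, suppose $H\in{\mathcal U}_{H_0}$ is locally rigid and take $V=(V_1,\dots,V_j)\in\hol^{\ell(j)}_j(H)$. I note first that $\aut^j(H)\subset\hol^{\ell(j)}_j(H)$ always holds, since the curve $g(t)\cdot H$ remains in $\mathcal H(M,M')$ and thus defines an element of $\hol^k(H)$ for every $k$ (cf.\ \cref{def:autk}). For the reverse inclusion I would use the convergent extension from (a): it gives a curve $\Lambda(t)\in A$ with $\Lambda(0)=\Lambda_H$ whose $j$-jet corresponds to $V$, and the associated maps $H(t)=\Phi(\cdot,\Lambda(t))\in\mathcal F$ lie near $H$ for small $t$. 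Local rigidity forces $H(t)\in G\cdot H$, whence $\Lambda(t)\in G'_\p\cdot\Lambda_H$ by \cref{cor:jetcontain}. By \cref{lem:analyticOrbit} this orbit is, near $\Lambda_H$, a real-analytic submanifold $W$ onto which $g\mapsto g\cdot\Lambda_H$ restricts to a real-analytic submersion; choosing a local real-analytic section through the identity, I would lift $\Lambda(t)$ to a curve $g(t)\in G'_\p$ with $g(0)=\mathrm{id}$ and $g(t)\cdot\Lambda_H=\Lambda(t)$. Then $g(0)\cdot H=H$, and by injectivity of the jet parametrization on curves (cf.\ \cref{rem:diagram}) the order-$j$ jets in $t$ of $g(t)\cdot H$ and of $\tau_j(V)$ coincide, so $V=\tau_j^{-1}(\pi_j(g(t)\cdot H))\in\aut^j(H)$, completing the argument.

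The main obstacle is the \emph{uniform} stabilization in (a): producing one function $\ell(j)$ valid for all $H\in{\mathcal U}_{H_0}$ rather than a bound depending on the individual map. This requires phrasing the tangency equations as a genuine analytic family in the base point and invoking the parametrized strong form of Artin approximation due to Wavrik, whose hypotheses must be checked in the real-analytic setting — either over $\R$ in the real and imaginary parts of $\Lambda$, or after decoupling $\Lambda$ and $\bar\Lambda$ and reimposing the reality constraint. A secondary technical point is the real-analytic lifting of $\Lambda(t)$ through the orbit map with the normalization $g(0)=\mathrm{id}$, which rests on the submersion property recorded in \cref{lem:analyticOrbit}.
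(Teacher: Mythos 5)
Your argument is correct and follows essentially the same route as the paper: both parts are transported into the jet space via the GJPP, the stabilization $\bigcap_{k\ge j}\hol^k_j(H)=\hol^{\ell(j)}_j(H)$ is obtained by applying Wavrik's strong approximation theorem (together with Artin) to the fixed system $\{c^k_i=0\}$ cutting out $A$ — which is why $\ell(j)$ is automatically uniform over ${\mathcal U}_{H_0}$, the base point $\Lambda_H$ being just the constant term of the unknown curve — and the exact curve is pulled back through $\Phi$ and identified with the original jet using the injectivity of $j_{\p}^{\tz}\circ r_V\circ\tau_{j}$ on $\hol^j(H)$. The only difference is presentational: for the necessity statement the paper argues by contradiction and treats the passage from ``$\widehat H(t)\in G\cdot H$ for all small $t$'' to ``$\pi_j\widehat H(t)\in\aut^j(H)$'' as immediate from \cref{def:autk}, whereas you justify this step explicitly by lifting $\Lambda(t)$ through the orbit map via a local real-analytic section furnished by \cref{lem:analyticOrbit} — a legitimate and arguably more careful rendering of the same idea.
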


\begin{proof}
 Let $j_0\in \mathbb N$, 
  and let $\ell=\ell(j_0)$ be the integer given by Wavrik's Theorem (see \cite{Wavrik75}) applied to the system of equations $\{c^s_i=0\}$ appearing in \eqref{defEquationJetParam}. Let $X' \in \hol^{\ell}_{j_0}(H) = \pi_{j_0} \hol^{\ell}(H)$, i.e. there exists $X \in \hol^{\ell}(H)$ such that $X' = \pi_{j_0}(X)$. Write $X=(X_1,\ldots,X_\ell)$; by \cref{rem:reformulate} we have that the curve of maps $\widetilde H(t)=H+X_1t+\ldots +X_\ell t^\ell$ belongs to $\parcur{\ell}$.

Write now $j_{\p}^{\tz}X=(\Lambda_1,\ldots,\Lambda_\ell)\in (J_{\p}^{\tz})^\ell$. Then we can define a curve $\Lambda(t)\subset J_{\p}^{\tz}$ as
\[\Lambda(t) = j_{\p}^{\tz}H + \Lambda_1 t +\ldots + \Lambda_\ell t^\ell;\]
note that $\pi_\ell (\Lambda(t))= j_{\p}^{\tz}X$ and $(j_{\p}^{\tz} \circ r_V)(\widetilde H(t))=\Lambda(t)$. Furthermore, write $\Lambda'(t) = j_{\p}^{\tz}H + \Lambda_1 t +\ldots+\Lambda_{j_0} t^{j_0}$. From \cref{def:jetparam} and \eqref{defEquationJetParamcurves} we have that $\Lambda(t)$ satisfies the system of analytic equations $\{c^s_i=0\}$ up to order $\ell$ in $t$. Applying now Wavrik's theorem and Artin's Approximation Theorem (see \cite{Artin68}), we obtain an analytic curve $\widehat \Lambda(t)\subset J_{\p}^{\tz}$ which satisfies the system $\{c^s_i=0\}$ -- that is, $\widehat \Lambda(t)\in A$ for all $t$ -- and coincides with $\Lambda'(t)$ up to order $j_0$ in $t$.

Define $\widehat H(\cdot,t)=\Phi(\cdot,\widehat \Lambda(t))$. By \cref{def:jetparam}, $\widehat H(t)\in \mathcal F$ for all $t$ and $\widehat H(0)=H$. Let $X''=\pi_{j_0}(\widehat H(t))\in \hol^{j_0}(H)$. We claim that $X''=X'$. Indeed, by using the commutativity of the diagram below, we have that $j_{\p}^{\tz}\circ r_V \circ \tau_{j_0}(X'')=j_{\p}^{\tz}\circ r_V\circ \tau_{j_0}(\pi_{j_0}\widehat H(t))= \pi_{j_0}\circ  j_{\p}^{\tz}\circ r_V(\widehat H(t))=\Lambda'(t)= j_{\p}^{\tz}\circ r_V \circ \tau_{j_0}(X') $:

\begin{center}

\begin{tikzcd}[column sep = 5em, row sep = 3em]
\widehat H(t)\in \mathcal F_H\{t\}\ar[hook]{r} \dar["\pi_{\ell}"] \ar[start anchor= 200, end anchor =100, bend right=50, "\pi_{j_0}" ]{ddd}& \mlnode{$\Phi(\widehat \Lambda(t))=$ \\ $\widehat H(t)\in  \holmaps_H\{t\}$}\rar[shift left, start anchor=east, end anchor=west,"j_{\p}^{\tz}\circ r_V"]  \dar["\pi_{\ell}"]& \widehat\Lambda(t)\in(J_{\p}^{\tz})_H\{t\} \lar[shift left, start anchor= west, end anchor= east, dashed, "\Phi" ] \dar["\pi_{\ell}"]  \\
 %\subset  
X\in  \hol^{\ell}(H)\ar["\tau_{\ell}"]{r} \dar["\pi_{j_0}"]
 %\ar[bend right=20, "j_{\p}^{\tz}\circ\tau_{\ell}"]{rr} 
 & \mlnode{$\tau_\ell(X)=$\\ $\widetilde H(t) \in \holmaps_H^{\ell}[t]$}  \rar[shift left, start anchor=east, end anchor=west,"j_{\p}^{\tz} \circ r_V"]   & \mlnode{$j_{\p}^{\tz}\widetilde H(t)=$ \\$\Lambda(t)\in (J_{\p}^{\tz})_H^{\ell}[t]$} \uar[start anchor = 120, end anchor = 200, bend left=30,"\rm Wavrik" style={anchor=north, rotate=-50,inner sep = 0.1cm}] \lar[shift left, start anchor= west, end anchor= east, dashed, "K" ] \dar["\pi_{j_0}"]  \\
 
\mlnode{$\pi_{j_0}(X)=$ \\ ${\mathbf X'}\in \hol^{\ell}_{j_0}(H)$}\dar[hook] \ar["j_{\p}^{\tz}\circ r_V\circ \tau_{j_0}"]{rr}& & \mlnode{$\pi_{j_0}(\Lambda(t))=$\\$\Lambda'(t)\in  (J_{\p}^{\tz})_H^{j_0}[t]$} \\

\mlnode{$\pi_{j_0}(\widehat H(t))=$ \\ $X''\in  \hol^{j_0}(H)$} \ar[start anchor = 80, end anchor = 215, bend right= 5,"j_{\p}^{\tz}\circ r_V\circ \tau_{j_0}"]{rru}&  &  
\end{tikzcd}

\end{center}
On the other hand, as observed in \cref{rem:diagram}, the restriction of $j_{\p}^{\tz}\circ r_V \circ \tau_{j_0}$ to $\hol^{j_0}(H)$ is injective by \cref{def:linjetparamgen}. It follows that $X'=X''$ as claimed. Thus we have
\[X'=\pi_{j_0} \widehat H(t)=\pi_{j_0}(\pi_k \widehat H(t))\]
for all $k\geq j_0$. Since $\pi_k \widehat H(t)\in \hol^k(H)$ it holds that $X'\in \pi_{j_0}\hol^k(H) = \hol^k_{j_0}(H)$. This shows that $X'\in \bigcap_{k \geq j_0} \hol^k_{j_0}(H)$, and since $X'$ is an arbitrary element of $\hol^{\ell(j_0)}_{j_0}(H)$ we conclude that $\hol^{\ell(j_0)}_{j_0}(H)\subset \bigcap_{k \geq j_0} \hol^k_{j_0}(H)$. Since the other inclusion is trivial, it follows that $\hol^{\ell(j_0)}_{j_0}(H)= \bigcap_{k \geq j_0} \hol^k_{j_0}(H)$.

We turn now to the second statement in the theorem. Suppose that there exists $j_0 \in \N$ such that $\aut^{j_0}(H) \neq \hol^{\ell(j_0)}_{j_0}(H)$, and let $X'\in \hol^{\ell(j_0)}_{j_0}(H)$ with $X' \not\in \aut^{j_0}(H)$. Let $\widehat H(t)$ be the curve of maps obtained by applying the argument above to $X'$. Since $X'\not\in \aut^{j_0}(H)$ and $\pi_{j_0}\widehat H(t) = X'$, we deduce that the curve of maps $\widehat H(t)$ does not come from the action of $G$ (cf.\ \cref{def:autk}). Thus, $H$ is not locally rigid, contradicting the assumptions.
\end{proof}

The statement about the sufficiency of the condition for local rigidity in \cref{th:moreprecisely} (b)  can be actually slightly refined:

\begin{theorem}\label{th:ifthereexists} Let $M \subset \C^N$ and $M' \subset \C^{N'}$ be real-analytic submanifolds for which $\mathcal F \subset \mathcal H (M,M')$ satisfies the GJPP. Let $H_0\in\mathcal F$ and suppose that $G$ is a Lie group with finitely many connected components.
Let ${\mathcal U}_{H_0}$ and $\ell(j)$ be  given in \cref{th:necessary}, and 
let $H\in {\mathcal U}_{H_0}$. If there exists $j_0 \in \N$ such that for all $j \geq j_0$ it holds that $\aut^j(H) =\hol^{\ell(j)}_j(H)$ then $H$ is locally rigid.
\end{theorem}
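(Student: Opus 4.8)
The plan is to transfer the statement to the jet space and to show, under the stated hypothesis, that the real-analytic set $A$ parametrizing $\mathcal F$ near $H$ agrees, in a neighbourhood of $\Lambda_0:=j_{\p}^{\tz}H$, with the $G'_\p$-orbit of $\Lambda_0$. By \cref{cor:jetcontain} this is exactly what local rigidity amounts to: if $A$ coincides near $\Lambda_0$ with $W:=G'_\p\cdot\Lambda_0$, then every $F\in\mathcal F$ with jet close to $\Lambda_0$ satisfies $j_{\p}^{\tz}F\in G'_\p\cdot\Lambda_0$ and hence $F\in G\cdot H$, which by the reproducing property of the GJPP yields a neighbourhood of $H$ as in \cref{locrig}. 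So the whole problem reduces to the geometric statement $A=W$ near $\Lambda_0$.

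To this end I would first interpret the two infinitesimal spaces in the hypothesis as spaces of jets of analytic arcs. On one side, by (the proof of) \cref{th:necessary} together with \cref{def:linjetparamgen} and Artin's approximation theorem \cite{Artin68}, the space $\hol^{\ell(j)}_j(H)=\bigcap_{k\geq j}\hol^k_j(H)$ is precisely the set of $j$-jets at $\Lambda_0$ of convergent analytic arcs contained in $A$: any such arc restricts, for each $k\geq j$, to a solution of the linearized equations to order $k$, while conversely the construction in \cref{th:necessary} produces, for every element of $\hol^{\ell(j)}_j(H)$, an honest analytic arc $\widehat\Lambda(t)\subset A$ realizing it. On the other side, $\aut^j(H)$ is, via \cref{def:autk} and the equivariance of the jet parametrization, the set of $j$-jets at $\Lambda_0$ of analytic arcs lying in $W$; here I use that $G$ is a Lie group, so analytic arcs in the orbit lift to analytic arcs in $G$ and conversely. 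Since $G$ has finitely many connected components, \cref{lem:analyticOrbit} shows that near $\Lambda_0$ the orbit $W$ is a \emph{finite} union of real-analytic embedded submanifolds through $\Lambda_0$. The hypothesis $\aut^j(H)=\hol^{\ell(j)}_j(H)$ for $j\geq j_0$ thus reads: for all $j\geq j_0$ the $j$-jets of analytic arcs in $A$ and the $j$-jets of analytic arcs in $W$ coincide.

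Finally I would argue by contradiction using real-analytic geometry. If $A\neq W$ near $\Lambda_0$, then $\Lambda_0\in\overline{A\setminus W}$, and since $A\setminus W$ is semianalytic the curve selection lemma provides an analytic arc $\gamma(t)\subset A$ with $\gamma(0)=\Lambda_0$ and $\gamma(t)\notin W$ for $t>0$. For each $j\geq j_0$ the $j$-jet of $\gamma$ lies in $\hol^{\ell(j)}_j(H)=\aut^j(H)$, hence equals the $j$-jet of an arc contained in one of the finitely many smooth pieces of $W$; by a pigeonhole argument one piece, say $W_i$, realizes the $j$-jet of $\gamma$ for arbitrarily large $j$, and therefore, upon truncation, for \emph{all} $j$. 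Straightening $W_i$ locally to a coordinate subspace, this says that the normal component of $\gamma$ vanishes to every order in $t$, so $\gamma(t)\in W_i\subset W$ for small $t$, contradicting $\gamma(t)\notin W$. Hence $A=W$ near $\Lambda_0$ and $H$ is locally rigid.

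I expect the main obstacle to be the precise identification of $\hol^{\ell(j)}_j(H)$ and $\aut^j(H)$ with jets of convergent arcs in $A$ and in $W$ respectively, which requires combining the injectivity and equivariance of the jet parametrization (the diagram in \cref{rem:diagram} and \cref{cor:jetcontain}) with the Artin--Wavrik machinery, together with the care needed to run the curve-selection argument when the orbit is only a finite union of submanifolds rather than a single one.
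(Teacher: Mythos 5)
Your proposal is correct and follows essentially the same route as the paper's proof: reduce to the jet space via the GJPP and \cref{cor:jetcontain}, apply a curve selection lemma (the paper's \cref{lem:hironaka}, via Hironaka resolution) to produce an analytic arc in $A$ escaping the orbit, identify its truncations with elements of $\hol^{\ell(j)}_j(H)=\aut^j(H)$, and then use the pigeonhole over the finitely many analytic pieces of the orbit from \cref{lem:analyticOrbit} to force infinite-order tangency and hence containment of the arc in the orbit, a contradiction. The only cosmetic difference is that you phrase the goal as $A=W$ near $\Lambda_0$ while the paper argues the contrapositive directly with the curve $\Gamma(\cdot,t)=\Phi(\cdot,\gamma(t))$.
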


\begin{remark}
In particular if there exists $j_0 \in \N$ such that  $\hol^j(H) = \aut^j(H)$ for all $j \geq j_0$ then the sufficient condition of \cref{th:ifthereexists} is satisfied.
\end{remark}

The following fact we need in the proof of \cref{th:ifthereexists}.

\begin{lemma}\label{lem:hironaka}
Let $A\subsetneq B\subset \mathbb R^n$ be real analytic sets, and let $p\in A$ be such that $p\in \overline {B\setminus A}$. Then there exists a real analytic curve $\gamma:[-1,1]\to\mathbb R^n$ such that $\gamma(0)=p$, $\gamma\subset B$ but $\gamma\not\subset A$.
\end{lemma}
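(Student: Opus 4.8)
The plan is to reduce the statement to the classical curve selection lemma for semianalytic sets. First I would observe that, since $A$ and $B$ are real analytic, the difference $B\setminus A = B\cap(\mathbb{R}^n\setminus A)$ is a semianalytic subset of $\mathbb{R}^n$, being the intersection of the analytic set $B$ with the open complement of the analytic set $A$. By hypothesis $p\in\overline{B\setminus A}$, so $p$ lies in the closure of this semianalytic set.

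Next I would invoke the curve selection lemma in the form valid for semianalytic sets (due to \L{}ojasiewicz and Hironaka; see also Milnor for the algebraic case): there exist $\epsilon>0$ and a real analytic map $\sigma\colon[0,\epsilon)\to\mathbb{R}^n$ with $\sigma(0)=p$ and $\sigma(t)\in B\setminus A$ for every $t\in(0,\epsilon)$. Here real analyticity on $[0,\epsilon)$ is understood as the existence of an analytic extension of $\sigma$ to a neighbourhood of $0$.

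The only remaining point is to convert this one-sided curve into a curve defined on the symmetric interval $[-1,1]$ while keeping its image inside $B$. For this I would set $\gamma(s)\coloneqq\sigma(\delta s^2)$ for $s\in[-1,1]$, choosing $\delta>0$ small enough that $\delta s^2\in[0,\epsilon)$ for all $s\in[-1,1]$. The map $\gamma$ is real analytic, being the composition of the analytic map $s\mapsto\delta s^2$ with $\sigma$. Moreover $\gamma(0)=\sigma(0)=p$; and for $s\neq 0$ we have $\delta s^2\in(0,\epsilon)$, whence $\gamma(s)=\sigma(\delta s^2)\in B\setminus A$. Therefore $\gamma([-1,1])\subset B$ (using $p\in A\subset B$ for the value at $s=0$), while $\gamma(s)\notin A$ for every $s\neq 0$, so that $\gamma\not\subset A$, as required.

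The substantive content is entirely carried by the curve selection lemma; the main (and essentially only) genuine obstacle is verifying that the semianalyticity hypothesis is correctly met so that the lemma applies. The elementary reparametrization $s\mapsto\delta s^2$ is the device that promotes the one-sided analytic curve produced by the lemma to a genuine two-sided curve on $[-1,1]$ whose image still lies in $B$, and it is harmless because it only affects the parametrization, not the membership properties we need.
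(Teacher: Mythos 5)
Your proof is correct, but it takes a genuinely different route from the paper. The paper invokes Hironaka's resolution of singularities: it desingularizes $B$ by an analytic surjection $\pi\colon\widetilde B\to B$, notes that $\widetilde A=\pi^{-1}(A)$ is a proper analytic subset of the smooth manifold $\widetilde B$, picks a point $q$ over $p$, chooses in a chart a straight segment through $q$ not contained in $\widetilde A$, and pushes it forward by $\pi$. You instead observe that $B\setminus A$ is semianalytic and apply the curve selection lemma of \L{}ojasiewicz--Hironaka at the point $p\in\overline{B\setminus A}$, then promote the resulting one-sided analytic arc to a two-sided one by the reparametrization $s\mapsto\delta s^2$. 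Both arguments rest on machinery of comparable depth (curve selection for semianalytic sets is itself usually proved via resolution or \L{}ojasiewicz's stratification theory), but your version has two small advantages: it makes completely explicit where the hypothesis $p\in\overline{B\setminus A}$ is used (the paper's argument uses it only implicitly, in guaranteeing that some preimage point $q$ of $p$ has $\widetilde A$ proper near it), and it yields the slightly stronger conclusion that $\gamma(s)\in B\setminus A$ for every $s\neq 0$, i.e.\ the curve meets $A$ only at the single parameter value $0$, whereas the paper's segment is merely not contained in $\widetilde A$. The reparametrization step is harmless exactly as you say. Either proof is acceptable here, since the application in \cref{th:ifthereexists} only needs a curve in $B$ through $p$ whose image is not contained in $A$.
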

\begin{proof}
By Hironaka's resolution of singularities \cite{Hironaka64}, there exists a regular, real analytic manifold $\widetilde B$ and an analytic surjection $\pi:\widetilde B\to B$. In particular we have that $\widetilde A=\pi^{-1}(A)\subsetneq \widetilde B$ is an analytic subset of $\widetilde B$. Let $q\in \widetilde A$ such that $\pi(q)=p$; choosing a chart $\widetilde B\supset U\to V\subset \mathbb R^m$ for $\widetilde B$ around $q$, since $\widetilde A$ is a proper analytic subset of $U$ we can consider it as a proper analytic subset of $V\subset \mathbb R^m$. Let $\widetilde \gamma:[-1,1]\to V$ be the parametrization of a straight segment such that $\widetilde \gamma(0)=q$ and $\widetilde \gamma\not\subset \widetilde A$ (such a straight segment must exist since $\widetilde A\subsetneq V$). Then $\gamma=\pi\circ\widetilde \gamma$ satisfies the conditions of the Lemma.
\end{proof}

\begin{proof}[Proof of \cref{th:ifthereexists}]
We show that if $H$ is not locally rigid then for all $j_0 \in \N$ there exists $j\geq j_0$ such that $\hol^j(H)\neq \aut^j(H)$. In the following we will refer to the notation  of \cref{def:jetparam}. What we are assuming amounts to the fact that there exists a neighborhood $V$ of $\Lambda = j_{\p}^{\tz}(r_V(H))$ such that $A\cap V$ is strictly larger than $(G'_\p\cdot \Lambda) \cap V$ (see \cref{def:inducedAction}). Therefore by \cref{lem:hironaka} there exists an analytic curve $\gamma:[-1,1]\to V$ such that $\gamma(0)=\Lambda$ and the image of $\gamma$ is contained in $A$ but is not contained in $G'_\p\cdot \Lambda$.

Consider the map $\Gamma: \mathbb C^N \times [-1,1]\to \mathbb C^{N'}$ defined by $r_V\Gamma(\cdot, t)=\Phi(\cdot,\gamma(t))$. Because of the analyticity of $\Phi$, the map $\Gamma$ is also analytic and we can write $\Gamma(Z,t)=\sum_{\ell\geq 0} \Gamma_\ell(Z) t^\ell$. Note that, since $\pi_{\ell(j)}(\Gamma(Z,t)) \in \hol^{\ell(j)}(H)$, it holds that $\pi_j(\Gamma(Z,t)) = \pi_j(\pi_{\ell(j)}(\Gamma(Z,t))) \in \pi_j(\hol^{\ell(j)}(H)) = \hol^{\ell(j)}_j(H)$.

%Let us write $\pi_k = (\frac d {d t}, \ldots, \frac {d^k} {d t^k})|_{t=0}$.

Assume by contradiction that $\pi_j(\Gamma(Z,t)) \in \aut^j(H)$ for all $j$. By \cref{def:autk} for all $j$ there exists a curve $g_j(t) \in G'_\p$ such that $\pi_j (g_j(t) \cdot H) = \pi_j(\Gamma)$. Then
\[\pi_j ( g_j(t) \cdot \Lambda) = \pi_j ( j_{\p}^{\tz}  r_V(g_j(t) \cdot H)) =  j_{\p}^{\tz}r_V\tau_j(\pi_j (g_j(t) \cdot H))=\] 
\[ =j_{\p}^{\tz}r_V\tau_j(\pi_j (\Gamma(t))) =   \pi_j  j_{\p}^{\tz} (r_V\Gamma(t))=\pi_j(\gamma(t)) \]

where we have used the commutativity of the following diagram:

\begin{center}

\begin{tikzcd}[column sep = 4em, row sep = 3em]
  \Gamma(t)\in\mathcal F_H\{t\}\ar[hook]{r} \dar[shift right=0.7cm, start anchor = south, end anchor = north,"\pi_{j}"]&   \holmaps_H\{t\} \rar["r_V"]  \dar["\pi_j"] & (\holmaps |_V)_H\{t\} \rar[shift left, start anchor=east, end anchor=west,"j_{\p}^{\tz}"]  \dar["\pi_{j}"]& \gamma(t)\in(J_{\p}^{\tz})_H\{t\} \lar[shift left, start anchor= west, end anchor= east, dashed, "\Phi" ] \dar[shift right=1cm, start anchor = south, end anchor = north,"\pi_{j}"]  \\
 %\subset  
 \pi_j (\Gamma(t))\in\hol^{j}(H)\ar["\tau_{j}"]{r} 
 %\ar[bend right=20, "j_{\p}^{\tz}\circ\tau_{\ell}"]{rr} 
 &  \holmaps_H^{j}[t] \rar["r_V"]  & (\holmaps |_V)_H[t]\rar[shift left, start anchor=east, end anchor=west,"j_{\p}^{\tz}"]   & \pi_j(\gamma(t))\in(J_{\p}^{\tz})_H^{j}[t]\lar[ shift left, start anchor= west, end anchor= east, dashed, "K" ] \\
 
   g_j(t)\cdot H\in\mathcal F_H\{t\}\ar[hook]{r} \uar[shift left=0.7cm,  start anchor = north, end anchor = south,"\pi_{j}"]&   \holmaps_H\{t\} \rar["r_V"]  \uar["\pi_j"] & (\holmaps |_V)_H\{t\} \rar[shift left, start anchor=east, end anchor=west,"j_{\p}^{\tz}"]  \uar["\pi_{j}"]& g_j(t)\cdot \Lambda\in(J_{\p}^{\tz})_H\{t\} \lar[shift left, start anchor= west, end anchor= east, dashed, "\Phi" ] \uar[shift left=1cm, start anchor = north, end anchor = south, "\pi_{j}"]  \\
\end{tikzcd}
\end{center}

This means that the curve $t \mapsto g_j(t) \cdot \Lambda\in G'_\p \cdot \Lambda$ is tangent to order $j$ to the curve $t \mapsto \gamma(t)$. With the notation of \cref{lem:analyticOrbit}, we have that for any $j\in \mathbb N$ the curve $g_j(t)$ is contained in $G'_{\p,k(j)}$ for a certain $k(j)$, and thus $g_j(t)\cdot\Lambda$ is contained in $W_{k(j)}$. Hence there exists a $\widetilde k$ such that $k(j)=\widetilde k$ for infinitely many $j\in \mathbb N$.   Consequently, for arbitrarily large $j$ such that $k(j)=\widetilde k$ the curve $t \mapsto\gamma(t)$ is tangent to order $j$ to the real-analytic submanifold $W_{\widetilde k}$, hence $\gamma\subset W_{\widetilde k}\subset G'_\p\cdot \Lambda$,
%From \cref{cor:jetcontain} it follows that $\Gamma \subset G \cdot H$, hence $\gamma(t) = j_{\p}^{\tz} r_V\Gamma(Z,t)  \in G \cdot \Lambda$ for all $t$ 
which is a contradiction.
\end{proof}

%We denote by $\holmaps(\mathcal U,\mathbb C^{N'})\{t\}$ the set of analytic curves of maps in $   \holmaps(\mathcal U,\mathbb C^{N'})$, that is, $H(t)\in \holmaps(\mathcal U,\mathbb C^{N'})\{t\}$ if and only if $H(Z, t)=\sum_{j\geq 0}H_j(Z) t^j$, where $H_j\in    \holmaps(\mathcal U,\mathbb C^{N'})$ and the series converges uniformly on a neighborhood of $M$. We denote by $\holmaps_H\{t\}$ the subset of the elements $H(t)  \in  \holmaps(\mathcal U,\mathbb C^{N'})\{t\}$ such that $H(0)=H$.

%Let $H: \C^N \rightarrow \C^{N'}$ be a holomorphic mapping. For every vector field $V = \sum_{j=1}^{N'} a_j(Z') \frac{\partial}{\partial Z_j'}$ in $\C^{N'}$ we define $V|_{H(\C^N)} = \sum_{j=1}^{N'} a_j(H(Z)) \frac{\partial}{\partial Z_j'} \in T_H (\C\{Z\})^{N'}$; the map so defined is linear and continuous. We denote by $\hol_0(M')|_{H(M)}$ the image of $\hol_0(M')$ under this map.

We will now examine more in detail the case of infinitesimal deformations of order $1$. This case has been studied in \cite{dSLR15a, dSLR15b} in the situation where $(M,p)$ and $(M',p')$ are germs of submanifolds and $H$ is a germ of CR map sending $p$ to $p'$. In what follows, we will provide a stronger version of Theorem 2 in \cite{dSLR15b} and extend the statement to the case of global maps of (compact) submanifolds. We will state our results only in the latter situation; however, the corresponding statements for germs of maps can be deduced with the same arguments, using the parametrization property for germs of maps rather than \cref{def:jetparam}.

\begin{theorem}
	\label{t:rigid}
	Let $M \subset \C^N$ and $M' \subset \C^{N'}$ be real-analytic submanifolds for which $\mathcal F \subset \mathcal H (M,M')$ satisfies the GJPP as in \cref{def:jetparam} and assume that $G$ is a Lie group. Let $H: M \rightarrow M'$ be a holomorphic mapping in $\mathcal F$ satisfying $\hol(H) = \aut(H)$, then $H$ is locally rigid.
\end{theorem}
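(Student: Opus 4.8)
The plan is to translate local rigidity into a statement about the real-analytic set $A$ furnished by the GJPP, reduce it to the coincidence of $A$ with a single group orbit, and then detect that coincidence at the level of tangent spaces, where the hypothesis $\hol(H)=\aut(H)$ enters directly.

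First I would fix the data of the GJPP at $H$: the points $\p=(p_1,\dots,p_m)$, the jet $\Lambda=j_{\p}^{\tz}(r_V(H))$, the parametrization $\Phi$, and the real-analytic set $A=j_{\p}^{\tz}(\mathcal N\cap\mathcal F)$ cut out near $\Lambda$ by the equations $c^k_i(\Lambda,\bar\Lambda)=0$ (with $q_k\neq 0$) from \eqref{defEquationJetParam}. By \cref{cor:jetcontain} combined with the reformulation of local rigidity in \cref{locrig}, the map $H$ is locally rigid if and only if $A$ agrees, in a neighbourhood of $\Lambda$, with the orbit $G'_{\p}\cdot\Lambda$. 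By \cref{lem:analyticOrbit} the connected component $W$ of $G'_{\p}\cdot\Lambda$ through $\Lambda$ is a real-analytic embedded submanifold with $T_\Lambda W=\alpha_\Lambda(\mathfrak g)$, and since $W\subset A$ near $\Lambda$ it suffices to prove $A=W$ near $\Lambda$.

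The heart of the argument is to show $T_\Lambda A=T_\Lambda W$. The inclusion $T_\Lambda W\subset T_\Lambda A$ is immediate from $W\subset A$. For the reverse inclusion I would identify the Zariski tangent space of $A$ with a space of jets of first-order deformations: differentiating $c^k_i(\Lambda(t),\bar\Lambda(t))=O(t^2)$ at $t=0$ and using the $\ell=1$ case of \cref{def:linjetparamgen} (where $r^1_{k,i}$ is precisely the linear part of $c^k_i$), the common kernel $\{r^1_{k,i}=0\}$ is exactly the image of $\hol(H)=\hol^1(H)$ under $j_{\p}^{\tz}$, and this jet map is injective by the reproducing property $V=K_k(\widetilde H,\Lambda,Z)$. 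Since each $c^k_i$ vanishes on $A$, its differential vanishes on $T_\Lambda A$, so $T_\Lambda A\subset\{r^1_{k,i}=0\}=j_{\p}^{\tz}(\hol(H))$. Invoking $\hol(H)=\aut(H)$ and the identification $j_{\p}^{\tz}(\aut(H))=\alpha_\Lambda(\mathfrak g)=T_\Lambda W$ then gives $T_\Lambda A\subset T_\Lambda W$, whence equality.

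Finally I would close with a dimension count from real-analytic geometry: since $W$ is smooth and $W\subset A$, we have $\dim W=\dim T_\Lambda W=\dim T_\Lambda A\ge \dim_\Lambda A\ge\dim W$, so all these quantities coincide. Thus $\dim_\Lambda A=\dim T_\Lambda A$, which forces $A$ to be smooth at $\Lambda$ of dimension $\dim W$; as $W\subset A$ is a smooth submanifold of the same dimension, $A=W$ in a neighbourhood of $\Lambda$, and local rigidity follows. I expect the main obstacle to be the controlled identification $T_\Lambda A\subset j_{\p}^{\tz}(\hol(H))$ — i.e. bounding the Zariski tangent space of the possibly singular set $A$ by the linearized deformation equations — together with the implication that coincidence of tangent space and local dimension forces smoothness; both require care precisely because $A$ is only real-analytic and a priori singular.
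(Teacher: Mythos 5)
Your proposal is correct and shares the paper's overall strategy---pass to the jet space via the GJPP and \cref{cor:jetcontain}, then show that $A$ coincides locally with the orbit by a dimension comparison---but the decisive step is carried out by a genuinely different mechanism. The paper never analyzes $A$ at the base jet $\Lambda_0$ itself: it bounds $\dim A\le\dim\hol(H)$ through \cref{l:manifold}, computes $\dim(G'_{\p}\cdot\Lambda_0)=\dim\aut(H)$ in \cref{l:dimension}, and then uses the stratification of semianalytic sets (\cref{r:intersect}) to find a possibly different point $\tilde\Lambda$ of the orbit lying in the top-dimensional regular locus of $A$, where two nested submanifolds of equal dimension must coincide. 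You instead work at $\Lambda$ itself: you identify $\bigcap_i\ker dc^k_i|_{\Lambda}$ with $j_{\p}^{\tz}(\hol(H))$ via the $\ell=1$ case of \cref{def:linjetparamgen} (the inclusion $j_{\p}^{\tz}(\hol(H))\subseteq\bigcap_i\ker dc^k_i|_{\Lambda}$ already follows from \eqref{defEquationJetParamcurves} applied to $H+tV$; the reverse inclusion is exactly the statement that $\{r^1_{k,i}=0\}$ parametrizes jets of actual infinitesimal deformations, granting that $r^1_{k,i}$ is the $t$-coefficient of $c^k_i$ along affine curves), and then invoke the criterion that equality of local dimension and Zariski tangent space dimension forces smoothness. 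The two real-analytic facts you flag do hold---upper semicontinuity of the corank of the family $\{dc^k_i\}$ gives $\dim_{\Lambda}A\le\dim\bigcap_i\ker dc^k_i|_{\Lambda}$, and a maximal subfamily of the $c^k_i$ with independent differentials cuts out a smooth $Y\supseteq A$ of dimension $\dim T_{\Lambda}A$ in which $A$, having the same dimension, cannot be a proper analytic subset---but both should be spelled out, since for real-analytic sets neither is as automatic as in the complex-analytic case. What your route buys is a stronger conclusion (smoothness of $A$ at $\Lambda$ and equality with the orbit there, not merely at a nearby point), at the price of needing the exact identification of the linearized defining equations with $\hol(H)$; the paper's route needs only the weaker dimension bound of \cref{l:manifold} and lets \cref{r:intersect} absorb the possible singularity of $A$ at $\Lambda_0$.
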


Note that in \cref{t:rigid} there is no assumption about the freeness and properness of the action of $G$; furthermore both $\hol(M')$ and $\hol(M)$ are involved rather than just $\hol(M')$. The assumption taken in Theorem 2 of \cite{dSLR15b}, on the other hand, imply automatically that $H_*(\hol(M))$ is contained in $\hol(M')|_{H(M)}$.

%Considering the notation of \cite[Theorem 6, section 4]{dSLR15b}, let $j \in \N$ be the index such that $\Lambda_0 = j_0^{\mathbf t k_0} H \in A_j$.

We will need the following statement, which can be deduced from the jet parametrization property from \cref{def:linjetparamgen} in the same way as \cite[Lemma 14]{dSLR15b}.

\begin{lemma}\label{l:manifold}
	Suppose that
	$\dim \hol (H) = \ell$. 
	Then there exists a neighborhood $U$ of $\Lambda_0$ in $J_{\mathbf p}^{\tz}$ such that any submanifold $X\subset A$ with $X\cap U\neq \emptyset$ satisfies $\dim(X) \leq \ell$.
\end{lemma}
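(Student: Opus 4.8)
The plan is to bound $\dim X$ by the dimension of the space of first-order infinitesimal deformations of the map parametrized by a point $\Lambda \in X \cap U$, and then to use upper semicontinuity of that dimension near $\Lambda_0 = j_{\mathbf p}^{\tz}(r_V(H))$. Throughout I write $\ell = \dim\hol(H)$ and, for $\Lambda \in A$, let $F_\Lambda \in \mathcal F$ be the unique map with $r_V(F_\Lambda) = \Phi(\cdot,\Lambda)$, which exists by the reproducing property in \cref{def:jetparam}; note that $F_{\Lambda_0} = H$.

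First I would fix $\Lambda \in X \cap U$ and show $\dim T_\Lambda X \le \dim \hol(F_\Lambda)$. Given $v \in T_\Lambda X$, choose a smooth curve $\lambda(s) \subset X$ with $\lambda(0) = \Lambda$ and $\lambda'(0) = v$. Since $X \subset A$, each $F_{\lambda(s)}$ lies in $\mathcal F \subset \mathcal H(M,M')$, so differentiating $\rho'(F_{\lambda(s)}, \overline{F_{\lambda(s)}}) = 0$ at $s = 0$ shows that $V_v := \frac{d}{ds}F_{\lambda(s)}|_{s=0}$ satisfies \eqref{eq:infdef}, i.e.\ $V_v \in \hol(F_\Lambda)$. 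By the reproducing property $j_{\mathbf p}^{\tz}(r_V F_{\lambda(s)}) = \lambda(s)$, hence $j_{\mathbf p}^{\tz}(r_V V_v) = v$. Thus $v \mapsto V_v$ is linear from $T_\Lambda X$ into $\hol(F_\Lambda)$, and composing with the jet map returns $v$; since the jet map is injective on $\hol(F_\Lambda)$ by the uniqueness part of \cref{def:linjetparamgen} (with $\ell = 1$), the map $v \mapsto V_v$ is injective and $\dim X = \dim T_\Lambda X \le \dim\hol(F_\Lambda)$.

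Next I would prove that $\Lambda \mapsto \dim\hol(F_\Lambda)$ is upper semicontinuous. By \cref{def:linjetparamgen} with $\ell = 1$, the injective jet map identifies $\hol(F_\Lambda)$ with the solution set $\{v : r^1_{k,i}(F_\Lambda, v, \bar v) = 0 \text{ for all } k,i\}$ of a real-linear system on the finite-dimensional space $J_{\mathbf p}^{\tz}$, whose coefficients vary continuously with $\Lambda$. Kernel dimensions of a continuously varying linear system are upper semicontinuous, so there is a neighborhood $U$ of $\Lambda_0$ with $\dim\hol(F_\Lambda) \le \dim\hol(F_{\Lambda_0}) = \dim\hol(H) = \ell$ for all $\Lambda \in U$. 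Combining the two steps, any submanifold $X \subset A$ meeting $U$ satisfies $\dim X \le \ell$, as claimed.

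The main obstacle is the hidden compatibility underlying the first step: that a tangent vector to the real-analytic set $A$ — a priori only a vector annihilating the differentials of the defining equations $\{c^k_i = 0\}$ — is genuinely realized by an honest infinitesimal deformation carrying the prescribed jet. This is exactly the point running parallel to \cite[Lemma 14]{dSLR15b}, and it is resolved here by working with curves inside $A$ and pushing them forward through $\Phi$, together with the injectivity furnished by \cref{def:linjetparamgen}.
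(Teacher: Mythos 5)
Your proof is correct and follows essentially the argument the paper intends: the paper does not write out a proof but refers to \cite[Lemma 14]{dSLR15b}, whose argument is precisely your two steps — realizing tangent vectors of a submanifold of $A$ as jets of genuine infinitesimal deformations via curves pushed through $\Phi$, and then using the real-linear system furnished by \cref{def:linjetparamgen} (with continuously varying coefficients) to get upper semicontinuity of $\dim\hol(F_\Lambda)$ near $\Lambda_0$. The only cosmetic point is that $v\mapsto V_v$ need not be a well-defined linear map (it depends on the chosen curve); what you actually use, and what suffices, is the inclusion $T_\Lambda X\subseteq (j_{\mathbf p}^{\tz}\circ r_V)(\hol(F_\Lambda))$.
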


For any $\Lambda \in J_{\p}^{\tz}$ close enough to $j_{\p}^{\tz}(H)$ and $F \in \mathcal F$ with $\Lambda = j_{\p}^{\tz}F$ we define $a_{\Lambda}: G'_\p \rightarrow J_{\p}^{\tz}$  resp. $b_F: G \rightarrow \mathcal F$ as $a_{\Lambda}(g) = g \cdot \Lambda$ resp. $b_F(g) = g \cdot F$. The \emph{infinitesimal action of $G$ at $\Lambda$} resp. $F$ is the differential of $a_{\Lambda}$ resp. $b_F$ at the identity $\id \in G$,  which we denote by $\alpha_{\Lambda}: \mathfrak g \rightarrow T_{\Lambda} (J_{\p}^{\tz})$ resp. $\beta_F: \mathfrak g \rightarrow T_F \mathcal F$.

\begin{lemma}\label{l:dimension}
	Under the assumption of \cref{t:rigid}, $G'_\p \cdot \Lambda_0$ is an immersed submanifold of $A$ of dimension $\ell_0 = \dim (\aut(H))$.
\end{lemma}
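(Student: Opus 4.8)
The plan is to obtain the dimension by comparing two infinitesimal actions: the action $\alpha_{\Lambda_0}\colon \mathfrak g \to T_{\Lambda_0}(J_\p^{\tz})$ on jets and the action $\beta_H\colon \mathfrak g \to T_H\mathcal F$ on maps, whose image is by definition $\aut(H)$. The orbit $G'_\p\cdot\Lambda_0$ will be shown to be immersed with tangent space $\alpha_{\Lambda_0}(\mathfrak g)$, and the jet parametrization will be used to identify $\alpha_{\Lambda_0}(\mathfrak g)$ with $\aut(H)$.

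First I would record that $G'_\p\cdot\Lambda_0$ is an immersed real-analytic submanifold of $A$. By \cref{lem:analyticOrbit}, each connected component of $G'_{\p,k}\cdot\Lambda_0$ is an embedded real-analytic submanifold $W_k$ with $T_{\Lambda_0}W_1 = \alpha_{\Lambda_0}(\mathfrak g)$, and the full orbit is the union of the finitely many such pieces; it is therefore immersed and
\[ \dim(G'_\p\cdot\Lambda_0) = \dim \alpha_{\Lambda_0}(\mathfrak g) = \rank \alpha_{\Lambda_0}. \]
It lies in $A$ because each point $g\cdot\Lambda_0 = j_\p^{\tz}(g\cdot H)$ is the jet of a nearby map of $\mathcal F$.

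The key step is the intertwining identity between the two actions. Writing $\Psi := d(j_\p^{\tz}\circ r_V)_H$ for the differential at $H$ of the jet map, the $G$-equivariance of the jet parametrization (the jet map carries $g\cdot H$ to $g\cdot\Lambda_0$, cf.\ \cref{cor:jetcontain} and \cite[Lemma 19]{dSLR15a}) gives $j_\p^{\tz}(g(t)\cdot H) = g(t)\cdot\Lambda_0$ for any curve $g(t)\in G'_\p$ with $g(0)=\id$. Differentiating at $t=0$ and setting $\xi = \dot g(0)$ yields
\[ \alpha_{\Lambda_0}(\xi) = \Psi(\beta_H(\xi)), \]
so that $\alpha_{\Lambda_0} = \Psi\circ\beta_H$; since $\beta_H(\mathfrak g) = \aut(H)$, we obtain $\alpha_{\Lambda_0}(\mathfrak g) = \Psi(\aut(H))$.

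It remains to show that $\Psi$ is injective on $\aut(H)$, and this is the step I expect to carry the real content. The argument I would use is that any $V\in\aut(H)\subset\hol(H)$ is an infinitesimal deformation, and by the linear jet parametrization (\cref{def:linjetparamgen} with $\ell=1$, reflected in the injectivity statement of \cref{rem:diagram}) such a $V$ is uniquely recovered from its $\tz$-jet $j_\p^{\tz}V = \Psi(V)$ at the points $p_1,\dots,p_m$ through $V = K(\cdot, j_\p^{\tz}V)$. Hence $\Psi(V)=0$ forces $V=0$, so $\Psi|_{\aut(H)}$ is injective and
\[ \dim(G'_\p\cdot\Lambda_0) = \rank\alpha_{\Lambda_0} = \dim\Psi(\aut(H)) = \dim\aut(H) = \ell_0, \]
as claimed. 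Note that this argument uses only the parametrization and equivariance properties; the hypothesis $\hol(H)=\aut(H)$ of \cref{t:rigid} is not needed here and enters only later, via \cref{l:manifold}, to recognize the orbit as a top-dimensional component of $A$.
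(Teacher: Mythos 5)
Your proof is correct, but it reaches the dimension count by a genuinely different route than the paper at the one step that carries the real content. Both arguments begin the same way: the orbit is immersed because the action is smooth, its dimension equals $\rank \alpha_{\Lambda_0}$, the equivariance of the jet map gives $\alpha_{\Lambda_0} = j_{\p}^{\tz}\circ \beta_H$, and the image of $\beta_H$ is $\aut(H)$ (the paper actually carries out the small chain-rule computation showing $\beta_H(X,X') = X'|_H + H_*(X)$, which you assert ``by definition''---worth writing out, but routine). The divergence is in proving that the jet map does not drop rank on $\aut(H)$. You argue directly at $H$: every element of $\aut(H)$ lies in $\hol(H)$, and by the jet parametrization for first-order infinitesimal deformations (\cref{def:linjetparamgen} with $\ell=1$, i.e.\ the injectivity recorded in \cref{rem:diagram}) an infinitesimal deformation is uniquely recovered from its $\tz$-jets at $p_1,\dots,p_m$, so $j_{\p}^{\tz}$ is injective on $\hol(H)$ and a fortiori on $\aut(H)$. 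The paper instead passes to a point $\tilde\Lambda \in G'_\p\cdot\Lambda_0$ arbitrarily close to $\Lambda_0$ at which the differential of $\Phi$ is injective (in the spirit of \cite[Lemma 23]{dSLR15a}), deduces that $j_{\p}^{\tz}$ is injective on $T_{\tilde H}(G\cdot H)$ as the inverse of $d\Phi$ there, computes $\rank\alpha_{\tilde\Lambda} = \dim\aut(\tilde H)$, and then transfers the count back to $\Lambda_0$ using the constancy of the stabilizer (hence orbit) dimension along the orbit. Your version is shorter and stays at the base point, at the price of invoking the infinitesimal jet parametrization (which the paper establishes anyway and uses in \cref{th:necessary}); the paper's version additionally exhibits $\Phi$ as a regular parametrization of $G\cdot H$ near generic points, a fact it exploits in the surrounding discussion. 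Your closing observation that the hypothesis $\hol(H)=\aut(H)$ is not used in this lemma is consistent with the paper's proof as well.
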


\begin{proof}
The statement that $G'_\p \cdot \Lambda_0$ is an immersed submanifold comes directly from the fact that the action of $G'_\p$ is smooth, see e.g. \cite[section 2.1]{DK}. Moreover we have the following: There exists $\tilde \Lambda \in G'_\p \cdot \Lambda_0$ arbitrarily close to $\Lambda_0$ such that the differential of $\Phi$ at $\tilde \Lambda$ is injective. This can be proved along the same lines as in \cite[Lemma 23]{dSLR15a}. It follows that in a neighborhood of $\tilde \Lambda$ in $G'_\p \cdot \Lambda_0$ the map $\Phi: G'_\p \cdot \Lambda_0 \rightarrow G \cdot H$ is a regular parametrization, i.e. a smooth map of maximal rank. Hence in a neighborhood of $\Phi(\tilde \Lambda)$ the set $G \cdot H$ is a submanifold of $(\C\{Z\})^{N'}$ of the same dimension as $G'_\p \cdot \Lambda_0$.\\
By \cite[Lemma 2.1.1]{DK} we have that for any $\Lambda \in J_{\p}^{\tz}$ the dimension of $G'_\p \cdot \Lambda$ is equal to the rank of $\alpha_{\Lambda}$. Since $a_{\Lambda_0} = j_{\p}^{\tz}\circ b_H$ we have $\alpha_{\Lambda_0} = j_{\p}^{\tz}\circ \beta_H$ (note that $j_{\p}^{\tz}: (\C\{Z\})^{N'} \rightarrow J_{\p}^{\tz}$ is linear).\\
Next we show that the image of $\beta_H$ is equal to $\aut(H)$, which implies that the rank of $\beta_H$ is $\ell_0$. Let $V=(X,X') \in \mathfrak g = \hol(M) \oplus \hol(M')$ and let $\Psi(t) = (\psi^{-1}(t), \psi'(t))$ be a smooth curve in $G$ with $\Psi(0) = \id$ and $\frac{d \Psi}{d t}(0) = (X,X')$. Then
\begin{align*}
	\beta_H(V) & = \frac{d}{d t} b_H(\Psi(t))|_{t=0} = \frac{d}{d t}( \psi'(t) \circ H \circ \psi^{-1}(t) )|_{t=0}\\
	& = \left\{\frac{d \psi'(t)}{d t}(H(\psi^{-1}(t))) + \psi'_*(t)\bigl(H(\psi^{-1}(t))\bigr) \cdot H_* \cdot  \left(\frac{d \psi^{-1} }{d t}(t)\right)\right\}\Big|_{t=0}  \\
	& = X'|_H + H_*(X),
\end{align*}
which shows that the image of $\beta_H$ agrees with $\aut(H)$.\\
Let $\tilde H = \Phi(\tilde \Lambda)$. The map $j_{\p}^{\tz}: T_{\tilde H} (G \cdot H) \rightarrow T_{\tilde \Lambda} (G'_\p \cdot \Lambda_0)$ is injective, since it is the inverse of the differential of $\Phi$ at $\tilde \Lambda$.\\
Since $\aut(\tilde H) \subset T_{\tilde H} (G \cdot H)$ and $j_{\p}^{\tz}$ is injective on $T_{\tilde H} (G \cdot H)$, we have
\begin{align*}
	 \rank({\alpha_{\tilde \Lambda}}) = \rank({j_{\p}^{\tz}\circ \beta_{\tilde H}}) = \rank({\beta_{\tilde H}}) = \dim (\aut(\tilde H)).
\end{align*}
Now, we have that $\dim (\aut(\tilde H)) = \ell_0$, because the dimension of the $G$-stabilizer is constant along orbits and thus the rank of $\beta_{H}$ is equal to the rank of $\beta_{\tilde H}$. Hence the rank of $\alpha_{\tilde \Lambda}$ is equal to $\ell_0$, which shows that $\dim (G'_\p \cdot \Lambda_0) = \ell_0$.
\end{proof}

We recall some notions from real-analytic geometry. Given a semi-analytic set $S \subset \R^m$, the \emph{regular set} $S_{reg}$ of $S$ is given by the collection of all $p \in S$ for which there exists a neighborhood $U \subset \R^m$ of $p$, such that $S \cap U$ is a real-analytic submanifold of $U$ of dimension $k(p) \in \N$. The dimension $d = \dim S$ of $S$ is the maximum of all $k(p)$ for $p \in S_{reg}$. We denote by $S_{reg}^d$ the set of all points $q \in S_{reg}$ such that $k(q) = d$ and define $S_{sing} = S \setminus S_{reg}^d$ as the \emph{singular set} of $S$. By \cite[section 17]{Lojasiewicz65} and \cite[section 7]{BM88} $S_{sing}$ is a semi-analytic subset of $S$ of dimension strictly less than $d$. Consequently we deduce the following observation:

\begin{remark}\label{r:intersect}
	Let $S$ be a semi-analytic set of dimension $d$, if $X$ is a semi-analytic subset or a smooth submanifold of dimension $d$ of $S$, then $X \cap S_{reg}^d \neq \emptyset$: Indeed suppose that $X \subseteq S_{sing}$, then $d = \dim X \leq \dim S_{sing} < \dim S = d$, which is not possible.
\end{remark}

\begin{proof}[Proof of \cref{t:rigid}]
Let $H \in \mathcal F$ and denote $\Lambda_0 = j_{\p}^{\tz}(r_V(H))$. Since $\Phi$ is a $G$-equivariant homeomorphism by \cref{cor:jetcontain}, $H$ is locally rigid if and only if there exists a $\hat \Lambda \in G'_\p \cdot \Lambda_0$ and a neighborhood $V$ of $\hat \Lambda$ in $J_{\p}^{\tz}$ such that $V \cap A = V \cap (G'_\p \cdot \Lambda_0)$. \\
%By \cite[Remark 12]{dSLR15a} $H$ is locally rigid if and only if for some (and hence for any) $\hat H \in G \cdot H$ there exists a neighborhood $\hat U$ of $\hat H$ in $(\C\{Z\})^{N'}$ such that $\hat U \cap \mathcal F = G \cdot H$. Since $\Phi$ is a $G$-equivariant homeomorphism by \cite[Lemma 10]{dSLR15b}, equivalently $H$ is locally rigid if and only if there exists a $\hat \Lambda \in G \cdot \Lambda_0$ and a neighborhood $V$ of $\hat \Lambda$ in $J_{\p}^{\tz}$ such that $V \cap A = V \cap (G \cdot \Lambda_0)$.\\
By \cref{l:dimension} there exists a neighborhood $U$ of $\Lambda_0$ such that the connected component $C$ of $\Lambda_0$ in $U \cap (G'_\p \cdot \Lambda_0)$ is a submanifold of dimension $\ell_0$.\\
We claim $A$ is an analytic subset of dimension $\ell_0$. Indeed, by \cref{l:manifold} we have that $\dim A \leq \ell_0$, and since $C \subseteq A$ it follows that $\ell_0 = \dim C \leq \dim A \leq \ell_0$.\\
Thus by \cref{r:intersect} $C \cap A^{\ell_0}_{reg} \neq \emptyset$. Let $\tilde \Lambda \in C \cap A^{\ell_0}_{reg}$. Since $\tilde \Lambda$ belongs to $A^{\ell_0}_{reg}$ there is a neighborhood $W$ such that $W \cap A$ is a submanifold of dimension $\ell_0$. On the other hand by shrinking $W$ we can assume that $W \cap C$ is a submanifold of dimension $\ell_0$. Moreover since $C \subset G'_\p \cdot \Lambda_0 \subset A$ we have that $W \cap C \subset W \cap A$. It follows that $W \cap C = W \cap A$ by \cite[p. 22, Prop. 2.8]{GMT86} or \cite[Prop. 7, p. 41]{Narasimhan66}. 
Then we have 
\begin{align*}
	W \cap (G'_\p \cdot \Lambda_0) \subset W \cap A = W \cap C \subset W \cap (G'_\p \cdot \Lambda_0),
\end{align*}
and hence $W \cap A = W \cap (G'_\p \cdot \Lambda_0)$, which is what we needed to prove.
\end{proof}

\begin{remark}
	In particular the proof of \cref{t:rigid} shows that for any map $H$ satisfying $\hol(H) = \aut(H)$ its orbit is a (locally closed) embedded submanifold of $\mathcal F$. 
\end{remark}

\section{A class of maps satisfying the GJPP}\label{sec:exampleGJPP}

We now turn to the definition of an important class of maps which satisfies \cref{def:jetparam}.

\begin{definition}\label{defNondeg}
Let $M'$ be a generic real-analytic submanifold.  Given a holomorphic map
 $H=(H_1,\ldots,H_{N'})\in \mathcal H (M,M')$, a point $p\in M$, a 
 defining function $\rho' =(\rho_1',\ldots, \rho'_{d'}) \in  (\mathbb C\{Z'-H(p),\zeta' - \overline{H(p)}\})^{d'}$  for $M'$ in a neighborhood of the point $q = H(p)\in M'$, and a fixed sequence $\iota = (\iota_1,\ldots,\iota_{N'})$ of 
 multiindices $\iota_m\in\N_0^n$ and $N'$-tuple of integers $\ell = (\ell^1,\ldots, \ell^{N'})$ with $1 \leq \ell^j \leq d'$, we consider the determinant
 \begin{equation} \label{folcon}
s^{\iota,\ell}_H(Z) = \det \left(\begin{array}{ccc} 
%r_1^1(H(Z),\overline H(\bar Z)) & \cdots & r^1_{N'}(H(Z),\overline H(\bar Z)) \\
 L^{\iota_1}\rho'_{\ell^1, Z_1'}(H(Z),\overline H(\bar Z)) & \cdots & L^{\iota_1}\rho'_{\ell^1,Z_{N'}'}(H(Z),\overline H(\bar Z)) \\  \vdots & \ddots & \vdots \\
 L^{\iota_{N'}}\rho'_{\ell^{N'}, Z_1'}(H(Z),\overline H(\bar Z)) & \cdots & L^{\iota_{N'}} \rho'_{\ell^{N'}, Z_{N'}'}(H(Z),\overline H(\bar Z))\end{array}\right).
\end{equation}

We define the open set $\mathcal F_{k}(p) \subset \mathcal H(M,M')$ as the set of maps $H$  for which there exists such a sequence of multiindices $\iota = (\iota_1,\ldots,\iota_{N'})$ satisfying $k = \max_{1 \leq m \leq N'}|\iota_m|$ and $N'$-tuple of integers $\ell = (\ell^1,\ldots, \ell^{N'})$ as above  such that $s_H^{\iota,\ell} (p)\neq 0$. We define $J_{k_0}$ as the set of all pairs $(\iota,\ell)$, where  $\iota = (\iota_1,\ldots,\iota_{N'})$ is a sequence of multiindices  with $k_0 = \max_{1\leq m \leq N'} |\iota_m|$ and $\ell = (\ell^1,\ldots, \ell^{N'})$ is as above.
We will say that $H$ with $H(M) \subset M'$ is {\em $k_0(p)$-nondegenerate} at $p$ if $k_0(p) = \min \{ k\colon H \in \mathcal{F}_k(p) \}$ is a finite number, and that $H$ is {\em $k_0$-nondegenerate} if $k_0=\max \{k_0(p)\colon p\in M\}$ is a finite number. We write $\mathcal{F}_{k_0}$ for the (open) 
subset of $\mathcal{H} (M,M')$ containing all $k_0$-nondegenerate maps.
\end{definition}

\begin{theorem}\label{jetparam} 
Let $M \subset \CN$, $M'\subset \CNp$  be generic real-analytic submanifolds with $M$ compact and minimal. Fix $k_0 \in \N$ and let $\mathbf t$ be the minimum integer, such that the Segre map $S^{\mathbf t}_p$ of order $\mathbf t$ associated to $M$  is generically of full rank for all $p\in M$. Then $\mathcal F_{k_0}$ satisfies \cref{def:jetparam} with $\tz = 2\mathbf t k_0$. \end{theorem}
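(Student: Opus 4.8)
The plan is to establish the GJPP for $\mathcal F_{k_0}$ by combining the classical finite-nondegeneracy jet parametrization (in the spirit of Lamel and \cite{BER97,LMZ08}) with a Segre-set iteration driven by minimality, then to patch the resulting local parametrizations using compactness of $M$, and finally to verify the extra structural features demanded by \cref{def:jetparam}: the rational form \eqref{rationalJetParam} of $\Phi_k$, the defining equations \eqref{defEquationJetParam}, and the curve-with-error statements. To begin, I would fix $H\in\mathcal F_{k_0}$ and a point $p\in M$. By \cref{defNondeg} there is a pair $(\iota,\ell)\in J_{k_0(p)}$ with $s_H^{\iota,\ell}(p)\neq 0$; since this determinant depends polynomially on the jet of $H$ at $p$, it stays nonzero on a neighbourhood $V$ of $p$ in $M$ and for all maps whose jet at $p$ is close to that of $H$. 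This openness furnishes the set $U_k$ and, after the Segre reduction below, a single polynomial $q_k$ on $J_{p_k}^{\tz}$ with $U_k=\{q_k\neq 0\}$.

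The core is the one-step reflection followed by iteration. Writing the basic identity $\rho'(H(Z),\overline{H(\chi)})=0$ on the complexification $\mathcal M$ and applying the CR vector fields $L^{\iota_m}$, $1\le m\le N'$, the chain rule produces an $N'\times N'$ linear system for the components of $\overline{H}$ whose coefficient matrix is exactly the one in \eqref{folcon}. Because its determinant $s_H^{\iota,\ell}$ is nonzero, Cramer's rule solves for $\overline{H}$ along the first Segre set as a rational expression in the jet, with denominator a power of $s^{\iota,\ell}$. Iterating this identity along the Segre sets and invoking the Baouendi--Ebenfelt--Rothschild minimality criterion \cite{BER96}, which guarantees that $S_p^{\mathbf t}$ is generically of full rank, the image of $S_p^{\mathbf t}$ covers a full neighbourhood of $p$ in $\C^N$, so that $H$ is recovered on all of $V_k$ from its jet. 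Tracking the number of CR derivatives spent in the $\mathbf t$ iterations, each Segre step consuming up to $k_0$ derivatives of both the map and its conjugate, yields the jet order $\tz=2\mathbf t k_0$. The outcome is a map $\Phi_k(Z,\Lambda)$ that is holomorphic in $Z$ and rational in $\Lambda$ with denominator a power of $q_k$, hence of the form \eqref{rationalJetParam}.

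It then remains to record the remaining data. The requirement that a jet $\Lambda$ actually be realized by a map in $\mathcal F$ sending $M$ into $M'$ translates, through the reproducing property, into the vanishing of the Taylor coefficients (in $Z,\bar Z$ along $M$) of $\rho'(\Phi_k(Z,\Lambda),\overline{\Phi_k(Z,\Lambda)})$; these coefficients are the real polynomials $c^k_i(\Lambda,\bar\Lambda)$ and give \eqref{defEquationJetParam}. For the curve version one runs the identical construction on a curve $\widetilde H(t)\in\parcur{r}$ staying in $\mathcal N$: the basic identity now holds modulo $O(t^{r+1})$, and since every operation used (differentiation, Cramer's rule, Segre iteration, Taylor expansion) is algebraic and preserves the order of vanishing in $t$, both the reproducing formula up to $O(t^{r+1})$ and the equations \eqref{defEquationJetParamcurves} follow with the asserted error. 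Finally, compactness of $M$ lets me extract a finite subcover $\{V_k\}$ with base points $p_k$ and data $(\iota_k,\ell_k)$, completing the verification of \cref{def:jetparam}.

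The hardest part, and the one I would treat in most detail since the rest is by now standard, is the interaction of the finite-nondegeneracy solve with the Segre iteration: showing that a single polynomial $q_k$ (rather than a growing product of distinct minors arising at successive Segre levels) can be taken to control all denominators and to remain holomorphic in $\Lambda$ after the conjugate slots are eliminated by the reflection, and pinning down the precise jet order $\tz=2\mathbf t k_0$. This requires that minimality genuinely propagate the single-point nondegeneracy condition through the iteration, and that the perturbation estimates be uniform enough for one neighbourhood $\mathcal N$ to work along an entire curve.
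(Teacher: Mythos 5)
Your overall strategy --- reflection identity via the nondegeneracy determinant, iteration along Segre sets under minimality, compactness of $M$ to patch finitely many local parametrizations --- is the same as the paper's, and most of the sketch would go through. However, there are two places where the argument as written has a gap, and they are precisely the two points the paper treats with care. First, the inversion of the Segre map: you argue that since $S_p^{\mathbf t}$ is generically of full rank its image covers a neighbourhood of $p$, ``so that $H$ is recovered on all of $V_k$ from its jet.'' Covering a neighbourhood set-theoretically is not enough; to pass from the identity $H\circ S=\varphi(\cdot,\Lambda)+O(t^{r+1})$ back to a holomorphic formula for $H$ you need a \emph{holomorphic right inverse} $T$ of the Segre map defined near the base point, which requires full rank \emph{at} that point rather than generically. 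The paper obtains this by doubling the Segre order: by Lemma 4.1.3 of \cite{BER99} the map $S_0^{2\mathbf t}$ is of maximal rank at $0$, the constant rank theorem yields $T$ with $A\circ T(A)=\mathrm{id}$ for $A$ near $S_0^{2\mathbf t}$, and one sets $\Phi(\cdot,\Lambda)=\varphi_{2\mathbf t}(T(S_0^{2\mathbf t}),\Lambda)$. This doubling is also the true source of the factor $2$ in $\tz=2\mathbf t k_0$: there are $2\mathbf t$ iterations, each costing $k_0$ derivatives as in \cref{lem:derivBasicIdentity}. Your accounting, which attributes the $2$ to spending derivatives ``of both the map and its conjugate'' over only $\mathbf t$ steps, lands on the right number for the wrong reason.

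Second, the curve statements. You assert that Cramer's rule and the Segre iteration ``preserve the order of vanishing in $t$.'' For the purely rational operations this is harmless, but the reflection step in the style of \cite{Lamel01} is an application of the implicit function theorem, and for a curve $\widetilde H(t)\in\parcur{r}$ that satisfies the basic identity only up to $O(t^{r+1})$ one must actually prove that the exact implicit solution, evaluated along the curve, approximates the curve to the same order. This is the content of \cref{prop:IFTplus}, proved by an induction on $t$-derivatives, and it is the nonstandard ingredient needed to upgrade Proposition 25 and Corollary 26 of \cite{Lamel01} to \cref{lem:derivBasicIdentity} and \cref{cor:iterationSegre}; it should not be dismissed as automatic. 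Both gaps are repairable along exactly these lines, after which your derivation of the denominators $q_k$ from the rational structure of the Segre iteration, of the equations \eqref{defEquationJetParam} from the Taylor coefficients of $\rho'\circ\Phi_k$, and the final compactness argument all match the paper.
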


In the next results, we will fix $j\in J_{k_0}$ and take $p=0$, so that all the functions and neighborhoods involved will implicitly depend on these choices. 

The following proposition is a version of the implicit function theorem in our setting.

\begin{proposition}\label{prop:IFTplus}
	Let $(x,y) \in \R^n \times \R^m, F: \R^n \times \R^m \rightarrow \R^m$ and let $r\geq 1$ be an integer. Suppose there is $(x_0,y_0)\in \R^n \times \R^m$ such that $F(x_0,y_0)=0$ and $F_y(x_0,y_0)$ is invertible. Let $U$ be a sufficiently small neighborhood of $x_0$ in $\R^n$ and $g: U \rightarrow \R^m$ be a function such that $F(x,g(x))=0$ for all $x \in U$. Denote by $(x(t),y(t)) \in \R^n \times \R^m$ a curve with $(x(0),y(0))=(x_0,y_0)$ which satisfies $F(x(t),y(t)) = O(t^r)$. Then $y(t) - g(x(t)) = O(t^r)$.
\end{proposition}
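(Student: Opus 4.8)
The plan is to prove this quantitative implicit function statement by a direct application of Taylor expansion combined with the invertibility hypothesis on $F_y$. The essential point is that $g(x(t))$ is the unique branch of solutions to $F(x,\cdot)=0$ passing through $(x_0,y_0)$, and $y(t)$ is an approximate solution to the same equation; I want to convert the smallness of $F(x(t),y(t))$ into smallness of the difference $y(t)-g(x(t))$.

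First I would set $u(t) = y(t) - g(x(t))$ and observe that $u(0) = y_0 - g(x_0) = 0$, since $g(x_0) = y_0$ by the standard implicit function theorem (which guarantees $g$ exists and is real-analytic near $x_0$, and is the unique such solution branch). Then I would expand $F(x(t), y(t))$ around the point $(x(t), g(x(t)))$, on which $F$ vanishes identically by the defining property of $g$. Writing
\[
F(x(t), y(t)) = F(x(t), g(x(t)) + u(t)) = F_y(x(t), g(x(t)))\, u(t) + R(t),
\]
where $R(t)$ collects the higher-order terms in $u(t)$, the Taylor remainder satisfies $R(t) = O(|u(t)|^2)$ uniformly for $t$ small, since $F$ is smooth and $x(t), g(x(t))$ stay in a compact neighborhood. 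By hypothesis the left-hand side is $O(t^r)$, and by continuity $F_y(x(t),g(x(t)))$ remains invertible for $t$ near $0$ with uniformly bounded inverse (shrinking $U$ if necessary). Hence
\[
u(t) = F_y(x(t), g(x(t)))^{-1}\bigl(F(x(t),y(t)) - R(t)\bigr) = O(t^r) + O(|u(t)|^2).
\]

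The remaining step is a bootstrapping argument to absorb the quadratic term. Since $u(t) \to 0$ as $t \to 0$, for $t$ small enough we have $|u(t)|^2 \leq \tfrac12 |u(t)|$ up to the bounded constants, so the $O(|u(t)|^2)$ term can be absorbed into the left-hand side, yielding $|u(t)| \leq C\,|t|^r$ for some constant $C$ and all $t$ sufficiently close to $0$. This establishes $y(t) - g(x(t)) = O(t^r)$, which is exactly the claim.

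The main obstacle I anticipate is being careful about the precise meaning of $O(t^r)$ in this context: the curves $(x(t),y(t))$ are only assumed to satisfy $F(x(t),y(t)) = O(t^r)$, not to be analytic or even differentiable to high order, so the argument must stay at the level of norm estimates rather than matching Taylor coefficients. One must ensure the constants in the Taylor remainder bound and in the bound on $\|F_y^{-1}\|$ are uniform over the relevant range of $t$, which is where shrinking $U$ (and correspondingly the range of $t$) is used. Once the estimates are made uniform, the absorption of the quadratic term is routine, so the whole argument is quite short; the delicacy lies entirely in setting up the uniform bounds correctly.
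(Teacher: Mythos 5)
Your proof is correct, but it takes a genuinely different route from the paper's. The paper sets $z(t)=y(t)-g(x(t))$ and $f(t)=F(x(t),y(t))-F(x(t),g(x(t)))=F(x(t),y(t))$, then differentiates $f$ repeatedly at $t=0$: at each order $k<r$ the chain rule produces $F_y(x_0,y_0)z^{(k)}(0)$ plus terms that cancel or vanish by the inductive hypothesis, so $z^{(k)}(0)=0$ for all $k<r$. This is an argument purely about Taylor coefficients (jets in $t$), which matches how the proposition is used downstream, where $O(t^{r})$ for curves in $\parcur{r}$ means vanishing of the coefficients up to order $r-1$; the price is that it needs the curve to be differentiable to order $r$. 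You instead expand $F$ to first order in $y$ around the exact solution branch, invert $F_y$ with a uniform bound, and absorb the quadratic remainder by a bootstrap once $u(t)\to 0$ — an argument at the level of norm estimates that only needs continuity of the curve and $C^2$ regularity of $F$, and is in that sense more elementary and slightly more general. The two conclusions agree for the smooth curves actually used in the paper, since for a $C^r$ function the bound $|z(t)|\leq C|t|^{r}$ is equivalent to the vanishing of the derivatives $z(0),\dots,z^{(r-1)}(0)$. Your one stated worry — that the curves might not be differentiable to high order — is actually resolved in the paper's favor by hypothesis (the curves there are analytic in $t$), but your estimate-based formulation handles it cleanly either way.
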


\begin{proof}
We write $h(t) \coloneqq g(x(t))$, denote $z(t) \coloneqq y(t)-g(x(t))$ and $f(t)\coloneqq F(x(t),y(t)) - F(x(t),h(t))$. Note that $f(t) = F(x(t),y(t)) = O(t^r)$. Taking the first derivative at $t=0$ we get $0=f'(0) = F_y(x_0,y_0) z'(0)$, hence $z'(0)=0$. Inductively for $k < r$ we obtain 
\begin{align*}
O(t^{r-k}) = f^{(k)}(t) = F_y(x(t),y(t)) y^{(k)}(t) - F_y(x(t),h(t)) h^{(k)}(t) + ...,
\end{align*}
where the dots $\ldots$ represent a suitable polynomial in the derivatives of $F$ and $x$ of order $\leq k$ and derivatives of $y$ of order strictly less than $k$,  minus the same polynomial with $h$ in place of $y$. Putting $t=0$ we obtain that 
\begin{align*}
	0 = f^{(k)}(0) = F_y(x_0,y_0) z^{(k)}(0),
\end{align*}
where all other terms vanish by the induction assumption, thus $z^{(k)}(0)=0$.
\end{proof}

The following results follow from Proposition 25 and Corollary 26 from \cite{Lamel01} using \cref{prop:IFTplus} instead of the implicit function theorem in \cite{Lamel01}; note that the proof of those results does not depend on the assumption that $H$ sends $0$ to a fixed point (cf. \cite{dSLR15b}).

\begin{lem}\label{lem:derivBasicIdentity}
For all $\ell \in \N$ there exists a holomorphic mapping $\Psi_{\ell}: \C^N \times \C^N \times \C^{K(k_0 + \ell) N'} \to \C^{N'}$ such that for every $H(t) \in \parcur{r}$ we have
\begin{align}
\label{derivBasicIdentity}
\partial^{\ell} H(Z,t) = \Psi_{\ell}(Z,\zeta, \partial^{k_0+ \ell} \bar H(\zeta,t))+O(t^{r+1}), 
\end{align}
for $(Z,\zeta)$ in a neighborhood of $0$ in $\mathcal M$, where $\partial^{\ell}$ denotes the collection of all derivatives up to order $\ell$. Furthermore there exist polynomials $P^{\ell}_{\alpha, \beta},Q_{\ell}$ and integers $e^{\ell}_{\alpha,\beta}$ such that 
\begin{align}
\label{derivBasicIdentityRational}
\Psi_{\ell}(Z,\zeta,W) = \sum_{\alpha,\beta \in \N_0^N} \frac{P^{\ell}_{\alpha,\beta}(W)}{Q_{\ell}^{e^{\ell}_{\alpha,\beta}}(W)} Z^{\alpha} \zeta^{\beta}.
\end{align}
\end{lem}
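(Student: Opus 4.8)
The plan is to reduce the statement to the exact reflection identity for a single finitely nondegenerate map --- this is the content of Proposition 25 and Corollary 26 of \cite{Lamel01} --- and then to transfer that identity to curves in $\parcur{r}$ by replacing the implicit function theorem used there with the approximate version \cref{prop:IFTplus}. As in the convention fixed before the lemma, I take $p=0$ and a witness $j=(\iota,\ell)\in J_{k_0}$ of $k_0$-nondegeneracy of $H_0=H(0)$, so that $s^{\iota,\ell}_{H_0}(0)\neq 0$. The starting point is the complexification of the tangency condition: if $H(t)\in\parcur{r}$, then by \cref{def:parcur} the identity $\rho'(H(Z,t),\overline{H(Z,t)})=O(t^{r+1})$ on $M$ complexifies to
\[
\rho'_\mu\bigl(H(Z,t),\bar H(\zeta,t)\bigr)=O(t^{r+1}),\qquad (Z,\zeta)\in\mathcal M,\ 1\le\mu\le d',
\]
which at $t=0$ becomes the genuine identity $\rho'_\mu(H_0(Z),\bar H_0(\zeta))=0$ on $\mathcal M$.

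First I would recall the construction of $\Psi_\ell$ for a genuine map. Applying the CR vector fields $L^{\iota_m}$ to the components of $\rho'$ prescribed by the witness $j$, one obtains $N'$ complexified equations whose Jacobian in the components of $H$ is precisely the matrix whose determinant appears in \eqref{folcon}; this determinant is $s^{\iota,\ell}_H$, which is nonzero near the jet of $H_0$ because $H_0\in\mathcal F_{k_0}$. The implicit function theorem then solves for $H=\Psi_0(Z,\zeta,\partial^{k_0}\bar H(\zeta))$, with $\Psi_0$ holomorphic and, by Cramer's rule, rational in the jet variable with a power of the determinant polynomial $Q_\ell$ in the denominator. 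Differentiating this order-zero identity up to $\ell$ times along $\mathcal M$ --- each derivation introducing at most one further derivative of $\bar H$ --- yields $\Psi_\ell$ and the identity $\partial^\ell H=\Psi_\ell(Z,\zeta,\partial^{k_0+\ell}\bar H)$, and expanding the holomorphic dependence on $(Z,\zeta)$ in powers $Z^\alpha\zeta^\beta$ gives the normal form \eqref{derivBasicIdentityRational}. It is crucial, and readily checked, that this construction uses only $\rho'$ near $H_0(p)$ and the nondegeneracy witness at $p$, never the normalization $H_0(p)=0$; this is the remark borrowed from \cite{dSLR15b}.

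To transfer this to $\parcur{r}$ I would apply \cref{prop:IFTplus} to the order-zero system. For $H(t)\in\parcur{r}$ the equations produced in the previous paragraph vanish only modulo $O(t^{r+1})$ on $\mathcal M$, so setting $y(t)=H(Z,t)$ and letting $x(t)$ collect $(Z,\zeta)$ together with $\partial^{k_0}\bar H(\zeta,t)$, the system takes the form $F(x(t),y(t))=O(t^{r+1})$ with $F_y$ invertible at the reference jet and exact solution $g=\Psi_0$. Then \cref{prop:IFTplus} (with its $r$ replaced by $r+1$) gives $H(Z,t)=\Psi_0(Z,\zeta,\partial^{k_0}\bar H(\zeta,t))+O(t^{r+1})$. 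Finally, differentiating this identity up to $\ell$ times along $\mathcal M$ --- which is harmless for the error term, since these derivations act in the $t$-independent variables $(Z,\zeta)$ --- reproduces the maps $\Psi_\ell$ of the exact case and yields \eqref{derivBasicIdentity} for every $\ell$.

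The main obstacle is to run \cref{prop:IFTplus} uniformly in the parameters $(Z,\zeta)\in\mathcal M$ and to be sure that the single holomorphic solution $g=\Psi_0$ built at $t=0$ controls the whole curve: one must keep the denominator $Q_\ell$ (a power of $s^{\iota,\ell}_{H_0}$) away from zero on a common neighborhood of the reference jet, and one must verify that the nonlinear chain-rule dependence does not degrade the $O(t^{r+1})$ order. This is exactly the feature that distinguishes \cref{prop:IFTplus} from the ordinary implicit function theorem --- it propagates the vanishing order through the implicit solution --- and it is what makes the passage from the exact identities of \cite{Lamel01} to their approximate counterparts along curves in $\parcur{r}$ go through essentially unchanged.
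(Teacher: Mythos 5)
Your proposal is correct and follows essentially the same route as the paper, which proves this lemma precisely by invoking Proposition 25 and Corollary 26 of Lamel's work with \cref{prop:IFTplus} substituted for the ordinary implicit function theorem, together with the observation that the construction does not require the normalization $H_0(p)=0$. Your fleshed-out version (complexify the $O(t^{r+1})$ tangency, apply the CR derivations prescribed by the nondegeneracy witness, solve the resulting system via \cref{prop:IFTplus} with $y(t)=H(Z,t)$, then differentiate along $\mathcal M$) is exactly the intended argument.
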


Next, we need  as usual  to evaluate the reflection identity along the Segre sets. The proof of the following corollary is very standard: it uses \cref{lem:derivBasicIdentity} (instead of the version with $t=0$) and we refer to \cite{BER99, JL13a}.

\begin{cor}\label{cor:iterationSegre}
For fixed $q \in \N$ with $q$ even there exists a holomorphic mapping $\varphi_{q}: \C^{qn} \times \C^{K(q k_0) N'}\to \C^{N'}$ such that for every $H(t) \in \parcur{r}$, we have
\begin{align}
\label{iterationSegre}
H(S^q_0(x^{[1;q]}),t) = \varphi_{q}(x^{[1;q]}, j_0^{q k_0} H(t))+O(t^{r+1}).
\end{align}
Furthermore there exist (holomorphic) polynomials $R^{q}_{\gamma},S_{q}$ and integers $m^q_{\gamma}$ such that 
\begin{equation}
\label{iterationSegreRational}
\varphi_{q}(x^{[1;q]},\Lambda) =  \sum_{\gamma \in \N_0^{qn}} \frac{R^{q}_{\gamma}( \Lambda)}{S_{q}^{m^{q}_{\gamma}}(\Lambda)} (x^{[1;q]})^{\gamma}  
%\\
%\sum_{\gamma \in \N_0^{qn}} \frac{R^{q}_{\gamma}( \bar \Lambda)}{S_{q}^{m^{q}_{\gamma}}(\bar \Lambda)} (x^{[1;q]})^{\gamma} & q \text{ odd }.\end{cases}
\end{equation}

\end{cor}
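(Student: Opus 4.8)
The plan is to establish \eqref{iterationSegre} by induction on the even integer $q$, iterating the basic reflection identity \eqref{derivBasicIdentity} of \cref{lem:derivBasicIdentity} along the Segre sets. The point that makes the iteration work is that the recursion $S^q_0(x^{[1;q]}) = (x_1, Q(x_1, \overline S^{q-1}_0(x^{[2;q]})))$ is built precisely so that the pair $(Z,\zeta) = (S^q_0(x^{[1;q]}), \overline S^{q-1}_0(x^{[2;q]}))$ lies on the complexification $\mathcal M$; hence \eqref{derivBasicIdentity} may be applied with this substitution. Accordingly, I would prove the stronger statement carrying all derivatives: for every $\ell \in \N$ there is a holomorphic map $\varphi_{q,\ell}$, rational in its jet argument with denominator a power of a single polynomial, such that $\partial^\ell H(S^q_0(x^{[1;q]}),t) = \varphi_{q,\ell}(x^{[1;q]}, j_0^{qk_0+\ell}H(t)) + O(t^{r+1})$ when $q$ is even, together with the conjugate statement (with $\bar H$ and the conjugate jet in place of $H$ and $j_0^{qk_0+\ell}H(t)$) when $q$ is odd. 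Here $\partial^\ell$ denotes, as in \cref{lem:derivBasicIdentity}, the collection of all derivatives up to order $\ell$. The corollary is then the case $q$ even, $\ell=0$.

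For the inductive step I would first substitute $Z = S^q_0(x^{[1;q]})$ and $\zeta = \overline S^{q-1}_0(x^{[2;q]})$ into \eqref{derivBasicIdentity}, obtaining $\partial^\ell H(S^q_0(x^{[1;q]}),t) = \Psi_\ell(S^q_0(x^{[1;q]}), \overline S^{q-1}_0(x^{[2;q]}), \partial^{k_0+\ell}\bar H(\overline S^{q-1}_0(x^{[2;q]}),t)) + O(t^{r+1})$. The remaining ingredient $\partial^{k_0+\ell}\bar H(\overline S^{q-1}_0(x^{[2;q]}),t)$ is then rewritten by the conjugate of the inductive hypothesis at level $q-1$ (that is, the conjugate basic identity, obtained by conjugating \eqref{derivBasicIdentity}), which expresses it as a holomorphic function of $x^{[2;q]}$ and the conjugate jet $\overline{j_0^{qk_0+\ell}H}(t)$ up to $O(t^{r+1})$. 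Substituting back, and using that $S^q_0$ and $\overline S^{q-1}_0$ are themselves convergent power series in $x^{[1;q]}$, yields the asserted form at level $q$, with the consumed jet order increased by $k_0$ at each stage. Iterating $q$ times reduces to the $0$-th Segre set, i.e.\ to evaluation at the origin, where the derivatives of $H$ are literally the entries of $j_0^{qk_0}H(t)$; this is the (trivial) base case. The requirement that $q$ be even is exactly what forces the chain of conjugations to terminate on $H$ (and on $j_0^{qk_0}H(t)$) rather than on $\bar H$, matching the right-hand side of \eqref{iterationSegre}, while the derivative count shows that the $qk_0$-jet is precisely what is needed.

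The rational normal form \eqref{iterationSegreRational} then follows from the composition. By \eqref{derivBasicIdentityRational} each $\Psi_\ell$ is rational in its jet variable, with denominator a power of the single polynomial $Q_\ell$; a finite composition of such maps is again rational with denominator a power of one polynomial $S_q$, which one takes to be a suitable product of the pulled-back $Q_\ell$'s. Expanding the resulting holomorphic map in powers of $x^{[1;q]}$ (convergent, since the $S^q_0$ and the $\Psi_\ell$ are holomorphic) produces the coefficients $R^q_\gamma/S_q^{m^q_\gamma}$ of \eqref{iterationSegreRational}.

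The one genuinely new point, compared with the classical case $t=0$ treated in \cite{BER99, JL13a}, is to verify that the order-$r$ contact survives every substitution, since at each stage we feed in a curve that agrees with a true solution only up to $O(t^{r+1})$; this is where the main care is needed. The underlying mechanism is elementary: if an argument has the form ``value $+\,O(t^{r+1})$'' and is composed with a map that is holomorphic, hence locally Lipschitz, on a neighbourhood of the value, then the composite again differs from ``map of the value'' by $O(t^{r+1})$. The only thing to check is that the intermediate jets stay, for $t$ near $0$, inside the open set where the relevant denominators $Q_\ell$ (and hence $S_q$) do not vanish; this holds by continuity, since at $t=0$ the jets coincide with those of the reference map, for which these denominators reduce to the nonvanishing determinants expressing its $k_0$-nondegeneracy (\cref{defNondeg}). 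Keeping careful track of the conjugations, the parity of $q$, and the growth of the derivative orders through the $q$ successive substitutions is the principal bookkeeping obstacle; the analytic content is carried entirely by \cref{lem:derivBasicIdentity}.
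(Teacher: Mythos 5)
Your overall route is the paper's own: the paper gives no separate argument for this corollary beyond observing that it follows by iterating \cref{lem:derivBasicIdentity} along the Segre sets as in \cite{BER99, JL13a}, and you have correctly isolated the only genuinely new point, namely that the $O(t^{r+1})$ contact survives each substitution because the $\Psi_\ell$ are holomorphic (hence locally Lipschitz) where the denominators $Q_\ell$ do not vanish, which holds near the jets of a $k_0$-nondegenerate reference map. The derivation of the rational form \eqref{iterationSegreRational} by composing the expansions \eqref{derivBasicIdentityRational} is also fine.

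However, your inductive statement at the odd levels is mis-conjugated, and as literally written the induction does not close. You assert that for $q-1$ odd the quantity $\partial^{k_0+\ell}\bar H(\overline S^{q-1}_0(x^{[2;q]}),t)$ is a holomorphic function of the \emph{conjugate} jet $\overline{j_0^{qk_0+\ell}H}(t)$. It is not: a single application of the reflection identity already flips you back to data of $H$ itself. Concretely, the conjugate of \eqref{derivBasicIdentity} reads $\partial^{\ell}\bar H(\zeta,t)=\overline\Psi_{\ell}(\zeta,Z,\partial^{k_0+\ell} H(Z,t))+O(t^{r+1})$ for $(Z,\zeta)\in\mathcal M$; its jet slot carries derivatives of $H$, not of $\bar H$. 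Hence already at level $q=1$ one has $\partial^{\ell}\bar H((x,0),t)=\overline\Psi_{\ell}\bigl((x,0),0,j_0^{k_0+\ell}H(t)\bigr)+O(t^{r+1})$, a holomorphic function of the straight jet, and every level $\geq 1$, odd or even, depends holomorphically on $j_0^{\,\cdot}H(t)$. If one instead feeds your version (dependence on $\overline{j_0^{qk_0+\ell}H}(t)$) into the holomorphic map $\Psi_\ell$ at the even step, the output depends antiholomorphically on the jet, contradicting \eqref{iterationSegre}. The parity of $q$ only decides whether the left-hand side is $H\circ S^q_0$ (holomorphic dependence on $\Lambda$) or $\bar H\circ\overline S^q_0$; it is not that a chain of conjugations on the jet slot needs an even number of steps to cancel. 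This is a repairable bookkeeping slip rather than a wrong approach: restate the odd-level hypothesis with the straight jet and your induction goes through verbatim.
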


\begin{proof}[Proof of \cref{jetparam}]
By the choice of $\mathbf t\leq d+1$, the Segre map $S^{\mathbf t}_p$ is generically of maximal rank at any point $p\in M$. Fix any $p\in M$ and choose coordinates such that $p\cong 0$. By Lemma 4.1.3 in \cite{BER99}, the Segre map $S^{2\mathbf t}_0$ is of maximal rank at $0$. Using the constant rank theorem, there exists a neighborhood $\mathcal V$ of $S^{2\mathbf t}_0$ in $(\mathbb C\{x^{[1;2\mathbf t]}\})^N$ and a map $T:\mathcal V\to (\C\{Z\})^{2\mathbf t n}$ such that $A\circ T(A)=Id$ for all $A\in \mathcal V$.

%Arguing as in \cite[Proposition 4.1.18]{BER99}, 

We now define the holomorphic map
%there exist a neighborhood $\mathcal V$ of $S^{2\mathbf t}_0$ in $(\mathbb C\{x^{[1;2\mathbf t]}\})^N$ and a holomorphic map 
\[\phi:\mathcal V\times (\mathbb C\{x^{[1;2\mathbf t]}\})^{N'} \to (\mathbb C\{Z\})^{N'}\]
as 
\begin{equation}\label{defphi}\phi(A,\psi)=\psi(T(A)).
\end{equation}
 Thus we have  that $\phi(A,h\circ A) = h(A(T(A)))=h$ for all $A\in \mathcal V$ for all $h\in(\mathbb C\{Z\})^{N'}$. 
We define $\widetilde \Phi(\cdot, \Lambda)=\phi(S_0^{2\mathbf t},\varphi_{{2\mathbf t}}(\cdot, \Lambda))$. Now, if we apply $\phi(S_0^{2\mathbf t},\cdot)$ to both sides of equation \eqref{iterationSegre} with $q=2\mathbf t$ we get
\begin{equation}\label{reproducing}
H(t)= \phi(S_0^{2\mathbf t},H(t)\circ S^{2\mathbf t}_0)=\phi(S_0^{2\mathbf t},\varphi_{{2\mathbf t}}(\cdot, j_0^{{2\mathbf t k_0}} H(t)) + O(t^{r+1}))= \widetilde \Phi(\cdot,  j_0^{{2\mathbf t k_0}} H(t)))+O(t^{r+1}) .
\end{equation}

Recall now that \cref{cor:iterationSegre} is based on a fixed choice of $j\in J$ and $p\in M$, $p\cong 0$. In what follows, instead, we write explicitly the dependence of the objects on $j$ and $p$. By setting $q_{j,p}(\Lambda,\bar \Lambda) = S_{2\mathbf t,j,p}(\Lambda)$, where $S_{2\mathbf t,j,p}$ is given in \eqref{iterationSegreRational}, a direct computation using \eqref{iterationSegreRational} and \eqref{defphi} allows to derive the expansion in \eqref{rationalJetParam}.
Let $\mathcal U_{j,p}$ be a neighborhood of $\{0\}\times U_{j,p}$ in $\mathbb C^N$ such that $\widetilde\Phi_{j,p}$ is convergent on $\mathcal U_{j,p}$.

Consider now $\widetilde H(t)$ as in \cref{def:jetparam}, let $j_p^{2\mathbf t k_0} \widetilde H(t)$ be its $2\mathbf t k_0$-jet at $p\in M$, and for all $p\in M$ choose $j(p)\in J$ such that $(p,j_p^{2\mathbf t k_0} \widetilde H(t))\in \mathcal U_{j(p),p}$ for all $t$ close enough to $0$. Let $\Omega'_p$ be a neighborhood of $p$ in $\mathbb C^n$ and $\Omega''_p$ be a neighborhood of $j_p^{2\mathbf t k_0} \widetilde H(0)$ in $J_p^{2\mathbf t k_0}$ such that $\widetilde \Phi_{j(p),p}$ is defined on $\Omega'_p\times \Omega''_p$. Since $M$ is compact we can select finitely many points $p_1,\ldots,p_m$ such that $M\subset \cup_{1\leq k\leq m} \Omega'_{p_k}$. We define $\Phi_{k}=\widetilde \Phi_{j(p_k),p_k}$. The derivation of \eqref{defEquationJetParam} is now an application of standard arguments. In order to prove \eqref{defEquationJetParamcurves}, let $\rho', Z(s)$ be as in \cref{def:parcur}. By assumption
\[\rho'(\widetilde{H}(Z(s),t))=O(t^{r+1});\]
on the other hand, for all $1\leq k\leq m$ we have
\[\widetilde H(\cdot,t)=\Phi_k(\cdot, j_{p_k}^{2 \mathbf t k_0}\widetilde H(t))+O(t^{r+1})\]
so that
\[\rho'(\Phi_k(Z(s), j_{\p}^{2\mathbf t k_0}\widetilde H(t)))=\rho'(\widetilde H(Z(s),t)+O(t^{r+1}))=\rho'(\widetilde H(Z(s),t))+O(t^{r+1})=O(t^{r+1}).\]
Developing the previous equation as a power series in $s$, we get a collection of function $c^k_i$ (the same as in \eqref{eq:higherOrderInfDef}), such that, putting $\widetilde \Lambda(t)=j_{\p}^{2\mathbf t k_0}\widetilde H(t)$, we have $c^{k}_i (\widetilde \Lambda(t), \bar{\widetilde \Lambda}(t))=O(t^{r+1})$, as desired.  
\end{proof}

\section{Examples}\label{sec:examples}

For later reference we list all infinitesimal automorphisms of $\Sphere{2} = \{(z,w) \in \C^2: |z|^2 + |w|^2 = 1\}$, which are given as follows:
\begin{align*}
S_1 & = (\alpha  - \bar \alpha z^2 - \bar \beta z w ) \frac{\partial }{\partial z} + (\beta - \bar \alpha z w - \bar \beta w^2) \frac{\partial }{\partial w}\\
S_2 & = -\bar \gamma w \frac{\partial }{\partial z} + \gamma z \frac{\partial }{\partial w}\\
S_3 & = i s z \frac{\partial }{\partial z} + i t w \frac{\partial }{\partial w},
\end{align*}
where $\alpha, \beta,\gamma \in \C, s,t\in \R$.

\begin{definition}
	For a map $H: M \rightarrow M'$ the \textit{infinitesimal stabilizer} of $H$ is given by $(S,S') \in \hol(M) \times \hol(M')$ such that $H_*(S) = - S'|_{H(M)}$. By an abuse of notation we say that $S \in \hol (M)$ belongs to the infinitesimal stabilizer of $H$ if there exists $S' \in \hol(M')$ such that $H_*(S) = - S'|_{H(M)}$.
\end{definition}

\begin{example}
	Let $H: \Sphere{2} \rightarrow \Sphere{3} = \{(z_1,z_2,z_3) \in \C^3: |z_1|^2+|z_2|^2+|z_3|^2 = 1\}$ be given by $H(z,w) = (z^2, \sqrt{2} z w, w^2)$, which is $2$-nondegenerate in $\Sphere{2}$ and whose infinitesimal stabilizer is given by $S_2$ and $S_3$ belonging to $\hol(\Sphere{2})$. It holds that $\dim_{\R}\hol(H) = 27$ and a set of generators of a complement of $\aut(H)$ in $\hol(H)$ is given by:
	\begin{align}
	\label{eq:infDefExHomNonTrivial}
	Y = \left(\begin{array}{ccc}
	a w& - \bar a z^3  & - \bar a z^2 w \\
	- \bar b z^2 w & b z - \bar b z w^2&  0\\
	-\bar c z w^2  & -\bar c w^3 &  c z \\
	0 & d w - \bar d z^2 w & - \bar d z w^2
	\end{array} \right)
	\left(\begin{array}{c}
	\frac{\partial}{\partial z_1} \\
	\frac{\partial}{\sqrt{2}\partial z_2} \\
	\frac{\partial}{\partial z_3} 
	\end{array} \right),
	\end{align}
	where $a,b,c,d \in \C$. We would like to refer to the example given in \cite[section 8]{dSLR15a} in the case of germs at $0$ of the corresponding hypersurfaces and maps in normal coordinates, which also lists $8$ infinitesimal deformations which do not originate from trivial infinitesimal deformations and were found with a different approach.\\
	To find all elements in $\hol(H)$ given in \eqref{eq:infDefExHomNonTrivial} we proceed as follows: An infinitesimal deformation $X=(X_1,X_2,X_3) \in \hol(H)$ has to satisfy the following equation:
	\begin{align}
	\label{eq:infDefExHom}
	\real (X(z,w) \cdot \bar H(\bar z,\bar w)) = 0, \qquad (z,w) \in \Sphere{2}.
	\end{align}
	We use the techniques developed in \cite[section II]{DAngelo91} and \cite[section 5.1.4, Theorem 4]{DAngeloBook} for studying mappings of spheres.
	Consider the homogeneous expansion of $X = \sum_{k\geq 0} X^k$, where $X^k=(X_1^k,X_2^k,X_3^k)$ and each $X_j^k$ is a homogeneous polynomial of order $k$. Putting this into \eqref{eq:infDefExHom} and introducing $(z,w) \mapsto e^{i \theta}(z,w)$ for $\theta \in \R$ one obtains for $(z,w) \in \Sphere{2}$:
	\begin{align}
	\label{eq:infDefExHom2}
	\sum_{k \geq 0} (X^k(z,w) \cdot \bar H(\bar z, \bar w)) e^{i(k-2) \theta} + \sum_{\ell \geq 0} (\bar X^\ell(\bar z,\bar w) \cdot H(z, w)) e^{i(2-\ell) \theta} = 0.
	\end{align}
	It follows that the coefficients of $e^{i m \theta}$ for $m \in \Z$ of the left-hand side of the above equation have to vanish. By the reality of \eqref{eq:infDefExHom2} one needs to consider $m \geq 0$. For $m \geq 3$ one has
	\begin{align}
	\label{eq:infDefExHom3}
	X^{m+2}(z,w) \cdot \bar H(\bar z,\bar w) = 0,
	%X_1^{m+2}(z,w)  \bar z^2 + 	X_2^{m+2}(z,w)  \sqrt{2} \bar z \bar w + 	X_3^{m+2}(z,w)  \bar w^2  = 0,
	\end{align}
	for $(z,w) \in \Sphere{2}$. Since the expression on the left-hand side of \eqref{eq:infDefExHom3} is homogeneous, \eqref{eq:infDefExHom3} also holds for $(z,w) \in \C^2$, which implies that $X^{m+2} \equiv 0$ for $m \geq 3$. Next, consider $m\in \{0,1,2\}$ to obtain the following equations for $(z,w)\in\Sphere{2}$:
	\begin{align*}
	X^{4-m}(z,w) \cdot \bar H(\bar z,\bar w) + \bar X^m(\bar z,\bar w) \cdot H( z,w) = 0.
	%X^4(z,w) \cdot \bar H(\bar z,\bar w) + \bar X^0(\bar z,\bar w) \cdot H( z,w) = 0,\\
	%X^3(z,w) \cdot \bar H(\bar z,\bar w) + \bar X^1(\bar z,\bar w) \cdot H( z,w) = 0,\\
	%X^2(z,w) \cdot \bar H(\bar z,\bar w) + \bar X^2(\bar z,\bar w) \cdot H( z,w) = 0,
	%X_1^{4}(z,w)  \bar z^2 + 	X_2^{4}(z,w)  \sqrt{2} \bar z \bar w + 	X_3^{4}(z,w)  \bar w^2  + \bar X_1^{0}(\bar z,\bar w)  z^2 + 	\bar X_2^{0}(\bar z,\bar w)  \sqrt{2} z  w + \bar X_3^{0}(\bar z,\bar w)   w^2 = 0,
	\end{align*}
	Homogenizing these equations by multiplying each of its second expression with $(|z|^2+|w|^2)^{2-m}$ the above equations become:
	\begin{align}
	\label{eq:infDefExHom4}
	X^{4-m}(z,w) \cdot \bar H(\bar z,\bar w) + \bar X^m(\bar z,\bar w) \cdot H( z,w) (|z|^2+|w|^2)^{2-m} = 0,
	\end{align}
	for all $(z,w) \in \C^2$. Solving the system \eqref{eq:infDefExHom4} by comparing coefficients of $(\bar z,\bar w)$ and $(z,w)$ one obtains all infinitesimal deformations of $H$; the nontrivial ones are as in \eqref{eq:infDefExHomNonTrivial}.
\end{example}

With a similar approach one can compute infinitesimal deformations in the following cases:

\begin{example}
	Let $H: \Sphere{2} \rightarrow \Sphere{3}$ be given by $H(z,w) = (z^3, \sqrt{3} z w, w^3)$ which is $3$-nondegenerate in $\Sphere{2}$. Then $\hol(H) = \aut(H)$ and the infinitesimal stabilizer of $H$ is given by $S_3 \in \hol(\Sphere{2})$.
\end{example}

\begin{example}
	Let $M=\{(z,w)\in \C^2: |z|^4 + |z|^2 |w|^4 + |w|^2 = 1\}$ and $H: M \rightarrow \Sphere{3}$ be given by $H(z,w) = (z^2, z w^2, w)$ which is $3$-nondegenerate in $M$. Then $\hol(H) = \aut(H)$ and the infinitesimal stabilizer of $H$ agrees with $\hol(M) = \{S_3\}$.
\end{example}

\bibliography{ref}

\end{document}